\documentclass[12pt]{amsart}
   \textwidth=6.5in \oddsidemargin=0in \evensidemargin=0in
   \topmargin=0in \textheight=8.5in

\usepackage{amsmath}
\usepackage{amssymb}
\usepackage{amsfonts} 
\usepackage{mathrsfs}
\usepackage{mathtools}

   \newcommand{\Cdb}{\mbox{$\mathbb{C}$}}

   \newcommand{\Ndb}{\mbox{$\mathbb{N}$}}

   \newcommand{\Rdb}{\mbox{$\mathbb{R}$}}
   
   \newcommand{\Tdb}{\mbox{$\mathbb{T}$}}

   \newcommand{\D}{\mbox{${\mathcal D}$}}

   \renewcommand{\H}{\mbox{${\mathcal H}$}}

    \newcommand{\M}{\mbox{${\mathcal M}$}}

\newcommand{\norm}[1]{\Vert#1\Vert}
\newcommand{\bignorm}[1]{\bigl\Vert#1\bigr\Vert}
\newcommand{\Bignorm}[1]{\Bigl\Vert#1\Bigr\Vert}

\newtheorem{theorem}{Theorem}[section]
\newtheorem{lemma}[theorem]{Lemma}
\newtheorem{corollary}[theorem]{Corollary}
\newtheorem{proposition}[theorem]{Proposition}
\newtheorem{definition}[theorem]{Definition}
\theoremstyle{remark}
\newtheorem{remark}[theorem]{\bf Remark}
\theoremstyle{definition}

\numberwithin{equation}{section}

\author[C. Arhancet]{C\'edric Arhancet}
\email{cedric.arhancet@protonmail.com}
\address{6 rue Didier Daurat, 81000 Albi, France}
\author[C. Kriegler]{Christoph Kriegler}
\email{christoph.kriegler@uca.fr}
\address{Universit\'e Clermont Auvergne, 
CNRS, LMBP, F-63000 Clermont-Ferrand, France}
\author[C. Le Merdy]{Christian Le Merdy}
\email{clemerdy@univ-fcomte.fr}
\address{Laboratoire de Math\'ematiques de Besan\c con, UMR 6623,
CNRS, Universit\'e Bourgogne Franche-Comt\'e,
25030 Besan\c{c}on Cedex, France}
\author[S. Zadeh]{Safoura Zadeh}
\email{jsafoora@gmail.com}
\address{Universit\'e Clermont Auvergne, CNRS, LMBP, F-63000 Clermont-Ferrand, France}
\begin{document}

\title{Separating Fourier and Schur multipliers}

\maketitle

\begin{abstract} Let $G$ be a locally compact unimodular group, let $1\leq p<\infty$,
let $\phi\in L^\infty(G)$ and assume that the Fourier multiplier $M_\phi$
associated with $\phi$ is bounded on the noncommutative $L^p$-space $L^p(VN(G))$.
Then $M_\phi\colon L^p(VN(G))\to L^p(VN(G))$ is separating (that is,
$\{a^*b=ab^*=0\}\Rightarrow\{M_\phi(a)^* M_\phi(b)=M_\phi(a)M_\phi(b)^*=0\}$
for any $a,b\in L^p(VN(G))$) if and only if  there
exists $c\in\Cdb$ and a continuous
character $\psi\colon G\to\Cdb$ such that $\phi=c\psi$ 
locally almost everywhere. This provides a characterization of isometric
Fourier multipliers on $L^p(VN(G))$, when $p\not=2$. Next, let 
$\Omega$ be a $\sigma$-finite measure space, let $\phi\in L^\infty(\Omega^2)$
and assume that the Schur multiplier 
associated with $\phi$ is bounded on the 
Schatten space $S^p(L^2(\Omega))$. We prove that this 
multiplier is separating 
if and only if 
there exist a constant $c\in\Cdb$ and two unitaries 
$\alpha,\beta\in L^\infty(\Omega)$ such that 
$\phi(s,t) =c\, \alpha(s)\beta(t)$ a.e. on 
$\Omega^2.$ This provides a
characterization of isometric Schur multipliers
on $S^p(L^2(\Omega))$, when $p\not=2$.
\end{abstract}

\vskip 0.8cm
\noindent
{\it 2020 Mathematics Subject Classification:} Primary 46L51, secondary	43A15, 46B04.

\smallskip
\noindent
{\it Key words:} Fourier multipliers, Schur multipliers, 
noncommutative $L^p$-spaces, isometries.

\bigskip
\section{Introduction} 
Let $\Gamma$ be a locally compact abelian group,  let 
$1\leq p\not=2<\infty$ and let $T\colon L^p(\Gamma)\to L^p(\Gamma)$ 
be a bounded Fourier multiplier.
A classical theorem going back to
Parrott \cite{Par} and Strichartz \cite{St} asserts that
$T$ is an isometry if and only there exists 
$c\in\Cdb$, with $\vert c\vert=1$, and $u\in \Gamma$
such that $T=c\tau_u$. Here $\tau_u\colon L^p(\Gamma)\to L^p(\Gamma)$ 
is the translation operator defined by $\tau_u(f)
=f(\cdotp - u)$.

In the last decade, Fourier multipliers on noncommutative 
$L^p$-spaces associated with group von Neumann algebras emerged as
a major topic in noncommutative analysis, with applications
to approximation properties of operator algebras, to singular 
integrals and Calderon-Zygmund operators, as well
as to noncommutative probability and quantum information. See in particular
\cite{Arh23, GJP, JMP1, JMP2, LD, MR, PRD}. It therefore 
became a natural issue to understand the structure 
of isometric Fourier multipliers in the noncommutative 
framework.
Indeed, the original motivation for this work was to extend the 
Parrott-Strichartz theorem to this setting. 

To be more specific, let $G$ be a locally compact group, let 
$VN(G)$ denote its group von Neumann algebra and
let $\lambda\colon L^1(G)\to VN(G)$ be the contractive
representation associated with the left regular representation of $G$.
Assume that 
$G$ is unimodular. This ensures that the Plancherel 
weight $\tau_G$ on $VN(G)$ is actually a normal semifinite faithful trace.
For any $1\leq p<\infty$, let $L^p(VN(G))$ be the noncommutative 
$L^p$-space associated with $(VN(G),\tau_G)$. A Fourier multiplier
$T\colon L^p(VN(G))\to L^p(VN(G))$ is an operator of the form
$$
T(\lambda(f))=\lambda(\phi f),
$$
where $\phi$ is a fixed element of $L^\infty(G)$ and $f$ 
lies in a suitable
dense subspace of $L^1(G)$. We set $T=T_\phi$
in this case. See the beginning of Section 3 for more details.

We generalize the Parrott-Strichartz theorem by showing the 
following result, in which $VN(G)$ plays the role of 
$\Gamma$ and $\Tdb=\{z\in\Cdb\, :\,\vert z\vert=1\}$:
If $p\not=2$, a Fourier 
multiplier $T_\phi\colon L^p(VN(G))\to L^p(VN(G))$ is
an isometry if and only if there
exists $c\in\Tdb$ and a continuous
character $\psi\colon G\to\Tdb$ such that $\phi=c\psi$ 
locally almost everywhere.

We actually consider the more general class of separating 
Fourier multipliers. Following \cite{LZ}, we say that an 
operator 
$T\colon L^p(M)\to L^p(M)$ acting on some
noncommutative $L^p$-space $L^p(M)$ is separating if
for any disjoint $a,b \in L^p(M)$ (that is, $a^*b=ab^*=0$), 
the images $T(a),T(b)$ are disjoint as well. It is 
well-known that if $p\not=2$, any isometry on
$L^p(M)\to L^p(M)$ is separating. This follows from
Yeadon's characterization of isometries on 
noncommutative $L^p$ spaces \cite{Yeadon} (see also \cite{LZ}).
We prove that for any $1\leq p<\infty$ (including the
case $p=2$), a Fourier 
multiplier $T_\phi$ on $L^p(VN(G))$ is
separating if and only if there
exists $c\in\Cdb$ and a continuous
character $\psi\colon G\to\Tdb$ such that $\phi=c\psi$ 
locally almost everywhere. 

The above two characterizations theorems are established in Section 3.
Section 4 provides complements on 
Fourier multipliers.

Section 5 is devoted to Schur multipliers acting on 
Schatten classes. Let $(\Omega,\mu)$ be a $\sigma$-finite
measure space, let $\phi\in L^\infty(\Omega^2)$
and let $T_\phi$ denote the associated Schur multiplier
acting on the Hilbert-Schmidt space $S^2(L^2(\Omega))$
(see below for details). Let $1\leq p<\infty$ and
assume that $T_\phi$ is bounded on the Schatten space
$S^p(L^2(\Omega))$. We show that 
$T_\phi\colon S^p(L^2(\Omega))\to S^p(L^2(\Omega))$
is separating if and only if 
there exist a constant $c\in\Cdb$ and two unitaries 
$\alpha,\beta\in L^\infty(\Omega)$ such that 
$\phi(s,t) =c\, \alpha(s)\beta(t)$ a.e. on 
$\Omega^2.$ In the case when $p\not=2$, this provides a
characterization of isometric Schur multipliers
on $S^p(L^2(\Omega))$.

\medskip
\section{Preliminaries on separating maps}
Let $\M$ be a semifinite von Neumann algebra equipped 
with a normal semifinite faithful trace $\tau$. Assume that $\M\subset B(H)$ acts 
on some Hilbert space $H$. Let $L^0(\M)$ 
denote the $*$-algebra of all closed, densely defined (possibly unbounded)
operators on $H$, which are
$\tau$-measurable. 
%Let $\widetilde{\tau}$ denote the classical 
%extension of $\tau$ to the positive cone of $L^0(\M)$.
For any $1\leq p<\infty$, 
the noncommutative $L^p$-space $L^p(\mathcal{M})$, 
associated with $(\mathcal{M},\tau)$, can be defined as
$$
L^p(\mathcal{M}):=\bigl\{x\in L^0(\mathcal{M})
\,:\,\tau(\lvert x\rvert^p)<\infty\bigr\}.
$$
Let 
$\|x\|_p:=\tau(\left\lvert x\rvert^p\right)^{\frac{1}{p}}$
for any 
$x\in L^p(\M)$. Then $L^p(\M)$ equipped with $\norm{\,\cdotp}_p$
is a Banach space. We let $L^\infty(\M):=\M$ for convenience and 
we let $\norm{.}_\infty$ denote the operator norm. 
For any $1\leq p<\infty$, let $p'=\frac{p}{p-1}$ 
be the conjugate index of $p$. For any $x\in L^p(\M)$ and $y\in L^{p'}(\M)$, the 
product $xy$ belongs to $L^1(\M)$ and $\vert \tau(xy)
\vert \leq  \norm{x}_p\norm{y}_{p'}$. We further have an isometric
identification $L^{p}(\M)^*\simeq
L^{p'}(\M)$ for the duality pairing given by
$$
\langle y,x\rangle = \tau(xy),\qquad 
x\in L^p(\M),\ y\in L^{p'}(\M).
$$
We let $L^p(\M)^+$ denote the cone of positive elements of $L^p(\M)$. 
The reader is referred to \cite{PX}
and the references therein for details on the algebraic operations
on $L^0(\mathcal{M})$, the construction of $L^p(\M)$, and  for
further properties.

We mention that if $\M=B(H)$ for some Hilbert space $H$, then 
the usual trace ${\rm tr}\colon B(H)^+\to[0,\infty]$ is a 
normal semifinite faithful one and the resulting
noncommutative $L^p$-spaces associated with $(B(H),{\rm tr})$ are the 
Schatten classes $S^p(H)$.

We say that $a,b\in L^0(\M)$ are disjoint if $a^*b=ab^*=0$. 
We say that a bounded operator $T\colon
L^p(\mathcal{M})\to L^p(\M)$, 
$1\leq p\leq \infty$, is separating if whenever 
$a,b\in L^p(\mathcal{M})$ are disjoint then 
$T(a)$ and $T(b)$ are disjoint as well.

A Jordan $*$-homomorphism on a von Neumann algebra $\M$ is a linear 
map $J\colon \M\to\M$ that satisfies $J(a^2)=J(a)^2$ and $J(a^*)=J(a)^*$, 
for every $a\in\M$. It is clear that a Jordan $*$-homomorphism is 
positive, i.e. if $a\in\M^+$ then $J(a)\in\M^+$. We warn the reader
that Jordan $*$-homomorphisms are not always $*$-homomorphisms. For example, the
transposition map on matrices is a Jordan $*$-homomorphism.

However we have the following lemma, in which 
part (1) follows from the identity
$(a+b)^2=a^2 +b^2 + (ab+ba)$ and part (2) is 
given by \cite[10.5.22(iii)]{KR}.

\begin{lemma}\label{commuting}
Let $J:\M\to\M$ be a Jordan $*$-homomorphism. 
\begin{itemize}
\item [(1)] For all $a,b\in\M$, we have $J(ab+ba)=J(a)J(b)
+J(b)J(a)$.
\item [(2)]
If $a,b\in\M$ satisfy $ab=ba$, then we have $J(ab)=J(a)J(b).$
\end{itemize}
\end{lemma}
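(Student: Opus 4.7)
The strategy for both parts is to polarize the defining Jordan identity $J(x^2) = J(x)^2$.

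For part (1), I would apply $J$ to the algebraic identity $(a+b)^2 = a^2 + b^2 + (ab+ba)$. Using linearity and the Jordan identity on each square, the left-hand side becomes $(J(a)+J(b))^2 = J(a)^2 + J(b)^2 + J(a)J(b) + J(b)J(a)$, while the right-hand side becomes $J(a)^2 + J(b)^2 + J(ab+ba)$. Cancelling the common terms yields (1); this is a direct computation requiring nothing beyond linearity.

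For part (2), the first move is to substitute $ab = ba$ into (1), which immediately gives $2J(ab) = J(a)J(b) + J(b)J(a)$. The remaining and main obstacle is to show that $J(a)$ and $J(b)$ commute, since this commutativity is not a formal consequence of the Jordan identity applied once. To extract it, I would first derive the triple identity $J(aba) = J(a)J(b)J(a)$ by applying (1) to the pairs $(a,\,ab+ba)$ and $(a^2,\,b)$ and subtracting; this is a purely algebraic manipulation. Specializing to commuting $a, b$ one has $a^2 b = aba$, so comparing the two expressions obtained for $J(a^2 b)$ yields the relation
\[
J(a)^2 J(b) + J(b) J(a)^2 = 2\, J(a) J(b) J(a),
\]
equivalently $[J(a),[J(a),J(b)]] = 0$. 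Promoting this double-commutator identity to the stronger $[J(a), J(b)] = 0$ is the genuinely delicate step: one completes it by exploiting the $*$-structure of $J$ together with a positivity argument in the target algebra (alternatively, by showing that $J$ restricted to the commutative subalgebra generated by $a$ and $b$ is a genuine $*$-homomorphism via polynomial approximation and the single-variable case $J(c^n)=J(c)^n$). This final step is the content of the Kadison--Ringrose reference cited by the authors.
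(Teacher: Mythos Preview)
Your proof of part (1) is exactly the paper's argument, and for part (2) the paper simply cites \cite[10.5.22(iii)]{KR} without further detail, so your sketch---polarizing to get $2J(ab)=J(a)J(b)+J(b)J(a)$, deriving the triple-product identity and the double-commutator relation, and then deferring the final commutativity step to Kadison--Ringrose---is fully consistent with (indeed more explicit than) the paper's own proof. One small caveat: your parenthetical ``polynomial approximation'' alternative is not quite a complete route, since passing from the single-variable identity $J(c^n)=J(c)^n$ to the two-variable case is precisely the nontrivial content; the positivity/$*$-structure argument you mention first is the correct line.
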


We also record the following properties for further use.
Here a map $J \colon \M \to \M$ is called normal if it is weak$^*$-continuous.

\begin{lemma}\label{kernel}
Let $J\colon \M\to\M$ be a normal Jordan $*$-homomorphism. 
\begin{itemize}
\item [(1)] The kernel $\ker(J)$ is a $w^*$-closed ideal of $\M$.
\item [(2)] If  $(e_i)_i$
is a bounded
net of $\M$ such that $e_i\to 0$ and $e_i^*\to 0$ 
in the strong operator topology, then
$J(e_i)\to 0$ in the strong operator topology.
\end{itemize}
\end{lemma}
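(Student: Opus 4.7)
For part (1), the normality of $J$ immediately gives that $\ker(J)$ is weak*-closed, so the real content is the two-sided ideal property. Note that Lemma \ref{commuting}(1) alone only yields that $\ker(J)$ is closed under the Jordan product $a,b\mapsto ab+ba$, which is weaker than being a two-sided ideal. The plan is to invoke the classical decomposition theorem for normal Jordan $*$-homomorphisms (a Kadison--St\o rmer-type result, contained in \cite{KR}): there exists a central projection $q$ in the von Neumann algebra generated by $J(\M)$ such that $\pi:=qJ(\cdot)$ is a normal $*$-homomorphism and $\sigma:=(1-q)J(\cdot)$ is a normal $*$-antihomomorphism, both with values in $\M$. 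Then
\[
\ker(J) \;=\; \ker(\pi)\cap\ker(\sigma),
\]
and each factor is a weak*-closed two-sided ideal of $\M$: for $\pi$ this is standard, and for $\sigma$ it follows directly from the antihomomorphism identity $\sigma(ab)=\sigma(b)\sigma(a)$, which gives $\sigma(a)=0\Rightarrow \sigma(ab)=\sigma(ba)=0$. The intersection is therefore a weak*-closed two-sided ideal.

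For part (2), the plan is to exploit Lemma \ref{commuting}(1) together with the positivity of $J$ in order to dominate $J(e_i)$ by a positive quantity that is easy to handle. Applying Lemma \ref{commuting}(1) with $a=e_i$ and $b=e_i^*$ yields
\[
J(e_i^*e_i + e_ie_i^*) \;=\; J(e_i)^*J(e_i) + J(e_i)J(e_i)^*.
\]
Using the identity $\langle e_i^*e_i\xi,\eta\rangle=\langle e_i\xi,e_i\eta\rangle$ (and its analog for $e_ie_i^*$) together with the boundedness of $(e_i)_i$ and the hypothesis $e_i,e_i^*\to 0$ in SOT, one checks that $e_i^*e_i+e_ie_i^*\to 0$ in WOT. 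Since this net is bounded and WOT coincides with the ultraweak topology on bounded sets, normality of $J$ implies that the left-hand side tends to $0$ weakly. As the two summands on the right are positive, pairing with any $\xi\in H$ gives
\[
\|J(e_i)\xi\|^2 + \|J(e_i)^*\xi\|^2 \;=\; \bigl\langle J(e_i^*e_i+e_ie_i^*)\xi,\xi\bigr\rangle \longrightarrow 0,
\]
which forces $J(e_i)\to 0$ in SOT (and in fact also $J(e_i)^*\to 0$ in SOT).

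The principal difficulty I anticipate lies in part (1): the bare Jordan structure only yields ideal-type properties after symmetrization, and genuine two-sidedness seems to require either the St\o rmer-type decomposition cited above or a more hands-on argument via support projections (reduce to self-adjoint $a\in\ker(J)$, apply Lemma \ref{commuting}(2) in the commutative von Neumann subalgebra generated by $a$ to conclude that $J(s(a))=0$ for the support projection, and then propagate this to $J(ab)=J(ba)=0$ using the Jordan identity $J(xyx)=J(x)J(y)J(x)$). Part (2) is comparatively self-contained; the key observation is that $J(e_i^*e_i+e_ie_i^*)$ is a single positive quantity that simultaneously controls both $J(e_i)$ and $J(e_i)^*$ in the strong operator sense.
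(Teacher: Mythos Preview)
Your proof of (1) is essentially identical to the paper's: both invoke the St\o rmer decomposition $J=\pi\oplus\sigma$ into a normal $*$-homomorphism and a normal anti-$*$-homomorphism, and conclude via $\ker(J)=\ker(\pi)\cap\ker(\sigma)$.

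For (2), your argument is correct but takes a genuinely different route from the paper. The paper reuses the decomposition from (1): it shows $e_i^*e_i\to 0$ weak$^*$, deduces $\pi(e_i^*e_i)\to 0$ weakly and hence $\norm{\pi(e_i)\zeta}^2=(\pi(e_i^*e_i)\zeta\vert\zeta)\to 0$; the same reasoning with $e_ie_i^*$ handles $\sigma$, and then $J(e_i)=\pi(e_i)\oplus\sigma(e_i)\to 0$ strongly. Your approach via the Jordan identity $J(e_i^*e_i+e_ie_i^*)=J(e_i)^*J(e_i)+J(e_i)J(e_i)^*$ is more self-contained: it uses only the normality and positivity of $J$ together with Lemma~\ref{commuting}(1), bypassing the structure theorem entirely, and delivers $J(e_i)^*\to 0$ strongly as a free byproduct. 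The paper's version has the minor stylistic advantage that the decomposition is already available from (1), so no new ingredient is introduced; your version has the advantage that part (2) would stand on its own even without the St\o rmer theorem.
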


\begin{proof}
(1) Let $J\colon \M\to\M$ be a normal Jordan $*$-homomorphism. 
A well-known  theorem asserts that there exist
two von Neumann algebras $\M_1,\M_2$, a von Neumann algebra 
embedding $\M_1\stackrel{\infty}{\oplus} \M_2\subseteq \M$, a 
normal $*$-homomorphism
$\pi\colon \M\to \M_1$ and a normal anti $*$-homomorphism
$\sigma\colon \M\to \M_2$, such that 
$J(a)=\pi(a)\oplus\sigma(a)$ for all $a\in \M$. 
(See
\cite[Theorem 3.3]{S} or \cite[Corollary 7.4.9.]{HOS} for this result.)
Then $\ker(J)=\ker(\pi)\cap \ker(\sigma)$, hence $\ker(J)$ is 
a weak$^*$-closed ideal.

(2) Let $(e_i)_i$ be a bounded
net of $\M$ such that $e_i\to 0$ and $e_i^*\to 0$ strongly.
Writing $(e_i^*e_i\zeta \vert\eta)=(e_i\zeta
\vert e_i\eta)$, we see that
$(e_i^*e_i\zeta\vert\eta)\to 0$ for all $\zeta,\eta\in\H$. Since 
$(e_i^*e_i)_i$ is bounded, this implies that $e_i^*e_i\to 0$
in the weak$^*$-topology of $\M.$ Consequently, 
$\pi(e_i^*e_i)\to 0$ weakly. Writing 
$\norm{\pi(e_i)\zeta}^2=(\pi(e_i^*e_i)\zeta\vert \zeta)$, we deduce that
$\pi(e_i)\to 0$ strongly. Likewise, using
$e_ie_i^*$ instead of $e_i^*e_i$, we have
that $\sigma(e_i)\to 0$ strongly. Thus, 
$J(e_i)\to 0$ strongly.
\end{proof}

The next statement plays a
fundamental role in the study of separating maps.
It was established independently in \cite{LZ}
and \cite{HRW}.

\begin{proposition}{\cite[Remark 3.3 and Proposition 3.11]{LZ}}\label{Yeadon}
Let $1\leq p<\infty$.
A bounded operator $T\colon L^p(\mathcal{M})\to L^p(\mathcal{M})$
is separating if and only if there exist a normal 
Jordan $*$-homomorphism $J\colon \mathcal{M}\to\mathcal{M}$, a partial isometry $w\in\mathcal{M}$, and a positive operator 
$B$ affiliated with $\mathcal{M}$, which verify the following conditions:
\begin{itemize}
\item [(a)] $T(a)=wBJ(a)$, for all $a\in\mathcal{M}\cap L^p(\mathcal{M})$;
\item [(b)] $w^{\ast}w=J(1)=s(B)$;
\item [(c)] every spectral projection of $B$ commutes with $J(a)$, for all $a\in\mathcal{M}$.
\end{itemize}
Here $s(B)$ denotes the support of $B$. Furthermore, the triple $(w,B,J)$ is unique.
\end{proposition}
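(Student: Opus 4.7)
The plan is to handle the easy direction first, then construct the triple $(w,B,J)$ directly from $T$ for the hard direction, and finally establish uniqueness.

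For sufficiency, suppose $T(a)=wBJ(a)$ satisfies (a)--(c). Given disjoint $a,b\in\M\cap L^p(\M)$, the relation $w^*w=J(1)=s(B)$ from (b) gives $T(a)^*T(b)=J(a)^*BJ(1)BJ(b)=J(a)^*B^2J(b)$. Condition (c) forces $B$ to commute with $J(\M)$ via the Borel functional calculus, so $T(a)^*T(b)=BJ(a)^*J(b)B$. Invoking the decomposition $J=\pi\oplus\sigma$ into a normal $*$-homomorphism and a normal anti-$*$-homomorphism (as used in the proof of Lemma \ref{kernel}(1)), one computes $\pi(a)^*\pi(b)=\pi(a^*b)=0$ and $\sigma(a)^*\sigma(b)=\sigma(b^*a)=0$, hence $J(a)^*J(b)=0$ and $T(a)^*T(b)=0$. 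The identity $T(a)T(b)^*=0$ follows analogously from $ab^*=0$.

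For necessity, assume $T$ is separating. The approach is Yeadon-style. For each projection $e\in\M$ with $\tau(e)<\infty$, consider the polar decomposition $T(e)=u_e|T(e)|$, and let $p(e),q(e)$ denote the left and right supports of $T(e)$. Separation applied to orthogonal projections $e,f$ yields $p(e)\perp p(f)$ and $q(e)\perp q(f)$. I would then define $J$ on projections by $J(e):=q(e)$. Additivity on orthogonal sums follows from the disjointness of supports; multiplicativity $J(e)J(f)=J(ef)$ on commuting projections follows from separation applied to the mutually orthogonal decomposition $e=ef+e(1-f)$ and $f=ef+(1-e)f$; normality follows from weak-operator continuity of support projections combined with boundedness of $T$. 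Extending $J$ to a normal Jordan $*$-homomorphism on $\M$ via spectral theory, the partial isometries $u_e$ glue into a single partial isometry $w$ with $w^*w=J(1)$, and the absolute values $|T(e)|$ assemble into a positive operator $B$ affiliated with $\M$; condition (c) on $B$ arises from compatibility of $T$ with multiplication by spectral projections of $|T(e)|$ combined with the definition of $J$ via supports. The identity $T(a)=wBJ(a)$ is verified on finite linear combinations of finite-trace projections and extended to $\M\cap L^p(\M)$ by density.

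For uniqueness, if $(w',B',J')$ also satisfies (a)--(c), applying both representations to a projection $e$ of finite trace gives $wBJ(e)=w'B'J'(e)$. Essential uniqueness of polar decomposition in a semifinite von Neumann algebra, combined with (b)--(c), pins down $J(e)=J'(e)$, $BJ(e)=B'J'(e)$, and the restriction of $w$ to the range of $J(e)$. Taking suprema of projections and invoking normality (Lemma \ref{kernel}) then yields $J=J'$, $B=B'$, and $w=w'$. The main obstacle I expect is the construction of $J$ itself: showing that the support map $e\mapsto q(e)$ extends to a genuine Jordan $*$-homomorphism. Disjointness of supports on orthogonal projections is automatic from separation, but the multiplicative relation $J(ef)=J(e)J(f)$ on commuting projections, together with compatibility across arbitrary spectral decompositions of self-adjoint elements, requires a careful interplay between the linearity of $T$ on $L^p$, polar decomposition, and normality arguments propagating finite-trace information to the whole of $\M$.
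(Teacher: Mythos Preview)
The paper does not actually prove this proposition. It is stated as a citation to \cite[Remark 3.3 and Proposition 3.11]{LZ}, and the sentence preceding it says the result ``was established independently in \cite{LZ} and \cite{HRW}.'' There is therefore no proof in the paper to compare your proposal against.

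Evaluating your proposal on its own: the sufficiency direction has a small but genuine gap. You verify $T(a)^*T(b)=T(a)T(b)^*=0$ only for disjoint $a,b\in\M\cap L^p(\M)$, whereas the definition of separating demands this for all disjoint $a,b\in L^p(\M)$. The formula $T(a)=wBJ(a)$ in (a) is only postulated on $\M\cap L^p(\M)$, so you cannot simply apply it to arbitrary $L^p$-elements. You need either an approximation argument (which is not automatic, since products do not behave well under $L^p$-convergence), or the fact used elsewhere in the paper via \cite[Remark 3.12]{LZ}: separation on finite-trace projections implies separation on all of $L^p(\M)$.

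Your necessity direction is an accurate outline of the Yeadon-style construction and you correctly flag the hard step, namely passing from the support map $e\mapsto q(e)$ on finite-trace projections to a normal Jordan $*$-homomorphism on all of $\M$. This is indeed where the real work lies in \cite{LZ} and \cite{Yeadon}, and your sketch does not yet contain the argument---in particular, the assertion that ``normality follows from weak-operator continuity of support projections combined with boundedness of $T$'' is not obviously correct, since support projections are not continuous for the weak operator topology in general. That said, as a roadmap your proposal matches the standard approach in the literature.
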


It was shown by Yeadon \cite{Yeadon} that all isometries on $L^p(\M)$, 
$p\neq2$, are separating and have the above mentioned factorisation. 
For this reason, for $T$ as above,
we refer to $(w,B,J)$ as the Yeadon triple 
of $T$.

\begin{lemma}\label{2DenseRange} 
Let $T\colon L^p(\M)\to L^p(\M)$
be a separating map and let $(w,B,J)$ denote its 
Yeadon triple. If $T$ has dense
range, then $J(1)=1$ and $w$ is a unitary.
\end{lemma}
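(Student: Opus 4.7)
The plan is to apply Proposition \ref{Yeadon} to extract the Yeadon triple $(w,B,J)$ of $T$, and then to use the factorization $T(a)=wBJ(a)$ to confine the range of $T$ to a two-sided corner of $L^p(\M)$, whose identification with the whole space will then follow from the density assumption combined with the semifiniteness of $\tau$.

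Set $p:=J(1)=w^*w=s(B)$ and $q:=ww^*$; both are projections in $\M$. The first thing I would show is that $p$ absorbs $J(a)$ on both sides: applying Lemma \ref{commuting}(2) to the commuting pair $(a,1)$ gives $J(a)=J(a)J(1)=J(a)p$, and symmetrically $pJ(a)=J(a)$. Combining these identities with $w=wp$ and $w=qw$ (which follow from $w^*w=p$ and $ww^*=q$), one obtains
$$
qT(a)p \,=\, qwBJ(a)p \,=\, wBJ(a) \,=\, T(a)
$$
for every $a\in\M\cap L^p(\M)$. Since $\M\cap L^p(\M)$ is dense in $L^p(\M)$ and both sides are continuous in $a$, the identity $T(a)=qT(a)p$ persists on all of $L^p(\M)$. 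Hence the range of $T$ lies in the subspace $qL^p(\M)p$, which is closed because it is the image of the bounded idempotent $x\mapsto qxp$.

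The density hypothesis on $T$ then forces $qL^p(\M)p=L^p(\M)$. The last step is to rule out $p\ne 1$ and $q\ne 1$. If $p\ne 1$, semifiniteness of $\tau$ produces a nonzero projection $e\leq 1-p$ with $\tau(e)<\infty$; then $e\in L^p(\M)$, but $ep=0$, so $e$ cannot lie in $L^p(\M)p$, contradicting $qL^p(\M)p=L^p(\M)$. The case $q\ne 1$ is handled symmetrically, by left multiplication on a finite-trace projection below $1-q$. We conclude that $J(1)=p=1$ and $w^*w=ww^*=1$, so $w$ is a unitary.

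I expect the only real obstacle to be careful bookkeeping: one must ensure that the manipulations involving the possibly unbounded positive operator $B$ are legitimate. Since every such step reduces to multiplying the element $T(a)\in L^p(\M)$ on the left and right by the bounded projections $q$ and $p$, there is no genuine analytic difficulty; everything takes place inside $L^p(\M)$.
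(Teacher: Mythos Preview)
Your argument is correct. The paper does not spell out a proof for this lemma but simply refers to \cite[Remark 3.2]{LZ3}, so there is no detailed in-paper argument to compare against; your self-contained approach via the corner $qL^p(\M)p$ and semifiniteness is exactly the kind of ``easy modification'' the authors have in mind. One cosmetic point: you reuse the letter $p$ for the projection $J(1)$ while it already denotes the exponent of the $L^p$-space, so you should rename the projection (say $r:=J(1)$) to avoid the clash.
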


\begin{proof}
The proof is an easy modification of \cite[Remark 3.2]{LZ3}.
\end{proof}

\begin{lemma}\label{2Comp}
Let $1\leq p,q<\infty$. Let $T\colon L^p(\M)+L^q(\M)\to L^p(\M)+L^q(\M)$
and assume that $T\colon L^p(\M)\to L^p(\M)$ and $T\colon
L^q(\M)\to L^q(\M)$ are bounded. 
If $T\colon L^p(\M)\to L^p(\M)$ is separating,
then $T\colon L^q(\M)\to L^q(\M)$ is separating as well. 
\end{lemma}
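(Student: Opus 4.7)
The plan is a density argument: approximate disjoint elements of $L^q(\M)$ by disjoint bounded elements with finite-trace support lying in $L^p(\M)\cap L^q(\M)$, invoke the separating hypothesis on $L^p(\M)$, then pass to the limit in measure. The starting point is the well-known characterization of disjointness via supports: for $a,b\in L^0(\M)$ with polar decompositions $a=u|a|$ and $b=v|b|$, the relations $a^*b=0$ and $ab^*=0$ are equivalent to $u^*v=0$ and $uv^*=0$ respectively, i.e.\ to orthogonality of the left supports $l(a)=uu^*$, $l(b)=vv^*$ and of the right supports $r(a)=u^*u$, $r(b)=v^*v$. In particular, replacing $a$ and $b$ by elements whose left and right supports are dominated by those of $a$ and $b$ preserves disjointness.

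Given disjoint $a,b\in L^q(\M)$, I set $e_n=\chi_{[1/n,n]}(|a|)$ and $f_m=\chi_{[1/m,m]}(|b|)$. Chebyshev's inequality gives $\tau(e_n)\leq n^q\norm{a}_q^q<\infty$ and similarly $\tau(f_m)<\infty$. The truncations $a_n=ae_n$ and $b_m=bf_m$ have operator norm at most $n$ and $m$ respectively and their right supports are dominated by $e_n$ and $f_m$, so $a_n, b_m\in\M\cap L^r(\M)$ for every $r\in[1,\infty]$; in particular they belong to $L^p(\M)\cap L^q(\M)$. Since $e_n$ commutes with $|a|$, one checks directly that $l(a_n)\leq l(a)$ and $r(a_n)\leq r(a)$, and analogously for $b_m$, so by the first paragraph the pair $(a_n,b_m)$ is disjoint for every $n,m$.

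Applying the separating property of $T$ on $L^p(\M)$ yields $T(a_n)^*T(b_m)=0$ and $T(a_n)T(b_m)^*=0$ for all $n,m$. Standard Borel functional calculus arguments give $a_n\to a$ and $b_m\to b$ in $L^q(\M)$, so the boundedness of $T$ on $L^q(\M)$ yields $T(a_n)\to T(a)$ and $T(b_m)\to T(b)$ in $L^q(\M)$, hence in measure in $L^0(\M)$. Multiplication is jointly continuous in measure on $L^0(\M)$, so letting $n,m\to\infty$ in the above identities produces $T(a)^*T(b)=T(a)T(b)^*=0$, as required.

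The only genuinely non-routine point is the support-projection reformulation of disjointness, together with the observation that spectral truncation by projections commuting with the modulus cannot enlarge the left or right supports; once these are in hand, the truncation step and the passage to the limit in measure are mechanical.
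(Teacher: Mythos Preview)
Your argument is correct. The characterization of disjointness through left and right supports is standard, the spectral truncations $a_n=a\chi_{[1/n,n]}(|a|)$ indeed lie in $\M\cap L^p(\M)\cap L^q(\M)$ with supports dominated by those of $a$, and the passage to the limit via joint continuity of multiplication in the measure topology on $L^0(\M)$ is legitimate.

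Your route, however, is different from the paper's. The paper does not perform any approximation of general disjoint pairs. Instead it observes that the set $\mathcal E$ of projections of finite trace sits inside $L^p(\M)\cap L^q(\M)$, so the separating hypothesis on $L^p$ immediately forces $T$ to preserve disjointness of pairs in $\mathcal E$; it then invokes \cite[Remark~3.12(i)]{LZ}, which says that a bounded operator on $L^q(\M)$ is separating as soon as it preserves disjointness of finite-trace projections. In other words, the paper outsources the entire density step to a cited black box, while you carry it out explicitly by spectral truncation and convergence in measure. Your version is longer but self-contained; the paper's version is a two-line reduction that hides the real work inside the reference. Substantively the two approaches rest on the same idea---reduce to elements in $L^p\cap L^q$ and approximate---but the paper reduces all the way down to projections, whereas you stop at bounded elements with finite-trace support, which is already enough.
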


\begin{proof}
Let $\mathcal{E}:=\{e\in \M:e\text{ is a projection with 
$\tau(e)<\infty$}\}$. 
Suppose that $T\colon L^p(\M)\to L^p(\M)$ is separating. 
Since, $\mathcal{E}\subseteq L^p(\mathcal{M})\cap L^{q}(\mathcal{M})$, the operator
$T\colon L^q(\mathcal{M})\to L^q(\mathcal{M})$ also 
preserves disjointness on $\mathcal{E}$. 
By \cite[Remark 3.12 (i)]{LZ}, 
$T\colon L^q(\mathcal{M})\to L^q(\mathcal{M})$ is separating. 
\end{proof}

\section{A characterization of separating Fourier multipliers}
\label{Sec-Separating-Fourier-multipliers}

Let $G$ be a locally compact group with left Haar measure 
$\mu$ defined on the $\sigma$-algebra of Borel sets. 
We will write $ds$ for $d\mu(s)$. Denote by $\lambda$ 
the left regular representation of $G$ defined by
$$
\lambda\colon G\to B(L^2(G));\quad
[\lambda(s)f](t)=f(s^{-1}t).
$$
The left regular representation $\lambda$ determines 
a representation of $L^1(G)$ also denoted by $\lambda$ and defined by 
$$
\lambda\colon L^1(G)\to B(L^2(G)),\quad\lambda(g)\eta=g\ast \eta,
$$
for all $g\in L^1(G)$ and $\eta\in L^2(G)$.
Here, the convolution is $g \ast \eta(t) = \int_G g(s)\eta(s^{-1}t)ds$.
We have that $\lambda(g)=\int_G g(s)\lambda(s)\,
ds$, where the operator integral is understood in the 
strong operator sense.

For any function $g\colon G\to \Cdb$, we let
$$
\check{g}(t)=g(t^{-1})
\qquad\hbox{and}\qquad g^*(t)=\overline{g(t^{-1})},
$$
for all $t\in G$. 

We denote by $e$ the unit element of $G$.
Also for any Borel set $A\subseteq G$, we let 
$\chi_A$ denote the indicator function of $A$.

Let $VN(G)\subseteq B(L^2(G))$ be the von Neumann algebra 
generated by $\{\lambda(s)\,:\,s\in G\}$. This coincides 
with the von Neumann algebra generated by $\{\lambda(g)\, :\,
g\in L^1(G)\}$. When $G$ is abelian, 
we have $VN(G)\simeq L^\infty(\widehat{G})$, 
where $\widehat{G}$ is the dual group of $G$.

We let  $(\,\cdotp\vert\,\cdotp)$ denote the inner product on
$L^2(G)$.
The Fourier algebra of the group $G$ is defined as
$$
A(G)=\{(\lambda(\,\cdotp)\zeta\vert\eta)\, :\,\zeta,\eta\in L^2(G)\}\subseteq C_b(G).
$$
This is a Banach algebra for the pointwise product 
and the norm defined, for any
$\psi\in A(G)$, by
$$
\norm{\psi}_{A(G)} =\inf\{\norm{\zeta}_2\norm{\eta}_2\},
$$
where the infimum runs over all $\zeta,\eta\in L^2(G)$ such that 
$\psi=(\lambda(\,\cdotp)\zeta\vert\eta)$. We note for further use 
that equivalently, we can write
$A(G)=\{\zeta*\eta\,  :\,\zeta,\eta\in L^2(G)\}$.

We recall that $A(G)^*\simeq VN(G)$ isometrically
for the duality pairing given by
\begin{equation}\label{3AG}
\langle\lambda(s),\psi\rangle = \psi(s),
\qquad \psi\in A(G),\ s\in G.
\end{equation}

Assume that $G$ is unimodular. We will use the so-called Plancherel trace 
$$
\tau_G:VN(G)^+\longrightarrow [0,+\infty],
$$
for which we refer to \cite[Section VII.3]{Tak2}
(see also \cite{AK}). We note that $\tau_G$ 
is a normal semifinite faithful trace. This allows to consider 
the noncommutative $L^p$-spaces $L^p(VN(G))$ associated with $\tau_G$. 
We recall that if $G$ is discrete then $G$ is unimodular and $\tau_G$ is 
normalised. Also, if $G$ is abelian then $G$ is 
unimodular and $L^p(VN(G))=L^p(\widehat{G})$, 
where $\widehat{G}$ denotes the dual group of $G$.

It is well-known that for any $g\in L^1(G)\cap L^2(G)$, 
$\lambda(g)\in L^2(VN(G))$ with 
$\|\lambda(g)\|_2=\|g\|_2$ (see 
e.g. \cite[Section 6.1]{AK}). Consequently, 
the restriction of $\lambda$ to $L^1(G)\cap L^2(G)$ 
extends to an isometry from $L^2(G)$ into $L^2(VN(G))$. 
It turns out that
the latter is onto, which yields a unitary 
identification
\begin{equation}\label{2Plancherel}
L^2(VN(G))\simeq L^2(G).
\end{equation}
Using the notation $U_\lambda\colon L^2(G)\to L^2(VN(G))$ for the
above unitary mapping, we have
\begin{equation}\label{3Polar}
\tau_G\bigl(U_\lambda(\zeta)U_\lambda(\eta)\bigr) = \int_G\zeta(t)\check{\eta}(t)\, dt,
\qquad \zeta,\eta\in L^2(G).
\end{equation}
Since $L^1(VN(G))^*\simeq VN(G)$, we have an isometric identification 
$$
A(G)\simeq L^1(VN(G)).
$$
It is not hard to deduce from (\ref{3AG}) and (\ref{3Polar}) that this identification
is given by the mapping $A(G)\to L^1(VN(G))$ taking
$\zeta*\eta$ to $U_\lambda(\check{\eta})U_\lambda(\check{\zeta})$, for all $\zeta,\eta\in L^2(G)$. 
Details are left to the reader.

Let $C_c(G)$ denote the space of continuous and compactly supported functions on $G$. We let $C_c(G)\ast C_c(G)$ denote the linear span of 
$f_1\ast f_2$, where $f_1,f_2\in C_c(G)$. It is well-known that 
$$
\lambda\bigl(C_c(G) \ast C_c(G)\bigr)\subseteq L^1(VN(G))\cap VN(G), 
$$
and that 
$\lambda\bigl(C_c(G) \ast C_c(G)\bigr)$ is 
dense in $L^p(VN(G))$, for all $1\leq p<\infty$.
For a proof, we refer to \cite[Proposition 3.4]{Eym}
for the case $p = 1$, and to \cite[Proposition 4.7]{Daws} for the other cases.
We also note that since $C_c(G) \ast C_c(G)$ is dense in $L^1(G)$, 
$\lambda\bigl(C_c(G) \ast C_c(G)\bigr)$ is 
weak$^*$-dense in $VN(G)$.

\begin{lemma}\label{3L01}
Suppose that $K \subseteq G$ is a compact set. There is a 
function $\psi\in A(G)$ such that $\psi(s)=\mu(K)$, for all $s\in K$. 
\iffalse
Moreover, if $\mu(K) > 0$, there is a function 
$\phi \in C_c(G) \ast C_c(G)$ such that $\phi(s) > 0$ for all $s \in K$.
\fi
\end{lemma}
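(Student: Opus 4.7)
The plan is to build $\psi$ explicitly as a scaled convolution, exploiting the alternative description $A(G)=\{\zeta\ast\eta:\zeta,\eta\in L^2(G)\}$ recorded just before the lemma. Since $G$ is locally compact, I would start by choosing a symmetric open neighbourhood $V$ of the unit $e$ with compact closure; in particular $0<\mu(V)<\infty$, and $KV$ is precompact, so both $\chi_{KV}$ and $\chi_V$ belong to $L^2(G)$.

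Setting $\psi_0:=\chi_{KV}\ast\chi_V$, which lies in $A(G)$ by the description above, a direct computation from the definition of convolution gives
\[
\psi_0(s)=\int_G\chi_{KV}(t)\,\chi_V(t^{-1}s)\,dt=\mu\bigl(KV\cap sV^{-1}\bigr)=\mu\bigl(KV\cap sV\bigr),
\]
the last equality using the symmetry $V^{-1}=V$. For any $s\in K$ one has $sV\subseteq KV$, so $KV\cap sV=sV$, and left-invariance of the Haar measure yields $\psi_0(s)=\mu(sV)=\mu(V)$. The function
\[
\psi:=\frac{\mu(K)}{\mu(V)}\,\psi_0
\]
is then a scalar multiple of an element of $A(G)$, hence lies in $A(G)$, and satisfies $\psi(s)=\mu(K)$ for every $s\in K$, as required.

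The only point that deserves attention is ensuring that $\psi_0$ takes the constant value $\mu(V)$ on all of $K$; this is exactly what the symmetry of $V$ and the inclusion $sV\subseteq KV$ (valid for every $s\in K$ because $e\in V$) guarantee. No further obstacle is anticipated, since the final rescaling by $\mu(K)/\mu(V)$ adjusts the constant to the desired value $\mu(K)$.
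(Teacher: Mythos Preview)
Your argument is correct and follows essentially the same idea as the paper's (commented-out) direct proof: both build $\psi$ as a convolution of two $L^2$ indicator functions of compact sets, chosen so that the convolution is constant on $K$. The paper takes $\psi=\chi_K\ast\chi_{K^{-1}K}$, which lands directly on the value $\mu(K)$ without the final rescaling, but this is only a cosmetic difference from your $\chi_{KV}\ast\chi_V$ construction.
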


\begin{proof}
For a proof, we refer to \cite[Proposition 2.3.2.]{KaL18}.
\iffalse
Let $L:=K^{-1}K=\{t^{-1}u:t,u\in K \}$.
Set $\psi:=\chi_K\ast\chi_{L}$.
Note that since $K$ and also $L$ are compact, 
$\chi_K$ and $\chi_L$ belong to $L^2(G)$, so $\psi$ belongs to $A(G)$.
Then for any $s \in K$,
\begin{equation*}
\psi(s)=\int\chi_{K}(t)\chi_{L}(t^{-1}s)dt=\int_K \chi_L(t^{-1}s)dt = \int_K 1 dt =\mu(K).\qedhere
\end{equation*}
\fi

\iffalse
For the second part of the lemma, take again $L = K^{-1} K$ and denote $\psi_K, \psi_L$ the functions obtained in the first part for the compacts $K$ and $L$.
Note that $\psi_K, \psi_L$ are indeed continuous being elements of $A(G)$, and are of compact support.
Put $\phi = \psi_K\ast \psi_L$.
Then for any $s \in K$,
$$
\phi(s) = \int \psi_K(t) \psi_{L}(t^{-1}s) dt > 0
$$
since $\psi_K , \psi_L \geq 0$ and $\psi_K(t) \psi_{L}(t^{-1}s) > 0$ if $t \in K$, and $\mu(K) > 0$.
\fi
\end{proof}

Some of the formulations of the main results in this 
article are easier when the group $G$ is $\sigma$-compact, 
meaning that $G$ is the countable union of compact subsets.
The following well-known lemma relates this property with other 
countability properties that the group $G$ can have. We provide
a proof for the sake of completeness.

\begin{lemma}
\label{lem-secound-countable-sigma-compact}
Let $G$ be a locally compact group.
Then the following implications hold:
\[ G \text{ is second countable }\Longrightarrow G \text{ is }\sigma\text{-compact} \Longleftrightarrow \text{the Haar measure of }G\text{ is }\sigma\text{-finite}.\]
Moreover, the remaining implication is false.
That is, there exists a ($\sigma$-)compact group which is not second countable.
\end{lemma}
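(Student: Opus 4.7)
My plan is to prove the two forward implications directly, deduce the converse of the equivalence via a coset-counting argument, and exhibit a standard counterexample for the remaining implication.

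For $G$ second countable $\Rightarrow$ $G$ $\sigma$-compact, I would rely on the Lindel\"of property of second countable spaces. Pick a relatively compact open neighborhood $V$ of the identity $e$; then $\{gV : g\in G\}$ is an open cover admitting a countable subcover $\{g_nV\}_n$, and $G=\bigcup_n g_n\overline{V}$ exhibits $G$ as a countable union of compacts. The implication $\sigma$-compact $\Rightarrow$ $\sigma$-finite Haar measure is then immediate since compact subsets of $G$ have finite Haar measure.

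For the converse $\sigma$-finite $\Rightarrow$ $\sigma$-compact, my idea is to introduce the subgroup $H$ generated by a symmetric relatively compact open neighborhood $V$ of $e$. This $H$ is clopen (open subgroups are automatically closed) and $\sigma$-compact, being a countable union of the powers $\overline{V}^n$. Writing $G$ as a disjoint union of cosets $g_\alpha H$, I would fix any nonempty relatively compact open $W\subseteq H$, so the translates $g_\alpha W$ form pairwise disjoint open sets of common positive finite Haar measure $\mu(W)$ by left invariance. Given a decomposition $G=\bigcup_n X_n$ with $\mu(X_n)<\infty$ (which may be made disjoint), the inequality $\sum_\alpha \mu(g_\alpha W\cap X_n)\leq \mu(X_n)$ forces, for each $n$, at most countably many indices to contribute; since every $g_\alpha W$ has positive measure, some $n$ contributes for every $\alpha$, and the index set is therefore countable. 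Hence $G$ is a countable union of $\sigma$-compact cosets. This coset-counting step is the only place where I expect any genuine work; everything else reduces to standard facts about $\sigma$-finite measures and topological groups.

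For the negative part, I would take $G=\{0,1\}^I$ with $I$ uncountable, equipped with coordinatewise addition modulo $2$ and the product topology. Tychonoff's theorem makes $G$ compact (hence $\sigma$-compact) and it is clearly a topological group, but any basic open neighborhood of $0$ only restricts finitely many coordinates, and a straightforward diagonal argument shows that no countable family of such basic neighborhoods can form a local base; hence $G$ is not first countable, let alone second countable.
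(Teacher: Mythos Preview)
Your proof is correct and follows essentially the same strategy as the paper: both arguments hinge on an open $\sigma$-compact subgroup of $G$ and a coset-counting step to pass from $\sigma$-finiteness back to $\sigma$-compactness, with the paper citing Folland for these ingredients while you construct the subgroup and carry out the counting explicitly. Your counterexample $\{0,1\}^I$ differs from the paper's $\Tdb^{\Rdb}$, but both are uncountable products of compact groups and fail first countability for the same reason.
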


\begin{proof}
If $G$ is second countable, then by definition, its topology admits a countable basis.
Since $G$ is locally compact, this basis can be chosen to consist of relatively compact 
sets $O_k,\: k \geq 1$.
Thus,
\[G= \bigcup_{k \in {\mathbb N}} O_k = \bigcup_{k \in
{\mathbb N}} \overline{O_k},\]
so $G$ is $\sigma$-compact.

Next, we show that for the Haar measure $\mu$, $\sigma$-compactness 
and $\sigma$-finiteness are equivalent. Recall  
\cite[Proposition 2.4]{F} that $G$ admits an open, closed, $\sigma$-compact subgroup $G_0$.
Thus, $G = \bigcup_{y \in Y} yG_0$ for some subset 
$Y \subseteq G$ representing the left cosets, where the union is disjoint.
We claim that $Y$ can be chosen at most countable if and only if $G$ is 
$\sigma$-compact, if and only if the Haar measure is $\sigma$-finite.
Indeed, if $Y$ is at most countable, as $yG_0$ is  
$\sigma$-compact for all $y \in Y$, $G$ is $\sigma$-compact.
If $G$ is $\sigma$-compact, say $G = \bigcup_{n \in {\mathbb N}} K_n$ with 
compact $K_n$, then $\mu(K_n) < \infty$ for any $n \in \Ndb$, hence $G$ is $\sigma$-finite.
Finally, suppose that $G$ is $\sigma$-finite, say $G = 
\bigcup_{n \in {\mathbb N}} H_n$ with $\mu(H_n) < \infty$ for any $n \in \Ndb$.
According to \cite[Proposition 2.22]{F}, the fact that 
$\mu(H_n)$ is finite implies that $Y_n := \{y \in Y : \: H_n \cap yG_0 \neq \emptyset\}$ is at most countable.
Thus, $Y' := \{ y \in Y :\: G \cap yG_0 = 
\bigcup_{n \in {\mathbb N}} (H_n \cap yG_0) \neq \emptyset \} = 
\bigcup_{n \in {\mathbb N}} Y_n$ is also at most countable.
But clearly, $Y' = Y$.

Finally, for the last statement, it suffices to take the 
compact non first countable group 
$G = \Tdb^{\mathbb R}$.
\end{proof}

In the sequel we use the space $L^\infty(G)$. 
Its definition requires some care. 
When $G$ is $\sigma$-compact, $L^\infty(G)$ 
is defined in the usual way. But when $G$ 
is not $\sigma$-compact, by the above Lemma 
\ref{lem-secound-countable-sigma-compact}, the left 
Haar measure $\mu$ is not $\sigma$-finite. 
In this case, if we define $L^\infty(G)$ in the  usual way, 
the duality of $L^1(G)$ and $L^\infty(G)$ 
may break down. 
As it is explained in \cite[Section 2.3]{F}, 
it is possible to salvage this duality by modifying 
the definition of $L^\infty(G)$ as follows. 
A set $E\subseteq G$ is called locally Borel 
if $E\cap F$ is Borel whenever $F$ is Borel and $\mu(F)<\infty$. A locally 
Borel set is locally null if $\mu(E\cap F)=0$ 
whenever $F$ is Borel and $\mu(F)<\infty$. 
A function $f\colon G\to\mathbb{C}$ 
is locally measurable if $f^{-1}(A)$ is locally 
Borel for every Borel set $A\subseteq 
\mathbb{C}$. A property is true 
locally almost everywhere if it is true except on a locally null set.

With these definitions in hand, let
$L^\infty(G)$ be the space of all locally measurable functions 
$\phi\colon G\to\Cdb$ that are bounded except 
on a locally null set,
modulo the functions that are zero locally almost everywhere. 
Then $L^\infty(G)$ is a Banach space with the norm
$$
\|\phi\|_\infty=\inf\big\{c:\lvert \phi(t)\rvert\leq 
c\text{ locally almost everywhere}\big\}.
$$

We note that for any $1\leq p<\infty$, any $f\in L^p(G)$ has
a $\sigma$-finite support hence for all
$\phi\in L^\infty(G)$, $\phi f$ is a well-defined 
element of $L^p(G)$.

One may therefore define $\int_G\phi f$ for all $\phi\in L^\infty(G)$
and all $f\in L^1(G)$ and this duality pairing yields an isometric
identification $L^\infty(G)\simeq L^1(G)^*$. 
When $G$ is $\sigma$-compact, $L^\infty(G)$ defined 
as above coincides with the usual one.

\begin{definition}
\label{def-Fourier-multiplier}
For any $\phi\in L^\infty(G)$, let $M_\phi\colon 
\lambda(C_c(G) \ast C_c(G))\to VN(G)$ be defined by 
$$
M_\phi(\lambda(f)):=\lambda(\phi f),
\qquad f\in C_c(G) \ast C_c(G).
$$
For any  $1\leq p < \infty$,
we say that $M_\phi$ is a bounded Fourier multiplier 
on $L^p(VN(G))$ if the above map extends to a bounded operator 
(still denoted by) $M_\phi \colon L^p(VN(G))\to L^p(VN(G))$.
In the sequel we abbreviate this by saying that ``$M_\phi \colon L^p(VN(G))\to L^p(VN(G))$
is a bounded Fourier multiplier".

Likewise, if $p = \infty$, we say that $M_\phi$ 
is a bounded Fourier multiplier on $VN(G)$ if the above map 
extends to a bounded weak$^*$-continuous 
operator $M_\phi \colon VN(G) \to VN(G)$.
\end{definition}

Let $1 \leq p < \infty$.
We recall from \cite[Section 6.1]{AK} 
that if $M_\phi\colon L^p(VN(G))\to L^p(VN(G))$ is 
a bounded Fourier multiplier, then $M_\phi\colon L^{p'}(VN(G))\to 
L^{p'}(VN(G))$ is a bounded Fourier multiplier as well, where 
$p'$ is the conjugate index of $p$.
Moreover, $M_{\check{\phi}}\colon
L^{p'}(VN(G))\to L^{p'}(VN(G))$ is a bounded Fourier multiplier, 
and this operator is actually the adjoint 
of $M_\phi\colon L^p(VN(G))\to L^p(VN(G))$.

Thanks to (\ref{2Plancherel}), we have that 
for any $\phi\in L^\infty(G)$, $M_\phi \colon L^2(VN(G))\to L^2(VN(G))$
is a bounded Fourier multiplier, with 
\begin{equation}\label{Equal}
\norm{M_\phi \colon L^2(VN(G))\longrightarrow 
L^2(VN(G))}=
\norm{\phi}_{L^\infty(G)}.
\end{equation}
Indeed, let us denote as before by $U_\lambda\colon L^2(G)\to L^2(VN(G))$ 
the unitary mapping taking any $f\in L^1(G)\cap L^2(G)$ to
$\lambda(f)$, see 
(\ref{2Plancherel}). Let $\pi\colon L^\infty(G)\to B(L^2(G))$ be
defined by $[\pi(\phi)](f)=\phi f$, for all $\phi\in L^\infty(G)$ and 
all $f\in L^2(G)$.
Then $\pi$ is an isometry. Since
for any $\phi\in L^\infty(G)$,
$U_\lambda\pi(\phi)U_\lambda^*$ coincides with $M_\phi$ on 
$C_c(G) \ast C_c(G)$, we obtain that $M_\phi$ is
a bounded Fourier multiplier on $L^2(VN(G))$ and that
$M_\phi \colon L^2(VN(G))\to L^2(VN(G))$ coincides with 
$U_\lambda\pi(\phi)U_\lambda^*$. Since $U_\lambda\pi(\,\cdotp)U_\lambda^*$ 
is an isometry, the equality (\ref{Equal}) follows.
(See also \cite[Lemma 6.5]{AK}.)
It follows from above that a Fourier multiplier 
$M_\phi \colon L^2(VN(G)) \to L^2(VN(G))$ satisfies 
$$
M_\phi(U_\lambda(f)) = U_\lambda(\phi f),\qquad f \in L^2(G).
$$

\begin{remark}\label{Injective}
If $\phi_1,\phi_2\in L^\infty(G)$ are such that 
$M_{\phi_1}$ and $M_{\phi_2}$ coincide on 
$\lambda(C_c(G) \ast C_c(G))$, then by
(\ref{Equal}), the operator 
$M_{\phi_1-\phi_2}=
M_{\phi_1}-M_{\phi_2} \colon
L^2(VN(G))\to L^2(VN(G))$
is equal to $0$, hence $\phi_1=\phi_2$ locally almost everywhere.
\end{remark}

For the rest of this section, we fix 
a net $(f_i)_{i\in I}$ in 
$C_c(G)\ast C_c(G)$ such that $f_i\geq0$
and $\int_G f_i(s)\, ds=1$ for all $i\in I$,
the supports of $f_i$'s are contained in some compact neighborhood $V_i$ of $e$
where the net $(V_i)_{i \in I},$ is decreasing, and 
$\bigcap_{i\in I}V_i=\{e\}$.
We set $e_i:=\lambda(f_i)$. Then for all $i\in I$, we have
$$
e_i\in L^1(VN(G))\cap VN(G)
\qquad\hbox{and}\qquad
\|e_i\|_{VN(G)}\leq1.
$$

\begin{lemma}\label{ei}
We both have $e_i\to 1$ and $e_i^*\to 1$
in the strong operator topology. 
\end{lemma}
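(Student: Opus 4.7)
My plan is to reduce both convergences to the classical approximate-identity argument in $L^2(G)$, handling the adjoint via the unimodularity of $G$.

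For $e_i \to 1$ strongly, I would start from the strong-operator integral representation $e_i = \lambda(f_i) = \int_G f_i(s)\lambda(s)\,ds$ recalled at the beginning of this section. For $\eta \in L^2(G)$, since $\int_G f_i(s)\,ds = 1$, this gives
$$e_i\eta - \eta = \int_G f_i(s)\bigl(\lambda(s)\eta - \eta\bigr)\,ds,$$
so $\norm{e_i\eta - \eta}_2 \leq \int_{V_i} f_i(s)\, \norm{\lambda(s)\eta - \eta}_2\,ds$. By strong continuity of the left regular representation on $L^2(G)$, for any $\varepsilon > 0$ there is a neighborhood $U$ of $e$ with $\norm{\lambda(s)\eta - \eta}_2 < \varepsilon$ for $s \in U$. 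Since $(V_i)_i$ is decreasing with $\bigcap_i V_i = \{e\}$, we eventually have $V_i \subseteq U$, and $\norm{e_i\eta - \eta}_2 \leq \varepsilon$.

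For $e_i^* \to 1$ strongly, I would compute the adjoint: a routine check using the unimodularity of $G$ shows $\lambda(f)^* = \lambda(f^\sharp)$ for $f \in L^1(G)$, where $f^\sharp(t) := \overline{f(t^{-1})}$. Hence $e_i^* = \lambda(f_i^\sharp)$, and because $f_i \geq 0$ one has $f_i^\sharp(t) = f_i(t^{-1}) \geq 0$. Again by unimodularity $\int_G f_i^\sharp\,ds = \int_G f_i\,ds = 1$, the support of $f_i^\sharp$ is contained in the compact neighborhood $V_i^{-1}$ of $e$, and $(V_i^{-1})_i$ is a decreasing family with $\bigcap_i V_i^{-1} = \{e\}$. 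The net $(f_i^\sharp)$ therefore satisfies exactly the hypotheses placed on $(f_i)$, and the previous argument yields $e_i^* \to 1$ strongly.

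I do not expect any substantive obstacle: the only delicate point is the adjoint identity $\lambda(f)^* = \lambda(f^\sharp)$, where unimodularity is essential both to avoid a modular-function factor in $f^\sharp$ and to equate $\int f_i^\sharp\,ds$ with $\int f_i\,ds$. The remainder is the classical convolution approximate-identity argument transferred from $L^1$ to the strong operator topology of $B(L^2(G))$.
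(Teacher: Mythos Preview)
Your proof is correct and follows essentially the same route as the paper: bound $\norm{e_i\eta-\eta}_2$ by $\int f_i(s)\norm{\lambda(s)\eta-\eta}_2\,ds$ and use strong continuity of $\lambda$ at $e$, then observe that $e_i^*=\lambda(f_i^*)$ with $f_i^*(t)=\overline{f_i(t^{-1})}$ enjoying the same properties as $f_i$ (the paper leaves the role of unimodularity implicit, since it is a standing hypothesis in that section).
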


\begin{proof}
Let $\zeta\in L^2(G)$. 
Since each $f_i$ is non-negative and $L^1$-normalized, we have
$$
\|e_i(\zeta)-\zeta\|_{L^2(G)}
=\Big\|\int_G f_i(t)
\left(\lambda(t)\zeta-\zeta\right)\, dt \Big\|_{L^2(G)}
\leq \int_G  f_i(t)\
\|\lambda(t)\zeta-\zeta\|_{_{L^2(G)}}\, dt.
$$
Since $\lambda(t)\to 1$ as $t\to e$ in the strong operator topology,
the assumptions on $(f_i)_{i\in I}$ ensure that
the right hand-side tends to $0$, when $i\to\infty$.  
Hence $\|e_i(\zeta)-\zeta\|_{L^2(G)}\to 0$.

This shows that $e_i\to 1$ strongly.
Since $e_i^*=\lambda(f_i^*)$ and the $f_i^*$
have the same features as the $f_i$, we also have that  
$e_i^*\to 1$ strongly.
\end{proof}

We let $C_b(G)$ be the space of bounded and continuous functions on $G$.

The following lemma shows that the definition
of bounded Fourier multipliers $VN(G)\to VN(G)$ considered
in this paper coincide with the one in \cite{DCH}.

\begin{lemma}\label{Continuity}
Let $\phi\in L^\infty(G)$ and assume that 
$M_\phi$ is a bounded Fourier multiplier on $VN(G)$. Then
there exists $\psi\in C_b(G)$ such that 
$\phi=\psi$ locally almost everywhere, and
$$
M_\psi(\lambda(s))=\psi(s)\lambda(s),\qquad s\in G.
$$
\end{lemma}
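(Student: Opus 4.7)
Since $M_\phi$ is by hypothesis weak$^*$-continuous on $VN(G)$, and $VN(G) \cong A(G)^*$, the plan is to work with its pre-adjoint $T \colon A(G) \to A(G)$ and extract from it a continuous representative of $\phi$. For any $u \in A(G)$ and $f \in C_c(G) \ast C_c(G)$, the duality $\langle \lambda(s),u\rangle = u(s)$ extends to give $\langle \lambda(f),u\rangle = \int_G f(s) u(s)\, ds$, so
$$
\int_G f(s)\, T(u)(s)\, ds \;=\; \langle \lambda(f), T(u)\rangle \;=\; \langle M_\phi(\lambda(f)),u\rangle \;=\; \int_G \phi(s) u(s) f(s)\, ds.
$$
Density of $C_c(G)\ast C_c(G)$ in $L^1(G)$ forces $T(u) = \phi u$ locally almost everywhere for every $u \in A(G)$. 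Since $T(u) \in A(G) \subseteq C_b(G)$, this exhibits $\phi u$ as agreeing locally a.e.\ with a continuous function.

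To construct $\psi \in C_b(G)$ itself, fix $s_0 \in G$ and apply Lemma \ref{3L01} to a compact neighborhood $K$ of $s_0$ (which has positive Haar measure) to produce $u^K \in A(G)$ with $u^K \equiv \mu(K) \neq 0$ on $K$. Define $\psi(s_0) := T(u^K)(s_0)/u^K(s_0)$. Independence of the choice of $u^K$ follows from the observation that for any $u_1, u_2 \in A(G)$ nonvanishing at $s_0$, the continuous functions $u_2 \, T(u_1)$ and $u_1 \, T(u_2)$ both equal $\phi u_1 u_2$ locally a.e., hence agree pointwise on $G$ (two continuous functions on $G$ that coincide locally a.e.\ must coincide everywhere, since otherwise the open set where they differ would contain a compact set of positive Haar measure). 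The same identity shows that on a neighborhood of $s_0$ one has $\psi = T(u^K)/u^K$, a continuous function, so $\psi \in C(G)$; combined with $\psi = \phi$ locally a.e.\ and the $L^\infty$ bound on $\phi$, this yields $\psi \in C_b(G)$.

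Finally, I would check $M_\phi(\lambda(s)) = \psi(s)\lambda(s)$ via the approximate identity $e_i = \lambda(f_i)$. A direct computation gives $\lambda(s) e_i = \lambda(L_s f_i)$, where $(L_s g)(t) := g(s^{-1}t)$, and $L_s f_i$ remains in $C_c(G) \ast C_c(G)$. Therefore
$$
M_\phi(\lambda(s) e_i) - \psi(s)\lambda(s) e_i \;=\; \lambda\bigl((\psi - \psi(s)) \cdot L_s f_i\bigr),
$$
whose $VN(G)$-norm is bounded by $\sup_{t \in sV_i}|\psi(t) - \psi(s)|$ and therefore tends to $0$ by continuity of $\psi$ and $\bigcap_i V_i = \{e\}$. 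Combined with Lemma \ref{ei} (which gives $\lambda(s) e_i \to \lambda(s)$ in the weak operator, hence weak$^*$, topology) and the weak$^*$-continuity of $M_\phi$, this forces $M_\phi(\lambda(s)) = \psi(s)\lambda(s)$; since $\phi = \psi$ locally a.e.\ the multiplier $M_\psi$ coincides with $M_\phi$ on $\lambda(C_c(G)\ast C_c(G))$ and therefore on all of $VN(G)$, completing the proof. The main obstacle is the global construction of $\psi$: the pre-adjoint argument easily produces a continuous version of $\phi u$ for each individual $u \in A(G)$, but patching these into a single continuous function defined on all of $G$ is where Lemma \ref{3L01} and the rigidity of continuous functions under locally-a.e.\ equality play the decisive role.
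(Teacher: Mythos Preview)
Your proof is correct and takes a genuinely different route from the paper's. The paper first establishes $M_\phi(\lambda(s))\in\mathrm{Span}\{\lambda(s)\}$ by computing $\langle M_\phi(\lambda(s)e_i),\varphi\rangle$ for $\varphi\in A(G)$ with $\varphi(s)=0$ and passing to the limit; it then \emph{defines} $\psi$ by $M_\phi(\lambda(s))=\psi(s)\lambda(s)$, observes that the pre-adjoint on $A(G)$ is pointwise multiplication by $\psi$, and cites \cite{DCH} for the continuity of $\psi$. You reverse the order: you first identify the pre-adjoint as $u\mapsto\phi u$, then build $\psi$ by hand via the local quotients $T(u^K)/u^K$ using Lemma~\ref{3L01} and the rigidity of continuous functions under locally-a.e.\ equality, and only afterwards verify the formula on $\lambda(s)$ with the approximate identity. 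Your approach is more self-contained, since it proves continuity of $\psi$ directly rather than by citation, and makes the patching argument explicit; the paper's approach is shorter by outsourcing that step. Both arguments lean on the approximate identity $(e_i)$, but at opposite ends of the proof.

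One small point worth making explicit in your write-up: when you assert ``$\psi=\phi$ locally a.e.'', you have so far only shown this on the interior of each compact neighborhood $K$; to upgrade to a global locally-a.e.\ statement you should either invoke Lemma~\ref{lem-locally-ae-compact} or note that any compact set is covered by finitely many such interiors.
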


\begin{proof}
Assume that
$M_\phi\colon VN(G)\to VN(G)$ is a bounded Fourier multiplier. 
Let us show that for any $s\in G$,
\begin{equation}\label{Pointwise}
M_\phi(\lambda(s))\in {\rm Span}\{\lambda(s)\}.
\end{equation}
Fix $s\in G$. For all $i\in I$, $\lambda(s)e_i=\int_G f_i(t)\lambda(st)dt$, hence 
$M_\phi(\lambda(s)e_i)=\int_G\phi(st)f_i(t)\lambda(st)\,dt$, where these
integrals are defined in the strong operator topology.
Therefore, for all $\varphi\in A(G)$, we have 
\begin{align*}
\langle M_\phi(\lambda(s)e_i),\varphi\rangle&
=\int_G\phi(st)f_i(t)\langle\lambda(st),\varphi\rangle \,dt\\
&=\int_G\phi(st)f_i(t)\varphi(st)\,dt.
\end{align*}
Let $\varphi\in A(G)$ such that $\varphi(s)=0$. 
From the above, we have
\[
\vert\langle M_\phi(\lambda(s)e_i),
\varphi\rangle\vert\leq\|\phi\|_\infty\int_G f_i(t)\vert\varphi(st)-\varphi(s)\vert \,dt.
\]
Since $\varphi$ is continuous, 
the right hand-side of the above inequality tends to zero 
when $i\to\infty$. Moreover by Lemma 
\ref{ei}, $\lambda(s)e_i\to\lambda(s)$ in the 
strong operator topology. Since $(e_i)_{i\in I}$
is bounded, this implies that
$\lambda(s)e_i\to\lambda(s)$ in the weak$^*$-topology. 
Therefore, $M_\phi(\lambda(s)e_i)\to M_\phi(\lambda(s))$ in the weak$^*$-topology. Hence, $\langle M_\phi(\lambda(s)e_i),\varphi\rangle\to\langle M_\phi(\lambda(s)),\varphi\rangle$. We obtain that $\langle M_\phi(\lambda(s)),\varphi\rangle=0$. 
Hence, $M_\phi(\lambda(s))\in\{\varphi\, :\,
\varphi(s)=0\}^\perp$ in the duality 
$A(G)^*\simeq VN(G)$. Since ${\rm Span}\{\lambda(s)\}_\perp=
\{\varphi\, :\, \varphi(s)=0\}$, we deduce (\ref{Pointwise}).

Let $\psi\colon G\to\mathbb{C}$ be the unique 
function such that for all $s\in G$, 
$M_\phi(\lambda(s))=\psi(s)\lambda(s)$. 
The pre-adjoint $A(G)\to A(G)$ of $M_\phi$
is the pointwise multiplication by $\psi$.
According to the comment following
\cite[1.1. Definition]{DCH}, the function $\psi$ is 
therefore continuous. 

Next, we show that $\phi=\psi$ locally almost everywhere. 
For any $f\in C_c(G)\ast C_c(G)$, we have 
$\lambda(f)=\int_G f(t)\lambda(t)\,dt$. This SOT-integral is absolutely
convergent in $VN(G)$, hence 
\begin{align*}
M_\phi(\lambda(f))&=\int_G f(t)M_\phi(\lambda(t)) \,dt\\
&=\int_G f(t)\psi(t)\lambda(t)\, dt\\
& =M_\psi(\lambda(f)).
\end{align*}
This implies that $M_\phi=M_\psi$ on $C_c(G)\ast C_c(G)$,
and the result follows by Remark \ref{Injective}.

For the regularity of $G$ needed in the proof of \cite[1.1. Definition]{DCH}, we refer to \cite[Theorem 2.3.8 p.~53]{KaL18}.
See also the discussion following \cite[Definition 6.3]{AK}.
\end{proof}

\iffalse
Indeed, since $M_\phi$ is by definition weak$^*$-continuous, 
it admits a preadjoint $(M_\phi)_* = M_{\check{\phi}} \colon L^1(VN(G)) \to L^1(VN(G))$.
Note that since $L^1(VN(G))$ is canonically isometric to the Fourier algebra $A(G)$ (see after \eqref{3Polar} hereabove), this entails that the pointwise multiplier $m_{\check{\phi}} : C_c(G) \ast C_c(G) \subseteq A(G) \to A(G), \: f \ast g \mapsto \check{\phi} \cdot f \ast g$ is bounded.
By density, it extends to a pointwise multiplier $A(G) \to A(G), \: f \mapsto \check{\phi} f$.
Since elements of $A(G)$ are continuous, by Lemma \ref{3L01}, we conclude that $\check{\phi}$ coincides on any compact set almost everywhere with a continuous function.
It is not hard to see that this implies that there is a continuous function $\psi \colon G \to \Cdb$ such that $\check{\phi}|_K = \psi|_K$ on any compact $K \subseteq G$.
By Lemma \ref{lem-locally-ae-compact} below, we infer that $\phi$ coincides locally almost everywhere with the continuous function $\check\psi$.
\fi

Let $\Tdb$ denote the unit circle of $\Cdb$.
A homomorphism $\varphi\colon G\to\Tdb$ is called a character.
We let $Hom(G,\mathbb{T})$ denote the collection of all 
characters on $G$. There is a natural isomorphism between 
$Hom(G,\mathbb{T})$ and $Hom(\frac{G}{[G,G]},\mathbb{T})$, 
where $[G,G]$ denotes the commutator subgroup of $G$ 
\cite[(23.8) Theorem p.~358-359]{HR}. 
When $G$ is a perfect group, i.e. $[G,G]=G$, 
the only character on $G$ is the trivial one,
that is, $Hom(G,\mathbb{T})=\{1\}$.
Examples of perfect groups include non-abelian 
simple groups and the special linear groups $SL_n(\mathbb{K})$, 
for a fixed field $\mathbb{K}$.

The following is the main result of this section. In 
view of this theorem and the observation above, we 
see that there are groups with relatively 
few separating Fourier multipliers.

\begin{theorem}\label{3Main} 
Assume that $G$ is a locally compact unimodular 
$\sigma$-compact group,
let $1\leq p<\infty$ and let $\phi\in L^\infty(G)$. 
The following are equivalent.
\begin{itemize}
\item [(i)] The mapping $M_\phi$ 
is a bounded Fourier multiplier on $L^p(VN(G))$,
and the operator 
$M_\phi \colon L^p(VN(G))\to L^p(VN(G))$ is separating.
\item[(ii)] There exist a constant $c\in\Cdb$ and a continuous 
character $\psi\colon G\to\Tdb$ such that $\phi=c\psi$ almost everywhere.
\end{itemize}
\end{theorem}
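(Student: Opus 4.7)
For (ii) $\Rightarrow$ (i), I would check directly that if $\phi=c\psi$ with $\psi\colon G\to\Tdb$ a continuous character, then $M_\psi$ is the normal $\ast$-automorphism of $VN(G)$ sending $\lambda(s)$ to $\psi(s)\lambda(s)$. The key identities $(\psi f)\ast(\psi g)=\psi\cdot(f\ast g)$ (from $\psi(st)=\psi(s)\psi(t)$) and $(\psi f)^{\ast}=\psi\cdot f^{\ast}$ (from $\overline{\psi(s^{-1})}=\psi(s)$) show that $M_\psi$ is multiplicative and $\ast$-preserving on $\lambda(C_c(G)\ast C_c(G))$; normal extension and trace preservation then give an isometric $\ast$-automorphism of each $L^p(VN(G))$. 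Hence $M_\phi=cM_\psi$ is bounded, and separating since $\ast$-automorphisms preserve the relation $a^{\ast}b=ab^{\ast}=0$.

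For (i) $\Rightarrow$ (ii), Proposition~\ref{Yeadon} provides a Yeadon triple $(w,B,J)$ satisfying
\[\lambda(\phi f)=wBJ(\lambda(f)),\qquad f\in C_c(G)\ast C_c(G).\]
The first move is to disintegrate this identity over $G$: writing $\lambda(f)=\int f(s)\lambda(s)\,ds$ as an SOT-convergent integral, using the normality of $J$ to get $J(\lambda(f))=\int f(s)J(\lambda(s))\,ds$ in the weak$^{\ast}$ sense, then pairing against $A(G)$ and invoking the density of $C_c(G)\ast C_c(G)$ in $L^1(G)$. This will produce the pointwise identity
\[\phi(s)\lambda(s)=wBJ(\lambda(s))\qquad\text{for locally a.e.\ } s\in G.\]

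Next, the core structural step is to show $wB=c\cdot 1$ for some scalar $c\in\Cdb$. I would test the central identity against the approximate unit $e_i=\lambda(f_i)$ of Lemma~\ref{ei} and use Lemma~\ref{kernel}(2), applied to $1-e_i$, to pass $J$ through the SOT limit; this gives $J(e_i)\to J(1)$ in SOT and, after reducing to the nontrivial case $\phi\not\equiv 0$ so that $M_\phi$ has dense range, Lemma~\ref{2DenseRange} yields $J(1)=1$ and $w$ unitary. Then Lemma~\ref{commuting}(2) applied to the commuting pair $\lambda(s),\lambda(s^{-1})$ makes $J(\lambda(s))$ unitary for every $s\in G$; taking moduli in the pointwise identity forces $\lvert\phi(s)\rvert^{2}=(wB)(wB)^{\ast}$ for l.a.e.\ $s$, so $\lvert\phi\rvert$ is essentially constant and $\lvert wB\rvert$ is a scalar. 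A conjugation-invariance argument obtained by substituting the pointwise identity back through $\lambda(st)=\lambda(s)\lambda(t)$ will show $\lambda(s)(wB)\lambda(s)^{\ast}=wB$ for l.a.e.\ $s$, placing $wB$ in $Z(VN(G))$; combined with constant modulus, this forces $wB=c\cdot 1$.

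Once $wB=c$, I would substitute $J(\lambda(u))=c^{-1}\phi(u)\lambda(u)$ into Lemma~\ref{commuting}(1) with $a=\lambda(s),\,b=\lambda(t)$ to obtain
\[c^{-1}\bigl[\phi(st)\lambda(st)+\phi(ts)\lambda(ts)\bigr]=c^{-2}\phi(s)\phi(t)\bigl[\lambda(st)+\lambda(ts)\bigr],\]
and pair with an element of $A(G)$ separating the points $st,ts\in G$ (which exists by Lemma~\ref{3L01}) to deduce $\phi(st)=c^{-1}\phi(s)\phi(t)$. Then $\psi:=c^{-1}\phi$ is a bounded homomorphism $G\to\Cdb^{\times}$, hence takes values in $\Tdb$; continuity of $\psi$ follows from Lemma~\ref{Continuity} applied to $M_\psi=c^{-1}M_\phi$, which a posteriori extends to a $\ast$-automorphism of $VN(G)$. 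The main obstacle I anticipate is the scalar reduction $wB=c\cdot 1$: because $B$ is only affiliated with $VN(G)$ and possibly unbounded, one cannot simply evaluate the pointwise identity at $s=e$, and the argument must proceed through group-averaged versions of the identity combined with conjugation invariance across all of $G$—where the unimodularity hypothesis enters essentially through the Plancherel trace.
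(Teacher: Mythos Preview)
Your (ii)$\Rightarrow$(i) argument is fine and arguably more direct than the paper's (which routes through positive-definiteness and complete positivity rather than verifying the $\ast$-automorphism property of $M_\psi$ by hand).

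The (i)$\Rightarrow$(ii) direction, however, has genuine gaps.

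\textbf{The pointwise identity is not justified.} The Yeadon factorization $M_\phi(a)=wBJ(a)$ is only available for $a\in VN(G)\cap L^2(VN(G))$, and $\lambda(s)\notin L^2(VN(G))$ unless $G$ is discrete. Passing $wB$ inside the SOT-integral $\int f(s)J(\lambda(s))\,ds$ is precisely the step that fails when $B$ is unbounded; you flag this as the main obstacle and do not resolve it.

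\textbf{Dense range is not automatic.} The implication ``$\phi\not\equiv 0\Rightarrow M_\phi$ has dense range'' is the content of Lemma~\ref{3L1}(1)--(2), whose proof is substantial: one shows that $\phi$ is nonzero a.e.\ on every compact by exploiting that $\ker(J)$ is a weak$^*$-closed ideal. You cannot simply assert it.

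\textbf{The scalar reduction is flawed.} Even granting the pointwise identity and dense range, your argument breaks down. The modulus step does yield $B=|c|\cdot 1$, but then: (a) the ``conjugation-invariance argument'' you sketch does not go through, since the Jordan property gives no control of $J(\lambda(st))$ in terms of $J(\lambda(s))$ and $J(\lambda(t))$ separately, so there is no clear route to $\lambda(s)(wB)\lambda(s)^{-1}=wB$; and (b) even if $wB$ were central with constant modulus, this would not force it to be a scalar, since $Z(VN(G))$ is in general nontrivial (for abelian $G$ it is all of $L^\infty(\widehat{G})$, and any unimodular function would satisfy your conditions).

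The paper circumvents all three issues by never attempting a pointwise version of the Yeadon factorization. It first establishes, via Lemma~\ref{3L1}, that $M_\phi$ has dense range and that $\phi$ has a continuous representative. It then applies the Jordan identity of Lemma~\ref{commuting}(1) to the pair $a=e_j$, $b=\lambda(s)e_j$, which lie in $VN(G)\cap L^2(VN(G))$, and takes the SOT-limit using Lemma~\ref{3L2}. The key observation is that $wBJ(e_j)=M_\phi(e_j)\to\phi(e)\cdot 1$ strongly: the scalar $\phi(e)$ appears as a limit of bounded operators, yielding $\phi(s)\lambda(s)=\phi(e)J(\lambda(s))$ directly, without ever having to isolate $wB$ or argue about the center.
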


The proof will be given after a series of intermediate results.

\begin{lemma}\label{weak-star conv}
Let $\M$ be a semifinite von Neumann algebra. 
Let $(x_j)_{j}$ be a net in $\M\cap L^2(\M)$ with 
$\sup\|x_j\|_{\infty}<\infty$. If $\|x_j\|_{2}\to 0$, 
then $x_j\to 0$  in the weak$^*$-topology of $\M$. 
\end{lemma}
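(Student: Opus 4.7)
The plan is to exploit the standard identification of the predual of $\M$ with $L^1(\M)$ via trace duality, and then reduce the claim to a Hölder estimate on a dense subspace.

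First, I would recall that $\M_* \simeq L^1(\M)$ isometrically, with the pairing $\langle x, y\rangle = \tau(xy)$ for $x \in \M$ and $y \in L^1(\M)$. Hence proving weak$^*$-convergence $x_j \to 0$ in $\M$ amounts to showing that $\tau(x_j y) \to 0$ for every $y \in L^1(\M)$. Set $C := \sup_j \|x_j\|_\infty < \infty$.

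Next, I would handle the case where $y$ lies in the dense subspace $L^1(\M) \cap L^2(\M)$ of $L^1(\M)$. For such $y$, we have $x_j, y \in L^2(\M)$, so by the noncommutative Hölder inequality (recalled in the preliminaries),
\[
|\tau(x_j y)| \leq \|x_j\|_2 \|y\|_2 \longrightarrow 0,
\]
using the hypothesis $\|x_j\|_2 \to 0$.

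For general $y \in L^1(\M)$ and $\varepsilon > 0$, I would pick $y' \in L^1(\M) \cap L^2(\M)$ with $\|y - y'\|_1 < \varepsilon$. Writing $\tau(x_j y) = \tau(x_j y') + \tau(x_j(y-y'))$ and using the bound $|\tau(x_j(y-y'))| \leq \|x_j\|_\infty \|y-y'\|_1 \leq C\varepsilon$, together with the previous step applied to $y'$, yields $\limsup_j |\tau(x_j y)| \leq C\varepsilon$. Since $\varepsilon$ is arbitrary, $\tau(x_j y) \to 0$, which establishes the weak$^*$-convergence.

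There is no serious obstacle here: the argument is entirely a density-plus-Hölder reduction. The only point worth noting is that the uniform bound $\sup_j \|x_j\|_\infty < \infty$ is essential in order to control the error term $\tau(x_j(y-y'))$ when approximating a generic $y \in L^1(\M)$ by elements of $L^1(\M) \cap L^2(\M)$.
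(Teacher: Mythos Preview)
Your proof is correct and follows essentially the same approach as the paper: apply the H\"older inequality on the dense subspace $L^1(\M)\cap L^2(\M)$ and then use the uniform bound $\sup_j\|x_j\|_\infty<\infty$ to pass to arbitrary $y\in L^1(\M)$ by density. The only difference is that you spell out the $\varepsilon$-approximation explicitly, whereas the paper leaves that routine step implicit.
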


\begin{proof}
Take $y\in L^1(\M)\cap L^2(\M)$. We have that 
$$
\vert\langle y,x_j\rangle\vert\leq\|y\|_2\|x_j\|_2\to0,
$$
and therefore $\langle y,x_j\rangle\to 0$. Since $L^1(\M)\cap L^2(\M)$ 
is dense in $L^1(\M)$ and $(x_j)_{j}$ is bounded in $\M$, 
this implies that $\langle y,x_j\rangle\to 0$, for all $y\in L^1(\M)$. That
is, $x_j\to 0$ in the weak$^*$-topology of $\M$.
\end{proof}

In the following lemma, $G$ is not necessarily $\sigma$-compact.

\begin{lemma}\label{3L1} Let $\phi\in L^\infty(G)$. 
Assume that the Fourier multiplier 
$M_\phi \colon L^2(VN(G))\to L^2(VN(G))$ is separating and non-zero. 
\begin{itemize}
\item [(1)]
For any compact $K\subseteq G$, the restriction $\phi\vert_{K}$ is 
non-zero almost
everywhere on $K$.
\item [(2)] The operator $M_\phi \colon L^2(VN(G))\to L^2(VN(G))$
has dense range.
\item [(3)] For any compact $K\subseteq G$, there exists a continuous 
function $\Phi\colon K\to \Cdb$ such that $\phi\vert_K=\Phi$ almost
everywhere on $K$.
\end{itemize}
\end{lemma}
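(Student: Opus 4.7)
My plan is to prove (1), (2), (3) in turn, leveraging the Yeadon factorization $M_\phi(a) = wBJ(a)$ for $a \in VN(G) \cap L^2(VN(G))$ provided by Proposition~\ref{Yeadon}; denote the Yeadon triple by $(w,B,J)$.

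\emph{Part (1).} I argue by contradiction: suppose $E \subseteq K$ has $\mu(E) > 0$ and $\phi \equiv 0$ on $E$. Since $K$ is compact, $\chi_E \in L^1(G) \cap L^2(G)$, so $\lambda(\chi_E) \in VN(G) \cap L^2(VN(G))$ and $M_\phi(\lambda(\chi_E)) = \lambda(\phi\chi_E) = 0$. Using $w^*w = J(1) = s(B)$, the commutation of spectral projections of $B$ with $J(\lambda(\chi_E))$, and the Jordan identity $J(a) = J(1)J(a) = J(a)J(1)$, one deduces $J(\lambda(\chi_E)) = 0$. By Lemma~\ref{kernel}(1), $\ker J$ is a weak$^*$-closed ideal of $VN(G)$, so $\lambda(g \ast \chi_E \ast h) = \lambda(g)\lambda(\chi_E)\lambda(h) \in \ker J$ for every $g, h \in L^1(G)$, whence $\phi \cdot (g \ast \chi_E \ast h) = 0$ a.e. Choosing $g = \chi_{sV}$ and $h = \chi_V$ for a small symmetric compact neighborhood $V$ of $e$ and varying $s \in G$, the continuous convolution $g \ast \chi_E \ast h$ is strictly positive on a neighborhood of $s e_0$ for any density point $e_0 \in E$. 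As $s$ ranges over $G$ this covers all of $G$, so $\phi = 0$ locally almost everywhere, contradicting $M_\phi \neq 0$.

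\emph{Part (2).} Under the Plancherel unitary $U_\lambda$, $M_\phi$ is conjugate to multiplication by $\phi$ on $L^2(G)$. Part (1) applied to every compact subset forces $\{\phi = 0\}$ to be locally null. For $\eta \in L^2(G)$, the functions $\eta_n := \eta \chi_{\{|\phi| \geq 1/n\}}$ lie in the range of multiplication by $\phi$ and converge to $\eta$ in $L^2$; hence $M_\phi$ has dense range.

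\emph{Part (3), the main obstacle.} Fix $\psi_K \in A(G)$ with $\psi_K \equiv 1$ on $K$ (Lemma~\ref{3L01}). For Borel $E \subseteq K$, the duality~\eqref{3AG} gives
\[
\int_E \phi \;=\; \langle \lambda(\phi \chi_E), \psi_K \rangle \;=\; \langle M_\phi(\lambda(\chi_E)), \psi_K \rangle \;=\; \langle wBJ(\lambda(\chi_E)), \psi_K \rangle.
\]
Dividing by $\mu(E)$ and shrinking $E$ to $s \in K$ along a Lebesgue basis, the left side tends to $\phi(s)$ at every Lebesgue point of $\phi$. The net $\lambda(\phi \chi_E)/\mu(E)$ is uniformly bounded in $VN(G)$ by $\|\phi\|_\infty$, so it has weak$^*$-cluster points $T_s \in VN(G)$. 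Pairing $T_s$ against any $\psi \in A(G) \subseteq C_b(G)$ and applying Lebesgue differentiation to $\phi \cdot \psi$ forces $\langle T_s, \psi \rangle = \phi(s) \psi(s)$; since $A(G)$ separates $VN(G)$, $T_s = \phi(s) \lambda(s)$ at every Lebesgue point. Defining $\Phi(s) := \langle wBJ(\lambda(s)), \psi_K \rangle$, I aim to show $\Phi$ is continuous on $K$ and equal to $\phi$ a.e. The technical crux is to verify that $s \mapsto wBJ(\lambda(s))$ is weak$^*$-continuous into $VN(G)$; using normality of $J$ and weak$^*$-continuity of $s \mapsto \lambda(s)$, this reduces to showing that $wB$ is bounded in $VN(G)$. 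That boundedness follows from $\lambda(\phi f_i) = wBJ(e_i)$ being uniformly bounded by $\|\phi\|_\infty$ together with $J(e_i) \to J(1) = 1$ strongly (Lemmas~\ref{2DenseRange} and~\ref{kernel}(2)), which forces the weak$^*$-cluster point $wB$ to lie in $VN(G)$.
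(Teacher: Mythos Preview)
Your arguments for (1) and (2) are sound, and in fact (1) differs from the paper's route in an interesting way: the paper builds an $L^1$-normalised approximate identity supported inside the zero set and then approximates every $\lambda(f)$ to force $\ker J = VN(G)$, whereas you exploit the two-sided ideal property of $\ker J$ directly and propagate the vanishing of $\phi$ by convolving $\chi_E$ with compactly supported functions. Your approach is arguably cleaner once one observes that $\chi_E \ast \chi_V$ is continuous, nonnegative, and has positive integral, hence is strictly positive on some open set (the ``density point'' language is unnecessary and in fact Lebesgue differentiation is not available on a general locally compact group).

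The genuine gap is in (3), at the step ``which forces the weak$^*$-cluster point $wB$ to lie in $VN(G)$.'' You have a bounded net $wBJ(e_i)$ and $J(e_i)\to 1$ strongly; but you cannot conclude that a weak$^*$-cluster point of $wBJ(e_i)$ equals $wB$ without already knowing that $B$ is bounded --- passing $B$ through a strong limit is exactly what fails for unbounded $B$. This circularity can be repaired: set $q_n=\chi_{[0,n]}(B)$, use that $q_n$ commutes with every $J(e_i)$ (Proposition~\ref{Yeadon}(c)) to write $wBJ(e_i)q_n=(wBq_n)J(e_i)$, and let $i\to\infty$ to get $\|wBq_n\|\le\|\phi\|_\infty$ for every $n$; since $w^*w=1$ this gives $\|Bq_n\|\le\|\phi\|_\infty$ uniformly, hence $B$ is bounded. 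A second issue is your appeal to Lebesgue differentiation of $\phi$ on $K$, which need not hold on an arbitrary locally compact group; this can be avoided once $wB$ is bounded by observing that normality of $J$ gives $J(\lambda(\chi_E))=\int_E J(\lambda(s))\,ds$ in the weak$^*$ sense, so the identity $\int_E\phi=\langle wBJ(\lambda(\chi_E)),\psi_K\rangle$ yields $\int_E\phi=\int_E\Phi$ for every Borel $E\subseteq K$, and hence $\phi=\Phi$ a.e.\ on $K$ directly.

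For comparison, the paper's proof of (3) is entirely different: it shows that $w^*M_\phi$ is positive, deduces that $s\mapsto\phi(s)(\lambda(s)\zeta\,|\,w\zeta)$ is positive definite for every $\zeta$, and concludes that $\phi\psi$ is (locally a.e.) continuous for every $\psi\in A(G)$; the local continuity of $\phi$ then follows from Lemma~\ref{3L01}. That route never needs to know $B$ is bounded.
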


\begin{proof}
(1) Let $K$ be a compact subset of $G$. Set $N_K(\phi):=\{s\in K:\phi(s)=0\}$. 
We show that $N_K(\phi)$ has measure zero. Assume on the contrary 
that $N_K(\phi)$ has positive measure. We show that there exists $a_0\in K$ 
such that for any open neighbourhood $V$ of $a_0$, $\mu(N_K(\phi)\cap V)>0$. 
Assume on the contrary that for any $a\in K$ there is an open neighbourhood of 
$a$, $V_a$, such that $\mu(N_K(\phi)\cap V_a)=0$. Since $K$ is 
compact and $\{V_a\}_{a\in K}$ covers $K$, there is a 
finite subcover $\{V_{a_j}\}_{j=1}^n$ that covers $K$ as well. Now, note that
$$
 \mu(N_K(\phi))=\mu(N_K(\phi)\cap(\cup_{j=1}^n V_{a_j})) =
\mu(\cup_{j=1}^n (N_K(\phi)\cap V_{a_j} ))\leq\sum_{j=1}^n
\mu(N_K(\phi)\cap V_{a_j})=0,
$$
which contradicts the fact that $\mu(N_K(\phi))>0$.

Let $(U_i)_{i\in I}$ be a net of neighbourhoods of $e$,
the unit element of $G$, 
directed by inclusion,
%and all contained in a compact neighbourhood of the identity, 
with $\cap_{i\in I}U_i=\{e\}$. Then 
for all $i\in I$, we
have $\mu(a_0 U_i\cap N_K(\phi))>0$ and we may define 
$$
h_i:=\frac{1}{\mu(a_0 U_i\cap N_K(\phi))}\chi_{U_i}
\chi_{N_K(\phi)}(a_0\,\cdotp).
$$ 
For any $i\in I$, we have that $h_i\geq0$, $\int h_i=1$, and 
$h_i\in L^1(G)\cap L^2(G)$. Moreover, 
$\text{supp}(h_i)=U_i\cap a_0^{-1}N_K(\phi)$ and therefore, 
$\bigcap_{i\in I}\text{supp}(h_i)=\{e\}$ and 
$\phi h_i(a_0^{-1}\,\cdotp)=0$, for all $i\in I$.

Let $(w,B,J)$ be the Yeadon triple of
$M_\phi \colon L^2(VN(G))\to L^2(VN(G))$.
For any $i\in I$, let $\varepsilon_i:=\lambda(h_i(a_0^{-1}\,\cdotp))$. 
We have that $\varepsilon_i\in L^2(VN(G))\cap VN(G)$ and $\|\varepsilon_i\|_{VN(G)}\leq1$. 
By Proposition \ref{Yeadon}(a), $M_\phi(\varepsilon_i)=wBJ(\varepsilon_i)$. 
Also, $M_\phi(\varepsilon_i)=\lambda(\phi h_i(a_0^{-1}\cdotp))=\lambda(0)=0$. 
Therefore, $wBJ(\varepsilon_i)=0$. We now apply Proposition \ref{Yeadon}(b).
Since $w^*w=s(B)$, we have $BJ(\varepsilon_i)=w^*wBJ(\varepsilon_i)$, hence
$BJ(\varepsilon_i)=0$. Further $0\leq J(\varepsilon_i)\leq J(1)=s(B)$, 
hence $J(\varepsilon_i)$ is
valued in $\ker(B)^\perp$. Hence the equality
$BJ(\varepsilon_i)=0$ implies that $J(\varepsilon_i)=0$, that is $\varepsilon_i\in \ker(J)$. 
By Lemma \ref{kernel}(1), this implies
that for any $g\in L^1(G)$ 
and any $i\in I$, we have 
$\lambda(g)\varepsilon_i\in\ker(J)$. 

Now, for any $f\in L^1(G)\cap L^2(G)$, let $g=f(\cdotp a_0)$. We have 
$$
\lambda(g)\varepsilon_i=\lambda(g\ast h_i(a_0^{-1}\,\cdotp))=
\lambda (g(\cdotp a_0^{-1} )\ast h_i)=
\lambda(f\ast h_i).
$$ 
Since $\|f\ast h_i-f\|_2\to 0$,
we have $\|\lambda(f\ast h_i)-
\lambda(f)\|_{L^2(VN(G))}\to 0$, by (\ref{2Plancherel}). 
Note that,
\begin{align*}
\| \lambda(f\ast h_i) \|_\infty
\leq \| f\ast h_i \|_1 
\leq \| f\Vert_1  \Vert h_i \|_1 = \| f\|_1.
\end{align*}
Hence by Lemma \ref{weak-star conv}, 
$\lambda(f\ast h_i)\to\lambda(f)$, 
in the weak$^*$-topology of $VN(G)$. Since
$\ker(J)$ is weak$^*$-closed, by Lemma \ref{kernel}(1), 
we obtain  that
$\lambda(f)$ belongs to $\ker(J)$.
Finally since the space 
$\{\lambda(f) \, :\, f\in L^1(G)\cap L^2(G)\}$ 
is weak$^*$-dense in $VN(G)$,
we deduce  that $\ker(J)=VN(G)$.
Hence $M_\phi=0$, which is a contradiction. 

\smallskip
(2) Consider any $F\in C_c(G)\ast C_c(G)$. This function 
has 
compact support, say $K\subseteq G$.
Set $N_{\delta}=\{s\in K:\vert\phi(s)\vert<\delta\}$, 
for all $\delta>0$. By part (1), 
$\phi\neq0$ almost everywhere on $K$, 
hence we have $\lim_{\delta\to0}\mu(N_{\delta})=0$. 
For any $\delta>0$, set $\displaystyle{g_\delta:=\phi^{-1}
\chi_{N_\delta^c}F}$. Since $\phi^{-1}
\chi_{_{{N_{\delta}^c}}}\in L^\infty(G)$, we have 
$g_{\delta}\in L^1(G)\cap L^2(G)$. Hence, $\lambda(g_\delta)$ 
is well-defined, $\lambda(g_\delta)\in L^2(VN(G))\cap VN(G)$ 
and 
$$
M_\phi(\lambda(g_\delta))=\int\phi(t)g_{\delta}(t)\lambda(t)dt=\lambda(\chi_{N_\delta^c}F).
$$ 
Therefore, $\lambda(F)-M_{\phi}(\lambda(g_\delta))=
\lambda(\chi_{N_\delta}F)$, and we have that
$$
\|\lambda(F)-M_{\phi}(\lambda(g_\delta))\|_2 =
\|\lambda(\chi_{N_\delta}F)\|_2
=\|\chi_{N_\delta}F\|_2,
$$
by \eqref{2Plancherel}.
Since $\mu(N_{\delta})\to 0$, when $\delta\to0$, 
this implies that 
$\lim_{\delta\to0}\|\lambda(F)-M_{\phi}(\lambda(g_\delta))\|_2=0$. 
Hence, $\lambda(F)\in\overline{{\rm ran}(M_\phi)}$. 
By density of $\lambda(C_c(G)\ast C_c(G))$ in $L^2(VN(G))$, 
this implies that the range of $M_\phi$ is dense in $L^2(VN(G))$.

\smallskip
(3) Let $w^*M_\phi$ be the operator taking any 
$a\in L^2(VN(G))$ to $w^*M_\phi(a)$. According to 
\cite[Remark 4.2]{LZ2}, $w^*M_\phi$ is positive, 
that is, it maps $L^2(VN(G))^+$ into itself. We use a 
modification of the argument in \cite[Lemma 6.10]{AK}. 
For all $g\in C_c(G)$, $\lambda(g^*\ast g)\in L^2(VN(G))^+$.
Hence, $w^*M_\phi(\lambda(g^*\ast g))\in L^2(VN(G))^+$. Consequently, 
$$
\left(M_\phi(\lambda(g^*\ast g))\zeta\vert w(\zeta)\right)\geq 0,
$$
for all $\zeta\in L^2(G)$. The calculation in the proof of 
\cite[Lemma 6.10]{AK} shows that
%\begin{align}
%\label{AK1}
$$
\left(M_\phi(\lambda(g^*\ast g))\zeta\vert w(\zeta)\right)
=\int_G\int_G\overline{g(t)}g(s)\phi(t^{-1}s)
\left(\lambda(t^{-1}s)\zeta\vert w(\zeta)\right)
\, ds dt.
$$
%\end{align}
This implies that for all $\zeta\in L^2(G)$,
the function $s\mapsto\phi(s)\left(\lambda(s)\zeta\vert 
w(\zeta)\right)$ is positive definite in the sense of 
\cite[Definition VII.3.20]{Tak2}.

By polarization, this implies that  for all 
$\zeta,\zeta'\in L^2(G)$,
$s\mapsto\phi(s)\left(\lambda(s)\zeta\vert w(\zeta')\right)$
is a linear combination of positive definite functions.
By part (2) and Lemma \ref{2DenseRange}, $w$ is a unitary.
Hence the above actually shows that for all $\zeta,\eta\in L^2(G)$, 
$s\mapsto\phi(s)\left(\lambda(s)\zeta\vert\eta\right)$ 
is a linear combination of positive  definite functions. 
By the definition of $A(G)$, this means that for all 
$\psi\in A(G)$, $\phi\psi$
is a linear combination of positive definite functions. By 
\cite[Corollary 3.22]{Tak2} and its proof,
this implies that for all $\psi\in A(G)$, 
$\phi\psi$ is locally almost everywhere equal to a continuous function. 

Let $K\subseteq G$ be a compact set with $\mu(K)>0$. 
By Lemma \ref{3L01}, there is $\psi\in A(G)$ such that $\psi\vert_K>0$. 
Now, $\left(\phi\psi\right)\vert_K$ is almost everywhere equal to 
a continuous function. Hence, 
$\phi\vert_K$ is almost everywhere equal to 
a continuous function.
\end{proof}

\begin{remark}\label{3Sigma}
Note that if $G$ is $\sigma$-compact, the above lemma implies that 
if $\phi\in L^\infty(G)$ is such that
$M_\phi \colon L^2(VN(G))\to L^2(VN(G))$ is separating, then
$\phi$ is almost everywhere equal to a continuous function. 
\end{remark}

In the next statement, we use the net $(e_i)_{i\in I}$ defined
before Lemma \ref{ei}.

\begin{lemma}\label{3L2}
Let $\phi\in C_b(G)$ and consider the Fourier multiplier
$M_\phi \colon L^2(VN(G))\to L^2(VN(G))$. We have the following convergences in the strong operator topology of $VN(G)$.
\begin{itemize}
\item [(1)] For all $s\in G$, $M_\phi(\lambda(s)e_i)
\xrightarrow[i\to\infty]{} \phi(s)\lambda(s)$.
%\item [(2)] For all $s\in G$, $M_\phi(e_i\lambda(s)) \xrightarrow[i\to\infty]{} \phi(s)\lambda(s)$.
\item [(2)] For all $s\in G$, $M_\phi(\lambda(s)e_i^2)
\xrightarrow[i\to\infty]{} \phi(s)\lambda(s)$.
\item [(3)] For all $s\in G$, $M_\phi(e_i\lambda(s)e_i)
\xrightarrow[i\to\infty]{} \phi(s)\lambda(s)$.
\end{itemize}
\end{lemma}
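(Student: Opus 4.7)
The plan is to prove all three items in parallel, since each one reduces to the same core computation: rewriting the product as $\lambda(g_i)$ for a suitable non-negative $g_i \in C_c(G)\ast C_c(G)$ whose support shrinks to $\{s\}$, and then bounding $\bignorm{M_\phi(\lambda(g_i)) - \phi(s)\lambda(g_i)}$ via the continuity of $\phi$ at $s$.

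First I would rewrite each product. Using unimodularity of $G$ together with the convolution formula for $\lambda$, one finds $\lambda(s) e_i = \lambda(L_s f_i)$ in case (1), $\lambda(s) e_i^2 = \lambda\bigl((L_s f_i) \ast f_i\bigr)$ in case (2), and $e_i \lambda(s) e_i = \lambda\bigl((R_{s^{-1}} f_i) \ast f_i\bigr)$ in case (3), where $L_s h(t) = h(s^{-1}t)$ and $R_{s^{-1}} h(t) = h(ts^{-1})$. Each resulting function $g_i$ lies in $C_c(G) \ast C_c(G)$ (since translations and convolutions preserve this space), is non-negative with $\|g_i\|_1 = 1$, and has support contained respectively in $sV_i$, $sV_iV_i$, and $V_i s V_i$. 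By joint continuity of the group multiplication, all three supports shrink to $\{s\}$ as $V_i$ shrinks to $\{e\}$.

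Next I would apply the Fourier multiplier formula $M_\phi(\lambda(g_i)) = \lambda(\phi g_i)$ together with the standard contractive bound $\|\lambda(h)\|_{VN(G)} \leq \|h\|_{L^1(G)}$ to get
\[
\bignorm{M_\phi(\lambda(g_i)) - \phi(s)\lambda(g_i)}_{VN(G)}
= \bignorm{\lambda\bigl((\phi - \phi(s)) g_i\bigr)}_{VN(G)}
\leq \bignorm{(\phi - \phi(s)) g_i}_{L^1(G)}.
\]
Given $\epsilon > 0$, continuity of $\phi$ at $s$ provides a neighbourhood $U$ of $s$ on which $|\phi - \phi(s)| < \epsilon$, and for large $i$ the support of $g_i$ is contained in $U$. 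Hence the right-hand side is at most $\epsilon$ eventually, so $M_\phi(\lambda(g_i)) - \phi(s)\lambda(g_i) \to 0$ in operator norm, and in particular in the strong operator topology.

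Finally I would verify that $\phi(s)\lambda(g_i) \to \phi(s)\lambda(s)$ in the strong operator topology. For (1) this is immediate from Lemma \ref{ei} and SOT-continuity of left multiplication by the bounded operator $\lambda(s)$. For (2) and (3), boundedness of $(e_i)$ together with $e_i \to 1$ and $e_i^* \to 1$ in SOT, combined with SOT-continuity of multiplication on bounded sets, yields $\lambda(s) e_i^2 \to \lambda(s)$ and $e_i \lambda(s) e_i \to \lambda(s)$ in SOT. Combining with the previous paragraph proves each of the three claims. The most delicate point is the two-sided support shrinkage $V_i s V_i \to \{s\}$ appearing in (3), which follows from joint continuity of group multiplication at $(e,s,e) \in G^3$.
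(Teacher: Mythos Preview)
Your proof is correct and follows essentially the same approach as the paper: show that $M_\phi(\lambda(g_i))-\phi(s)\lambda(g_i)\to 0$ in operator norm via continuity of $\phi$ and the shrinking supports, and combine with the SOT convergence $\lambda(g_i)\to\lambda(s)$ from Lemma~\ref{ei}. The only cosmetic difference is that the paper writes the estimate directly as $\int_G|\phi(st)-\phi(s)|f_i(t)\,dt$ (and the analogous double integral $\int_G\int_G|\phi(rst)-\phi(s)|f_i(r)f_i(t)\,dr\,dt$ in case (3)), whereas you first package each product as $\lambda(g_i)$ for a single function $g_i$ with support shrinking to $\{s\}$ and then invoke $\norm{\lambda(h)}\leq\norm{h}_1$; after a change of variable these are the same computation.
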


\begin{proof}
(1) By Lemma \ref{ei},  
it is enough to show that for all $s\in G$,
$M_{\phi}(\lambda(s)e_i)-\phi(s)\lambda(s)e_i$ 
converges to $0$ in $VN(G)$. 
Using the assumptions on $(f_i)_{i\in I}$,
and the fact that $\phi$ is continuous,
we have that
\begin{align*}
\big\|M_{\phi}(\lambda(s)e_i)-\phi(s)\lambda(s)e_i\big\|_\infty
&=\Big\|\int_G\left(\phi(st)f_i(t)\lambda(st)-\phi(s)
f_i(t)\lambda(st)\right)\,dt\Big\|_\infty\\
&\leq \int_G \lvert\phi(st)-\phi(s) \rvert f_i(t)
\, dt\xrightarrow[i\to\infty]{}0.
\end{align*}
This proves the result.

\smallskip
(2) For all $i\in I$, we have 
$e_i^2=\lambda(f_i \ast f_i)$, $f_i \ast f_i\geq 0$, 
$\int_G (f_i \ast f_i) (s) \,d(s) = 1$, and 
${\rm supp}(f_i \ast f_i) 
\subseteq {\rm supp}(f_i) 
\cdot {\rm supp}(f_i)$. Hence 
using $f_i \ast f_i$ instead of $f_i$,
the convergence in (2) can be
shown exactly in the same way as in (1).

\smallskip
(3) We argue as in (1). Let $s\in G$.
Since $e_i\to 1$ in the strong operator topology and 
$(e_i)_{i\in I}$ is bounded, $e_i\lambda(s)e_i\to\lambda(s)$
in the strong operator topology. Hence 
we only need to show that 
$M_\phi(e_i \lambda(s) e_i) - 
\phi(s) e_i \lambda(s) e_i$ converges 
to $0$ in $VN(G)$.
We have that
\begin{align*}
&\bignorm{M_\phi(e_i \lambda(s) e_i) - 
\phi(s) e_i \lambda(s) e_i}_{\infty} \\
& = \Bignorm{\int_G \int_G \phi(rst) f_i(r)f_i(t) \lambda(rst) 
\, dr dt- \int_G \int_G \phi(s) f_i(r)f_i(t) 
\lambda(rst) \,dr dt}_\infty \\
& \leq \int_G \int_G | \phi(rst) - \phi(s)| f_i(r) 
f_i(t) dr dt \xrightarrow[i \to \infty]{} 0,
\end{align*}
again using the assumptions on $(f_i)_{i\in I}$,
and the fact that $\phi$ is continuous.
\end{proof}

\begin{proof}[Proof of Theorem \ref{3Main}]
\

\noindent
\underline{$(ii)\,\Rightarrow\,(i)$:} 
We may assume that $c=1$
and that $\phi=\psi$, i.e. $\phi$ is a continuous character.
Any character is positive definite and maps $e$ to $1$. Hence 
according to \cite[Proposition 4.2]{DCH} and
\cite[Proposition 3.6]{Pau02},
$M_\phi$ is a bounded Fourier multiplier on $VN(G)$, with
$\norm{M_\phi\colon VN(G) \to VN(G)} = 
\norm{M_\phi(1)}_{\infty} = |\phi(e)|=1$.
According to \cite[Lemma 6.4 and Lemma 6.6]{AK}, 
$M_\phi$ is therefore 
a bounded Fourier multiplier on $L^p(VN(G))$ and
$$
\norm{M_\phi\colon L^p(VN(G)) \longrightarrow L^p(VN(G))} 
\leq \norm{M_\phi\colon VN(G) \longrightarrow  VN(G)}.
$$
Thus, 
\begin{equation}\label{norm1}
\norm{M_\phi\colon L^p(VN(G)) \longrightarrow L^p(VN(G))} \leq 1.
\end{equation} 
Since $\phi^{-1}$ is also a continuous character, 
the above argument shows as well that
\begin{equation}\label{norm2}
\norm{M_{\phi^{-1}}\colon L^p(VN(G)) \longrightarrow L^p(VN(G))} \leq 1. 
\end{equation} 
For any $f\in C_c(G) \ast C_c(G)$, we have
\[ M_{\phi^{-1}} M_{\phi}(\lambda(f)) = 
M_{\phi^{-1}}(\lambda(\phi f)) = \lambda(\phi^{-1} \phi f) = \lambda(f) .\]
Similarly, $M_\phi M_{\phi^{-1}}(\lambda(f)) = \lambda(f)$.
Hence $M_\phi$ and $M_{\phi^{-1}}$ are inverse to each other.
It therefore follows from (\ref{norm1}) and 
(\ref{norm2}) that $M_\phi\colon L^p(VN(G)) \to L^p(VN(G))$
is an isometry.

If $p\not=2$, 
it follows from \cite{Yeadon} (see also \cite{LZ})
that $M_\phi\colon L^p(VN(G)) \to L^p(VN(G))$ is separating.
If $p=2$, consider any $1<q\not=2<\infty$. The above reasoning
shows that $M_\phi\colon L^q(VN(G)) \to L^q(VN(G))$ is separating.
Applying Lemma \ref{2Comp}, we deduce that the operator 
$M_\phi\colon L^2(VN(G)) \to L^2(VN(G))$ is separating.

\smallskip
\noindent
\underline{$(i)\,\Rightarrow\,(ii)$:} We assume that
$M_{\phi}\colon L^p(VN(G))\to L^p(VN(G))$ is separating. By
Lemma \ref{2Comp}, $M_{\phi}\colon L^2(VN(G))\to L^2(VN(G))$ is separating
as well. 
Let $(w,B,J)$ be its Yeadon triple. We may
assume that $M_{\phi}$ is non-zero. Then
by Lemma \ref{3L1}(2), $M_{\phi}\colon L^2(VN(G))\to L^2(VN(G))$
has dense range. 
It then follows from Lemma \ref{2DenseRange} that $w$ 
is a unitary and $J(1)=1$. 
According to Remark \ref{3Sigma},
$\phi$ is almost everywhere equal to a continuous function. 
Replacing $\phi$ by this function, we may now assume that $\phi\in C_b(G)$.

%Let us show that $\phi(e)\neq0$. 
For any $s\in G$ we have, by Lemma \ref{3L2} 
and Lemma \ref{commuting}(1),
\begin{align*}
\phi(s)\left(\lambda(s)+\lambda(s)\right)
&=\lim_{j\to\infty} M_{\phi}\left(e_j\lambda(s) 
e_j+\lambda(s) e_j^2\right)\\
&=\lim_{j\to \infty}
wBJ\left(e_j \lambda(s) e_j+\lambda(s) e_j^2\right)\\
&=\lim_{j\to\infty} wB\bigl(J(e_j)J(\lambda(s)e_j)
+J(\lambda(s)e_j)J(e_j)\bigr)\\
&=\lim_{j\to\infty} 
\bigl(M_{\phi}(e_j)
J(\lambda(s)e_j)+M_{\phi}(\lambda(s) e_j) J(e_j)\bigr),
%&=\phi(e)J(\lambda(s))+\phi(s) \lambda(s) J(1)\\
%& =\phi(s)J(\lambda(s))+\phi(s)\lambda(s)\bigr),
\end{align*}
where the limit is taken in the strong operator topology.

By Lemma \ref{ei}, $\lambda(s)e_j\to \lambda(s)$ and
$(\lambda(s)e_j)^*\to \lambda(s)^*$ strongly. 
Hence by Lemma \ref{kernel}(2),
$J(\lambda(s)e_j)\to J(\lambda(s))$ 
strongly. By Lemma \ref{3L2}(1),
$M_{\phi}(e_j)\to \phi(e)$ strongly. Since 
$(M_{\phi}(e_j))_{j\in I}$ is bounded, we deduce that 
$M_{\phi}(e_j)
J(\lambda(s)e_j)\to\phi(e) J(\lambda(s))$ strongly.
Similarly, $M_{\phi}(\lambda(s) e_j) J(e_j)\to 
\phi(s) \lambda(s) J(1) =\phi(s) \lambda(s)$
strongly. It therefore follows from the previous calculation
that 
$\phi(s)\left(\lambda(s)+\lambda(s)\right)=
\phi(e)J(\lambda(s))+\phi(s)\lambda(s)$, that is,
$\phi(s)\lambda(s)=\phi(e)J(\lambda(s))$.

Since $\phi$ is non-zero, this implies that $\phi(e)\not=0$.
Set $\psi:=\phi(e)^{-1}\phi$. It follows from the above that
\begin{equation}\label{J}
J(\lambda(s))=\psi(s)\lambda(s),\qquad s\in G.
\end{equation}
We now show that $\psi$ is a character. 
Let $s,t\in G$ and recall that $\lambda(st)=
\lambda(s)\lambda(t)$. On the one hand, we have that $J(\lambda(st))=\psi(st)\lambda(st)$.
On the other hand, we have that
$J(\lambda(s))J(\lambda(t))=\psi(s)\psi(t)\lambda(st)$.

If $st=ts$, then $\lambda(s)\lambda(t)=\lambda(t)\lambda(s)$, 
hence by Lemma \ref{commuting}(2), we have that 
$J(\lambda(st))= J(\lambda(s)\lambda(t))=J(\lambda(s))J(\lambda(t))$. 
Hence, $\psi(st)=\psi(s)\psi(t)$. 
Assume now that $st\neq ts$. 
By Lemma \ref{commuting}(1), we have 
$$
J(\lambda(s)\lambda(t)+\lambda(t)\lambda(s))=J(\lambda(s))J(\lambda(t))+
J(\lambda(t))J(\lambda(s)).
$$ 
Therefore,
$$
\psi(st)\lambda(st)+\psi(ts)\lambda(ts)
=\psi(s)\psi(t)\lambda(st)+\psi(t)\psi(s)\lambda(ts).
$$ 
Since $\lambda(st)$ and $\lambda(ts)$ are linearly independent, 
the above identity implies that $\psi(st)=\psi(s)\psi(t)$. 
This proves that $\psi$ is a character and therefore, 
$\phi=\phi(e)\psi$ is a scalar multiple of a
character, as requested.
\end{proof}

Let us now give 
a variant of Theorem \ref{3Main} in the general 
case when $G$ is not assumed to be $\sigma$-compact
(see also Remark \ref{3Counter}).
We need the following lemma.

\begin{lemma}\label{lem-locally-ae-compact}
Let $h_1,h_2\colon G\to\Cdb$ be two locally measurable functions. 
The functions $h_1$ and $h_2$ are locally almost everywhere equal 
if and only if for any compact set 
$K\subseteq G$, $h_1\vert_K = h_2\vert_K$, almost everywhere.  
\end{lemma}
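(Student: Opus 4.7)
Let $N=\{s\in G\,:\,h_1(s)\neq h_2(s)\}$. Since $h_1,h_2$ are locally measurable, $N$ is locally Borel.

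\textbf{Forward direction.} Assume $h_1 = h_2$ locally almost everywhere, so $N$ is locally null. Fix a compact $K\subseteq G$. Then $K$ is Borel with $\mu(K)<\infty$, and therefore, by the definition of a locally null set, $\mu(N\cap K)=0$. This exactly says that $h_1\vert_K=h_2\vert_K$ almost everywhere.

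\textbf{Backward direction.} Assume $h_1\vert_K=h_2\vert_K$ almost everywhere for every compact $K\subseteq G$, that is, $\mu(N\cap K)=0$ for every compact $K$. To show that $N$ is locally null, fix a Borel set $F\subseteq G$ with $\mu(F)<\infty$; the goal is to show $\mu(N\cap F)=0$. The main tool is inner regularity of the left Haar measure. Since $\mu$ is a Radon measure on $G$ and $F$ has finite (hence $\sigma$-finite) measure, $\mu$ is inner regular on $F$ (see e.g.\ \cite[Theorem 7.8]{F}). Thus, for each integer $n\geq1$, there exists a compact set $K_n\subseteq F$ with $\mu(F\setminus K_n)<\frac{1}{n}$. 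Set $K_\infty:=\bigcup_{n\geq1}K_n\subseteq F$; then $\mu(F\setminus K_\infty)=0$.

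\textbf{Conclusion.} By hypothesis, $\mu(N\cap K_n)=0$ for every $n$, so by countable subadditivity $\mu(N\cap K_\infty)\leq\sum_{n\geq1}\mu(N\cap K_n)=0$. Writing $N\cap F\subseteq(N\cap K_\infty)\cup(F\setminus K_\infty)$, we deduce
\[
\mu(N\cap F)\leq \mu(N\cap K_\infty)+\mu(F\setminus K_\infty)=0.
\]
Since $F$ was arbitrary, $N$ is locally null, so $h_1=h_2$ locally almost everywhere.

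The only delicate point is the invocation of inner regularity of the Haar measure on Borel sets of finite (equivalently $\sigma$-finite) measure; once this is granted, the argument is a routine exhaustion by a countable union of compact sets.
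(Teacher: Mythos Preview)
Your proof is correct, but it takes a different route from the paper's argument. The paper invokes the structural fact (from \cite[Proposition 2.4]{F}) that $G$ admits an open, closed, $\sigma$-compact subgroup $G_0$, and then uses the characterization from \cite[Section 2.3]{F} that a locally Borel set $E$ is locally null if and only if $\mu(E\cap yG_0)=0$ for every coset $yG_0$; since each $yG_0$ is $\sigma$-compact, the hypothesis on compact sets immediately yields this. Your argument bypasses this structure theorem entirely and instead appeals to inner regularity of the Haar measure on Borel sets of finite measure, exhausting an arbitrary finite-measure Borel set $F$ from within by a countable union of compacts. This is arguably more elementary and more transparent, since it uses only a general measure-theoretic property of Radon measures rather than a group-theoretic decomposition. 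One minor caution: your citation of \cite[Theorem 7.8]{F} points to Folland's \emph{abstract harmonic analysis} text (which is what \cite{F} denotes in this paper), whereas the inner-regularity result you have in mind is presumably from his \emph{Real Analysis}; the fact itself is standard and correct, but the reference should be adjusted.
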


\begin{proof}
It is enough to show that if $E\subset G$ is locally Borel, 
then $E$ is locally null if (and only if) $E\cap K$ has measure $0$ 
for any compact set $K\subseteq G$. Assume this property. 
By \cite[Proposition 2.4]{F}, $G$ has an open, 
closed and $\sigma$-compact subgroup, $G_0$. 
Let $Y$ be a subset of $G$ that contains exactly one element 
of each of the left cosets of $G_0$. Set $E_y := E\cap yG_0$ 
for any $y\in Y$. Since $G_0$ is $\sigma$-compact, $yG_0$ is $\sigma$-compact as well.  It then follows that $\mu(E_y)=0$, for all $y\in Y$. Recall from the end of \cite[Section 2.3]{F} 
that $E$ is locally null if and only if $\mu(E_y)=0$, 
for every $y\in Y$. Hence, $E$ is locally null.
\end{proof}

\iffalse

\begin{lemma}\label{lem-Fm-determine-symbol-locally-ae}
Let $1 \leq p \leq \infty$ and $\phi,\psi \in L^\infty(G)$ 
such that $M_\phi,M_\psi \colon L^p(VN(G)) \to L^p(VN(G))$ are bounded Fourier multipliers.
Then $M_\phi = M_\psi$ if and only if $\phi$ and $\psi$ agree locally almost everywhere.
\end{lemma}

\begin{proof}
By density, we have $M_\phi = M_\psi$ if and only if 
$\lambda(\phi f) = \lambda(\psi f)$ for any $f \in C_c(G) \ast C_c(G)$ (case $p <\infty$) resp. 
for any $f \in C_c(G)$ (case $p = \infty$) (see also the beginning of the section), 
if and only if $\phi f$ and $\psi f$ agree as elements of $L^1(G)$ for all such $f$ (as the mapping $\lambda : L^1(G) \to VN(G)$ is injective), if and only if $\phi f = \psi f$ almost everywhere for all such $f$.
If this is true, then according to the second part of Lemma \ref{3L01}, $\phi$ and $\psi$ agree almost everywhere on any compact $K$.
Then according to Lemma \ref{lem-locally-ae-compact}, $\phi$ and $\psi$ agree locally almost everywhere.

On the other hand, if $\phi$ and $\psi$ agree locally almost everywhere and $f$ belongs to $C_c(G)$, then with the compact $K = \text{supp }f$, we have
$$
\{s \in G \colon \phi(s) f(s) \neq \psi(s) f(s) \}\subseteq \{s \in G \colon \phi(s) \neq \psi(s) \} \cap K,$$
which is of measure $0$.
Thus, $\phi f = \psi f$ almost everywhere, and therefore, by the above, $M_\phi = M_\psi$.
\end{proof}

\fi 

\begin{corollary}\label{3Non}
Let $G$ be locally compact unimodular.
Let $1\leq p<\infty$ and let $\phi\in L^\infty(G)$. The following are equivalent.
\begin{itemize}
\item [(i)] The mapping $M_\phi$ is a bounded Fourier multiplier on $L^p(VN(G))$, and the operator $M_\phi \colon L^p(VN(G))\to L^p(VN(G))$ is separating.
\item[(ii)] There exist a constant $c_0\in\Cdb$ and a continuous character $\psi\colon G\to\Tdb$ such that $\phi=c_0\psi$ locally almost everywhere.
\end{itemize}
\end{corollary}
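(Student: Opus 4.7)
The plan is to deduce Corollary \ref{3Non} from Theorem \ref{3Main} by observing that the hypothesis of $\sigma$-compactness enters the latter only through Remark \ref{3Sigma}, whose role is to reduce $\phi$ to a continuous representative. In the general locally compact unimodular setting, the same reduction can be achieved by Lemma \ref{3L1}(3), which is valid without any countability assumption, combined with a gluing procedure and Lemma \ref{lem-locally-ae-compact}.

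For the direction $(ii) \Rightarrow (i)$, I would run the argument of Theorem \ref{3Main} verbatim, since the chain of references---\cite[Proposition 4.2]{DCH}, \cite[Proposition 3.6]{Pau02}, \cite[Lemma 6.4 and Lemma 6.6]{AK}, Yeadon's theorem, and Lemma \ref{2Comp}---nowhere uses $\sigma$-compactness. Here I would note that for any $f \in C_c(G) \ast C_c(G)$ the support of $f$ is compact, hence $\sigma$-finite, so the locally a.e.\ equality $\phi = c_0\psi$ forces $\phi f = c_0 \psi f$ almost everywhere and therefore $M_\phi = c_0 M_\psi$ on the dense subspace $\lambda(C_c(G) \ast C_c(G))$; the isometric/separating properties of $M_\psi$ established in Theorem \ref{3Main} then transfer to $M_\phi$.

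For $(i) \Rightarrow (ii)$, after reducing to $p = 2$ via Lemma \ref{2Comp} and assuming $M_\phi \neq 0$, I would apply Lemma \ref{3L1}(3) to obtain, for every compact $K \subseteq G$, a continuous function $\Phi_K \colon K \to \Cdb$ with $\phi|_K = \Phi_K$ almost everywhere. The key step is to glue these into a single continuous function $\widetilde\phi \colon G \to \Cdb$ agreeing with $\phi$ locally almost everywhere. I would cover $G$ by relatively compact open sets $U_\alpha$ and observe that the continuous restrictions $\Phi_{\overline{U_\alpha}}|_{U_\alpha}$ and $\Phi_{\overline{U_\beta}}|_{U_\beta}$ agree almost everywhere on the open set $U_\alpha \cap U_\beta$; since the Haar measure has full support on nonempty open sets, this a.e.\ equality upgrades to pointwise equality, making the definition $\widetilde\phi(s) := \Phi_{\overline{U_\alpha}}(s)$ (for any $\alpha$ with $s \in U_\alpha$) unambiguous and continuous. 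Lemma \ref{lem-locally-ae-compact} then yields $\widetilde\phi = \phi$ locally a.e., and since $\widetilde\phi f = \phi f$ almost everywhere for every $f \in C_c(G)\ast C_c(G)$, we have $M_{\widetilde\phi} = M_\phi$ on the dense subspace and hence as bounded operators. Replacing $\phi$ by $\widetilde\phi \in C_b(G)$, the remainder of the proof of Theorem \ref{3Main}---the approximate-identity computation combining Lemmas \ref{3L2}, \ref{commuting}, \ref{kernel}(2), \ref{2DenseRange} and Proposition \ref{Yeadon}---applies without modification and produces $c_0 = \phi(e) \in \Cdb$ and a continuous character $\psi \colon G \to \Tdb$ with $\widetilde\phi = c_0 \psi$ on $G$, hence $\phi = c_0\psi$ locally almost everywhere.

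The main obstacle is the gluing step: the existence of continuous representatives on each compactum is handed to us by Lemma \ref{3L1}(3), but combining them into one globally defined continuous function relies crucially on the fact that Haar measure is strictly positive on nonempty open sets, which is what allows ``almost everywhere equality'' on open overlaps to be upgraded to genuine pointwise equality.
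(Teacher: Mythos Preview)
Your proposal is correct. Both directions work as you describe, and the gluing argument via full support of Haar measure on open sets is sound; you should also note that $\widetilde\phi$ is bounded (since $|\Phi_{\overline{U_\alpha}}|\leq\norm{\phi}_\infty$ a.e.\ on $\overline{U_\alpha}$, hence everywhere by continuity), so that $\widetilde\phi\in C_b(G)$ and Lemma~\ref{3L2} applies.

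Your route differs from the paper's in how the global continuous representative is produced. You glue the local pieces $\Phi_K$ from Lemma~\ref{3L1}(3) directly, using that continuous functions agreeing a.e.\ on an open set agree pointwise. The paper instead runs the approximate-identity computation \emph{locally}: fixing a compact neighbourhood $K_0$ containing all the $\text{supp}(f_i)$, for each compact $L$ it applies Lemma~\ref{3L1}(3) to $K=K_0LK_0K_0$ and obtains $c_0 J(\lambda(s))=\Phi(s)\lambda(s)$ for $s\in L$. This shows $J(\lambda(s))\in\text{Span}\{\lambda(s)\}$ for every $s$, so one can define $F\colon G\to\Cdb$ by $J(\lambda(s))=F(s)\lambda(s)$; continuity of $F$ then follows because $F|_L=c_0^{-1}\Phi|_L$ for each $L$. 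Your approach is more elementary and front-loads the reduction to $C_b(G)$, after which Theorem~\ref{3Main} applies verbatim; the paper's approach is more operator-theoretic, letting the Jordan homomorphism $J$ itself serve as the glue and extracting continuity from it.
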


\begin{proof} The proof of the implication 
``$(ii)\,\Rightarrow\,(i)$" in Theorem \ref{3Main}
applies to the non $\sigma$-compact case, so we only
need to prove that (i) implies (ii).

Assume that
$M_{\phi}\colon L^p(VN(G))\to L^p(VN(G))$ is separating.
As in the proof of Theorem \ref{3Main}, we may assume that $p=2$
and that $M_\phi$ is non-zero.
We claim that
\begin{equation}\label{claim}
\exists\varphi\in C_b(G)\ \text{such that}\ \varphi=\phi, 
\text{ locally almost everywhere.}
\end{equation}
To prove this, first note that we may assume that the net $(f_i)_{i\in I}$ 
defined prior to Lemma \ref{ei} has the following property: 
there exists a compact neighbourhood $K_0$ of the unit 
$e$ such that for all $i$, ${\rm supp}(f_i)\subseteq K_0$. 
Let $L\subseteq G$ be compact. Let 
$$
K=K_0LK_0K_0=\{strq:(s,t,r,q)\in K_0\times L\times K_0\times K_0\}.
$$
This is a compact set hence by Lemma \ref{3L1}(3) 
there is a continuous function $\Phi:K\to\mathbb{C}$ 
such that $\Phi=\phi\vert_K$ almost everywhere. 
The proof of Lemma \ref{3L2} shows that for all $s\in L$, 
we have the following convergences in the strong operator topology:
\begin{equation}\label{conv3L8}
M_\phi(\lambda(s)e_i)\to\Phi(s)\lambda(s), \ M_\phi(\lambda(s)e_i^2)\to\Phi(s)\lambda(s),
\quad\hbox{and}\quad
M_\phi(e_i\lambda(s)e_i)\to \Phi(s)\lambda(s).
\end{equation}
In particular, (take $L=\{e\}$) we obtain the existence of $c_0\in\mathbb{C}$ such that 
$$
M_\phi(e_i)\to c_0,$$
in the strong operator topology.

Let $(w,B,J)$ be the Yeadon triple of $M_\phi$. The argument 
in the proof of Theorem \ref{3Main} and the convergence 
properties \eqref{conv3L8} show that for any $L,K,\Phi$ as above we have 
\begin{equation}\label{identity}
c_0J(\lambda(s))=\Phi(s)\lambda(s),\quad\text{for all }s\in L.
\end{equation}
By Lemma \ref{3L1}(1), this implies that $c_0\neq0$.

It follows from the above that for all $s\in G$, $J(\lambda(s))$ 
is proportional to $\lambda(s)$. We therefore have a necessarily unique 
$$
F\colon G\to\mathbb{C};\quad J(\lambda(s))=F(s)\lambda(s), \quad\text{for all }s\in G.
$$
Moreover,  for any $L,K,\Phi$ as above, we have
$$
F\vert_L = c_0^{-1}\Phi\vert_L.
$$
This implies that $F$ is continuous. To prove this, 
fix $s_0\in G$ and apply the above with a compact neighbourhood $L$ of $s_0$. 
Then the continuity of $\Phi\colon K\to\Cdb$
implies the continuity of $F\vert_L$, and hence the continuity of $F$ at $s_0$.

Set $\varphi:=c_0 F$. Again for $L,K,\Phi$ as above we obtain 
that $\varphi\vert_K=\phi\vert_K$, almost everywhere. By Lemma \ref{3L1}, 
this implies that $\varphi=\phi$, locally almost everywhere.
Hence \eqref{claim} is proved.

Since $M_\varphi=M_\phi$, the argument at the end of 
the proof of Theorem \ref{3Main} shows that $\psi:=
c_0^{-1}\varphi$ is a character. 
\end{proof}

\begin{remark}\label{OntoIso}
It follows from the proof of the implication ``$(ii)\,\Rightarrow\,(i)$" 
in Theorem \ref{3Main} that for any continuous character $\psi\colon G\to\Tdb$ and
any $1\leq p<\infty$, the Fourier multiplier
$M_\psi\colon L^p(VN(G))\to L^p(VN(G))$ is an onto isometry.
It therefore follows from Corollary \ref{3Non} that
if $\phi\in L^\infty(G)\setminus\{0\}$ is such that
$M_\phi\colon L^p(VN(G))\to L^p(VN(G))$ is bounded and separating,
then $\norm{M_\phi}^{-1}M_\phi$ is an onto isometry.
\end{remark}

\begin{corollary}\label{3Iso}
Let $1\leq p\not=2 <\infty$ and let 
$\phi\in L^\infty(G)$. The following are equivalent.
\begin{itemize}
\item [(i)] The mapping $M_\phi$ is a bounded Fourier multiplier on $L^p(VN(G))$, and the operator $M_\phi \colon L^p(VN(G))\to L^p(VN(G))$ is an isometry.
\item[(ii)] There exists $\delta\in \Tdb$
such that $\delta\phi$ is locally almost everywhere 
equal to a continuous character.
\end{itemize}
\end{corollary}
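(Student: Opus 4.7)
The plan is to deduce this corollary directly from Corollary \ref{3Non} together with Remark \ref{OntoIso} and Yeadon's classical theorem (which is quoted just after Proposition \ref{Yeadon}), since the work has already been done in the more general separating setting.

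For the direction $(ii)\Rightarrow(i)$, assume $\delta\in\Tdb$ and $\delta\phi=\psi$ locally almost everywhere for some continuous character $\psi\colon G\to\Tdb$. Then $M_\phi=\bar\delta\, M_\psi$ as densely defined operators on $L^p(VN(G))$. By Remark \ref{OntoIso}, $M_\psi\colon L^p(VN(G))\to L^p(VN(G))$ is an onto isometry, so $M_\phi$ is a bounded Fourier multiplier and an isometry as well, since multiplying by a unimodular scalar preserves this property.

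For the non-trivial direction $(i)\Rightarrow(ii)$, suppose $M_\phi\colon L^p(VN(G))\to L^p(VN(G))$ is an isometry, with $p\neq 2$. By Yeadon's theorem \cite{Yeadon} (see the discussion following Proposition \ref{Yeadon}), every isometry on a noncommutative $L^p$-space with $p\neq 2$ is separating. Hence $M_\phi$ is a bounded separating Fourier multiplier on $L^p(VN(G))$. Corollary \ref{3Non} then provides $c_0\in\Cdb$ and a continuous character $\psi\colon G\to\Tdb$ such that $\phi=c_0\psi$ locally almost everywhere; note $c_0\neq 0$ since $M_\phi$ is an isometry and hence non-zero. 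It remains to verify that $\vert c_0\vert=1$. But $M_\phi=c_0 M_\psi$, and by Remark \ref{OntoIso} (applied to $\psi$, which falls in case (ii) of Corollary \ref{3Non} with constant $1$) the operator $M_\psi\colon L^p(VN(G))\to L^p(VN(G))$ is an isometry. Therefore
\[
1=\norm{M_\phi\colon L^p(VN(G))\to L^p(VN(G))}=\vert c_0\vert\,\norm{M_\psi\colon L^p(VN(G))\to L^p(VN(G))}=\vert c_0\vert.
\]
Setting $\delta:=c_0^{-1}\in\Tdb$ yields $\delta\phi=\psi$ locally almost everywhere, which is (ii).

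There is no real obstacle here: the corollary is essentially a packaging of previously established facts, with the only subtle point being the appeal to Yeadon's theorem to convert the isometric hypothesis (for $p\neq 2$) into the separating hypothesis to which Corollary \ref{3Non} applies.
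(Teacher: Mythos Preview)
Your proof is correct and follows essentially the same route as the paper's own argument: both directions invoke the same ingredients (Remark \ref{OntoIso} / the proof of Theorem \ref{3Main} for $(ii)\Rightarrow(i)$, and Yeadon's theorem plus Corollary \ref{3Non} followed by the norm computation $\vert c_0\vert=1$ for $(i)\Rightarrow(ii)$).
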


\begin{proof}
It follows from the proof of the implication
``$(ii)\,\Rightarrow\,(i)$" in Theorem \ref{3Main}
that for any continuous character
$\psi\colon G\to \Tdb$, $M_\psi \colon L^p(VN(G))\to L^p(VN(G))$ is an isometry. Thus,  (ii) implies (i). Conversely,
assume (i).
Since $p\not=2$, any isometry on $L^p(VN(G))$ is separating, by 
\cite{Yeadon} (see also \cite{LZ}). Hence by Corollary
\ref{3Non}, there exist $c\in\Cdb$
and  a continuous character $\psi\colon G\to\Tdb$ such 
that $\phi=c\psi$ locally almost everywhere. Then $M_\phi=c
M_\psi$, hence $\norm{M_\phi}=\vert c\vert \norm{M_\psi}$,
hence $\vert c\vert =1$. This yields (ii), with $\delta=c^{-1}$.
\end{proof}

Note that Corollary \ref{3Iso} is not true in the case $p=2$. 
Indeed, let $\phi\in L^\infty(G)$. It
follows from the discussion following (\ref{2Plancherel})
that $M_\phi\colon L^2(VN(G))\to L^2(VN(G))$ is an isometry 
if and only if $\vert\phi\vert=1$ 
locally almost everywhere. Yet in general, plenty of 
these isometric Fourier multipliers are not separating.
See Section 4 for more on this.

\begin{remark}\label{3Counter}
Corollary \ref{3Non} may be wrong if 
one replaces ``locally almost everywhere" by
``almost everywhere" in (ii). 
Indeed as in \cite[Section 2.3]{F}, take $G = \Rdb \times \Rdb_{\text{disc}}$, 
where the second factor is 
equipped with the discrete topology. Consider 
$Y = \{0\} \times \Rdb_{\text{disc}}$ which is a closed subset of $G$, 
hence Borel, and set 
$\phi=\chi_Y$. For any compact set $K\subseteq G$, we have
$\mu(K\cap Y)=0$, by \cite[Proposition 2.22]{F}. 
Hence $\phi\vert_K=0$ almost everywhere. By Lemma
\ref{lem-locally-ae-compact}, this implies that
$\phi=0$ locally almost everywhere. Thus
$\phi$ satisfies the properties of 
Corollary \ref{3Non}, with $M_\phi=0$.

However by \cite[Proposition 2.22]{F} again,
$\{s \in G \,:\, \phi(s) \neq 0 \} = Y$ 
has infinite Haar measure, hence $\phi$ 
is not almost everywhere equal to $0$.
Consequently, $\phi$ cannot be almost everywhere equal
to a constant times a continuous character.
\end{remark}

\begin{remark}\label{3Discrete}
In the case when $G$ is discrete, continuity on $G$ is automatic 
and two locally almost everywhere equal functions are equal.
Therefore, in the statement of  Corollary
\ref{3Non}, we can replace part $(ii)$ by 
the following slightly simpler statement: 
there exist $c\in\Cdb$ and a character 
$\psi\colon G\to\Cdb$ such that $\phi=c\psi$.
\end{remark}

\begin{remark}
De Canni\`ere and Haagerup \cite{DCH} defined Fourier multipliers on $VN(G)$, 
including the case when $G$ is not unimodular. 
Let $\phi\in C_b(G)$ and assume that $\phi$ induces 
a Fourier multiplier $M_\phi:VN(G)\to VN(G)$ in the sense of 
\cite[Proposition 1.2]{DCH}. Assume that 
$M_\phi$ is separating. If $G$ is unimodular, 
then $M_\phi:L^2(VN(G))\to L^2(VN(G))$ is separating by \cite[Lemma 3.9]{LZ}. 
Hence, by Corollary \ref{3Non}, $\phi$ is a multiple of a character. 

However, in the general case of a non-unimodular locally compact group, 
the description of separating Fourier multipliers on $VN(G)$ is open.
\end{remark}

\begin{remark}
Let $\Gamma$ be a locally compact abelian group.
Let $G=\widehat{\Gamma}$ be its dual group and recall that 
$L^\infty(\Gamma)=VN(G)$. Let $1\leq p<\infty$.
For any $u\in \Gamma$, let $\tau_u\colon L^p(\Gamma)\to L^p(\Gamma)$ be the 
translation operator defined by $\tau_u(f)
=f(\cdotp - u)$, for all $f\in L^p(\Gamma)$. 
Note that if we regard $u\in \Gamma$ as a character $u\colon
G\to\Tdb$, then
the associated Fourier multiplier 
$M_u\colon L^p(\Gamma)\to L^p(\Gamma)$ coincides with $\tau_u$. 

Let $T\colon L^p(\Gamma)\to L^p(\Gamma)$ be a bounded operator.
Then $T$ commutes with translations, that is, 
$T\circ \tau_u=\tau_u\circ T$ for all $u\in\Gamma$, if and 
only if $T$ is a Fourier multiplier 
(see e.g. \cite[Chapter 4]{Larsen}). Hence 
Corollary  \ref{3Iso} implies the following:
\begin{itemize}
\item [(*)]
{\it If $p\not=2$,
an isometry $T\colon L^p(\Gamma)\to L^p(\Gamma)$ commutes with translations
if and only if there exists $c\in\Tdb$ and $u\in \Gamma$
such that $T=c\tau_u$.}
\end{itemize}
This statement is a classical
result due to Parrott \cite{Par} and Strichartz \cite{St}
and Corollary \ref{3Iso} should be regarded as a generalization of the latter.

We note that the two papers \cite{Par,St} show 
$(*)$ in the case
when $\Gamma$ is not necessarily abelian. If $\Gamma$ is non-abelian,
the statement $(*)$ is not related to 
Corollary \ref{3Iso}.
\end{remark}

\iffalse

We conclude this section with a simple observation.
\begin{corollary}
\label{cor-3Main-isometry}
Let $1 \leq p \not=2 < \infty$ and $
M_\phi:L^p(VN(G))\to L^p(VN(G))$ be an isometric Fourier multiplier.
Then there exists a constant $c\in \Tdb$ and a continuous character $\psi : G \to \Tdb$ such that $\phi$ coincides locally almost everywhere with $c \psi$.
\end{corollary}

\begin{proof}
According to \cite[Theorem 2]{Yeadon}, $M_\phi$ is separating.
Now combine Corollary \ref{3Non} with Lemma \ref{lem-surjective-value-1} below.
\end{proof}

Note that Corollary \ref{cor-3Main-isometry} is not true in the case $p=2$. Indeed, let $\phi\in L^\infty(G)$. Then $M_\phi: L^2(VN(G))\to L^2(VN(G))$ is an isometry if and only if $\vert\phi\vert=1$ locally almost everywhere. Yet in general, plenty of these isometric Fourier multipliers are not separating.

\fi

%%%%%%%%%%%%%Section 4%%%%%%%%%
\section{Completely positive and completely isometric Fourier multipliers}

In this section, we complement the characterizations of separating and isometric $L^p$-Fourier multipliers from Section \ref{Sec-Separating-Fourier-multipliers} with further information. Throughout this section, we assume that $G$ is a unimodular locally compact group. \\

Let $\M$ be a semifinite von Neumann algebra equipped with a normal semifinite faithful trace $\tau$. For any $n\geq 1$, we equip $M_n(\M)$ with ${\rm tr}_n\otimes \tau$, where  ${\rm tr}_n$ is the usual trace on $M_n$. For any $1\leq p\leq \infty$, the resulting noncommutative $L^p$-space $L^p(M_n(\M))$ can be naturally identified (at the algebraic level) with the space of all  $n\times n$ matrices $[x_{ij}]_{1\leq i,j\leq n}$ with entries $x_{ij}$ belonging to $L^p(\M)$.

Let $T\colon L^p(\M)\to L^p(\M)$ be a bounded
operator. For any $n\geq 1$, let $T_n\colon L^p(M_n(\M))\to 
L^p(M_n(\M))$ be defined by $T_n\bigl([x_{ij}]\bigr)=[T(x_{ij})]$, 
for all $[x_{ij}]_{1\leq i,j\leq n}$ in $L^p(M_n(\M))$. 
Following usual terminology, we say that $T$ is completely 
positive if $T_n$ is positive for all $n\geq 1$.
Likewise, we say that $T$ is a complete contraction if $\norm{T_n}\leq 1$ for all $n\geq 1$ and that $T$ is a complete isometry if $T_n$ is an isometry for all $n\geq 1$.

Let $\psi\colon G\to\Tdb$ be a continuous character. Then $\psi$ is positive definite hence by \cite[Proposition 4.2]{DCH}, the Fourier multiplier $M_\phi\colon VN(G)\to VN(G)$ is completely positive. The proof of the implication ``$(ii)\,\Rightarrow\,(i)$"  in Theorem \ref{3Main} actually shows that $M_\psi\colon L^p(VN(G)) \to  L^p(VN(G))$ is  a complete contraction for all $1\leq p<\infty$, and then that $M_\psi\colon  L^p(VN(G))\to  L^p(VN(G))$ is  a complete isometry for all $1\leq p<\infty$. As a consequence of Corollary \ref{3Non}, we therefore obtain the following complement to Remark \ref{OntoIso}.

\begin{corollary}\label{4CI}
Let $1\leq p<\infty$ and let $\phi\in L^\infty(G)\setminus\{0\}$. Assume that $M_\phi \colon L^p(VN(G))\to L^p(VN(G))$ is bounded and separating. Then $\norm{M_\phi}^{-1}M_\phi$ is a complete isometry.
\end{corollary}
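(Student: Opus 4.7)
The plan is to derive this corollary as a direct consequence of Corollary \ref{3Non} together with the complete isometry property of Fourier multipliers associated with continuous characters, which was already observed in the paragraph just before the statement.

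First, I would apply Corollary \ref{3Non} to the separating multiplier $M_\phi$: this yields a constant $c\in\Cdb$ and a continuous character $\psi\colon G\to\Tdb$ such that $\phi=c\psi$ locally almost everywhere. Since $\phi\neq 0$, we must have $c\neq 0$. By Remark \ref{Injective} applied to the symbols $\phi$ and $c\psi$ (which define the same multiplier at the level of $\lambda(C_c(G)\ast C_c(G))$), we then have the operator identity $M_\phi=c\,M_\psi$ on $L^p(VN(G))$.

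Next, I would invoke the paragraph preceding the corollary: since $\psi$ is a continuous character, $M_\psi\colon L^p(VN(G))\to L^p(VN(G))$ is a complete isometry for every $1\leq p<\infty$. In particular, $\norm{M_\psi}=1$, so $\norm{M_\phi}=\vert c\vert\norm{M_\psi}=\vert c\vert$. Consequently,
\[
\norm{M_\phi}^{-1}M_\phi=\frac{c}{\vert c\vert}\,M_\psi.
\]
Finally, I would note that multiplication by a unimodular scalar $\delta=c/\vert c\vert\in\Tdb$ preserves the complete isometry property: for every $n\geq 1$ and every $x\in L^p(M_n(VN(G)))$, one has $\bignorm{(\delta M_\psi)_n(x)}_p=\vert\delta\vert\bignorm{(M_\psi)_n(x)}_p=\norm{x}_p$. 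Hence $\norm{M_\phi}^{-1}M_\phi$ is a complete isometry, as claimed.

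There is essentially no obstacle here: the heavy lifting has been done in Corollary \ref{3Non} and in the discussion of complete isometry for character multipliers just above the statement. The only point to be careful about is to justify the identity $M_\phi=cM_\psi$ from the locally almost everywhere equality $\phi=c\psi$, which is handled via Remark \ref{Injective}.
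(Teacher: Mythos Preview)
Your proposal is correct and follows essentially the same approach as the paper: apply Corollary~\ref{3Non} to write $\phi=c\psi$ locally almost everywhere for a continuous character $\psi$, use the paragraph preceding the corollary to conclude that $M_\psi$ is a complete isometry, and deduce that $\norm{M_\phi}^{-1}M_\phi=(c/\vert c\vert)M_\psi$ is a complete isometry. Your explicit justification of the operator identity $M_\phi=cM_\psi$ via Remark~\ref{Injective} makes a step explicit that the paper leaves implicit.
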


\begin{remark}
Let $1\leq p<\infty$ and let $\phi\in L^\infty(G)\setminus\{0\}$ 
such that $M_\phi \colon L^p(VN(G))\to L^p(VN(G))$ 
is bounded and separating. Let $(w,B,J)$ be the Yeadon triple 
of the latter operator. According to Corollary \ref{4CI} 
and \cite[Theorem 3.2]{JRS}, $J$ is a $*$-homomorphism. 

We can make this statement more precise, as follows.
Applying  Corollary \ref{3Non}, let $c\in\Cdb$ let $\psi\colon G\to\Tdb$ 
be the continuous character such that $\phi=c\psi$ locally almost everywhere. 
Then $J\colon VN(G)\to VN(G)$ is the $L^\infty$-Fourier 
multiplier associated with $\psi$, $c=\norm{M_\phi}$,
$B=\vert c\vert\cdotp 1$ and $w=c\vert c\vert^{-1}\cdotp 1$. 
The easy verification is left to the reader.
\end{remark}

\begin{lemma}
\label{lem-surjective-value-1}
Let $1\leq p<\infty$ and let  $\phi\in L^\infty(G)$. If $M_\phi$ is a bounded Fourier multiplier on $L^p(VN(G))$ and
$M_\phi\colon L^p(VN(G))\to L^p(VN(G))$ is an isometry, 
then $|\phi| = 1$ locally almost everywhere.
\end{lemma}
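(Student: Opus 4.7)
The plan is to split the argument according to whether $p=2$ or $p\neq 2$.

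When $p=2$, I would appeal to the Plancherel identification (\ref{2Plancherel}) together with the discussion following (\ref{Equal}), which show that $M_\phi\colon L^2(VN(G))\to L^2(VN(G))$ is conjugate, via the unitary $U_\lambda$, to the pointwise multiplication operator $\pi(\phi)\colon L^2(G)\to L^2(G)$. If $M_\phi$ is an isometry, then so is $\pi(\phi)$, which yields $\int_G(|\phi|^2-1)|f|^2\,d\mu=0$ for every $f\in L^2(G)$. Testing against $f=\chi_B$ for arbitrary Borel sets $B$ of finite measure forces $|\phi|^2=1$ almost everywhere on each such $B$, which, by the definition of locally null sets recalled before Definition \ref{def-Fourier-multiplier}, is exactly the assertion that $|\phi|=1$ locally almost everywhere.

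When $p\neq 2$, the statement follows immediately from Corollary \ref{3Iso}: there exist $\delta\in\Tdb$ and a continuous character $\psi\colon G\to\Tdb$ such that $\delta\phi=\psi$ locally almost everywhere. Since $|\delta|=1$ and $|\psi(s)|=1$ for every $s\in G$, we conclude $|\phi|=1$ locally almost everywhere.

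No serious obstacle is anticipated; the only point requiring a moment of care is the translation in the $p=2$ case between ``$|\phi|^2=1$ almost everywhere on every Borel set of finite measure'' and ``$|\phi|=1$ locally almost everywhere'', which is purely a matter of unwinding the definition of locally null sets given in Section~3.
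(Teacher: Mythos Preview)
Your proposal is correct and follows essentially the same route as the paper: the $p=2$ case is handled via the Plancherel identification (the paper simply refers to the paragraph preceding Remark~\ref{3Counter}, which is exactly what you unpack), and for $p\neq 2$ you invoke Corollary~\ref{3Iso} whereas the paper invokes Corollary~\ref{3Non} and then argues separately that $|c|=1$ using that $M_\psi$ is an isometry---but since Corollary~\ref{3Iso} is itself derived from Corollary~\ref{3Non} by that very argument, the two presentations are equivalent.
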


\begin{proof}
The case $p = 2$ follows from the paragraph preceding Remark 
\ref{3Counter}. Assume that $1 \leq p \neq 2 < \infty$.
By Corollary \ref{3Non}, there exist a constant $c\in\Cdb$ and 
a continuous character $\psi\colon G\to\Tdb$ such that $\phi=c\psi$ 
locally almost everywhere. We noticed before Corollary \ref{4CI} that 
$M_\psi$ is a complete isometry.
Since $c M_\psi = M_\phi$ is also an isometry, we must have that $|c| = 1$. Hence, $|\phi|=|c\psi|=|\psi|=1$ locally almost everywhere.
\end{proof}

We have the following partial converse of Lemma \ref{lem-surjective-value-1}.

\begin{proposition}
\label{prop-value-1-character}
Let $\phi\in L^\infty(G)$ such that $|\phi| = 1$ locally almost everywhere, let $1\leq p<\infty$ and assume that $M_\phi\colon L^p(VN(G))\to L^p(VN(G))$ is a bounded Fourier multiplier. If $M_\phi$ is completely positive, then $\phi$ coincides locally almost everywhere with a continuous character $\psi\colon G \to \Tdb$.
\end{proposition}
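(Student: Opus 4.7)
The plan is to show that $\phi$ coincides locally almost everywhere with a continuous positive definite function $\psi\colon G\to\Tdb$, and then to use $|\psi|=1$ to force $\psi$ to be multiplicative, hence a continuous character.

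For the first step, for any $f_1,\ldots,f_n\in C_c(G)$ the matrix $[\lambda(f_i\ast f_j^*)]_{i,j}=[\lambda(f_i)\lambda(f_j)^*]_{i,j}$ is positive in $M_n(VN(G))$ and lies in $L^p(M_n(VN(G)))$. Complete positivity of $M_\phi$ therefore yields
\[
\bigl[\lambda(\phi\cdot(f_i\ast f_j^*))\bigr]_{i,j}=(M_\phi\otimes I_n)\bigl([\lambda(f_i\ast f_j^*)]_{i,j}\bigr)\geq 0
\]
in $L^p(M_n(VN(G)))$, hence in $M_n(VN(G))$ (for bounded elements, positivity in $L^p$ and in the von Neumann algebra coincide). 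In particular each $\phi\cdot(f\ast f^*)$ is a continuous positive definite function on $G$ lying in $A(G)$. Using the approximate identity $(f_i)$ fixed before Lemma~\ref{ei} and passing to appropriate weak-$*$ limits, I would conclude that $\phi$ coincides locally almost everywhere with a continuous positive definite function $\psi\in C_b(G)$. A cleaner alternative route is to show that complete positivity on $L^p$ forces $M_\phi$ to extend to a weak-$*$ continuous completely positive Fourier multiplier on $VN(G)$, and then to invoke the classical correspondence between such multipliers and continuous positive definite functions (see \cite[Proposition 4.2]{DCH}).

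With $\psi\in C_b(G)$ positive definite and coinciding with $\phi$ locally almost everywhere, continuity of $|\psi|$, the hypothesis $|\phi|=1$ locally almost everywhere, and the fact that a locally null set has empty interior (otherwise it would contain a compact subset of positive Haar measure) together give $|\psi(s)|=1$ for every $s\in G$. Positive definiteness then forces $\psi(e)\in[0,\infty)$, hence $\psi(e)=1$. For arbitrary $s,t\in G$, the $3\times 3$ positive semidefinite matrix $[\psi(s_i^{-1}s_j)]_{i,j}$ with $(s_1,s_2,s_3)=(e,s,st)$ has diagonal $(1,1,1)$, and a direct computation using $|\psi|=1$ gives its determinant
\[
\det=-2+2\operatorname{Re}\bigl(\psi(s)\psi(t)\overline{\psi(st)}\bigr).
\]
Non-negativity of this determinant combined with $|\psi(s)\psi(t)\overline{\psi(st)}|=1$ forces $\psi(s)\psi(t)\overline{\psi(st)}=1$, i.e., $\psi(st)=\psi(s)\psi(t)$. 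Hence $\psi$ is a continuous character $G\to\Tdb$ and $\phi=\psi$ locally almost everywhere.

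The main obstacle is the first step. Complete positivity directly yields only that each product $\phi\cdot(f\ast f^*)$ is positive definite; upgrading this to positive definiteness of a representative of $\phi$ in $L^\infty(G)$ requires a careful limit argument and some attention to locally null sets in the non-$\sigma$-compact case, where $L^\infty(G)$ is defined via the modified notion recalled before Definition~\ref{def-Fourier-multiplier}. The alternative approach through extending $M_\phi$ to $VN(G)$ has its own subtlety, since bounded $L^p$-Fourier multipliers do not in general extend to $VN(G)$, so the complete positivity hypothesis must genuinely be used to produce such an extension.
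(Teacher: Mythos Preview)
Your second step---the $3\times 3$ determinant argument showing that a continuous positive definite function $\psi$ with $|\psi|\equiv 1$ is multiplicative---is correct and is a pleasant alternative to the paper's argument. The paper instead writes $\psi(s)=\langle\pi(s)\xi,\xi\rangle$ via the GNS construction \cite[Proposition VII.3.21]{Tak2}, observes $\norm{\xi}=1$, and uses the equality case of Cauchy--Schwarz in $|\langle\pi(s)\xi,\xi\rangle|=1$ to obtain $\pi(s)\xi=\psi(s)\xi$; multiplicativity then drops out of $\pi(st)=\pi(s)\pi(t)$. Your determinant approach avoids the representation machinery entirely and is more elementary; the paper's approach is perhaps more conceptual.

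The first step, however, has a real gap beyond the one you flag. From the positivity of $[\lambda(\phi\cdot(f_i\ast f_j^*))]_{i,j}$ in $M_n(VN(G))$ you conclude that ``each $\phi\cdot(f\ast f^*)$ is a continuous positive definite function''. This does not follow: positivity of $\lambda(g)$ in $VN(G)$ only says that $\pi(g)\geq 0$ for the \emph{single} representation $\pi=\lambda$, whereas positive definiteness of $g$ requires $\pi(g)\geq 0$ for \emph{every} unitary representation $\pi$. The two notions differ as soon as $G$ is non-amenable. Moreover, the deduction you actually make uses only the diagonal entries, i.e.\ only the case $n=1$, so complete positivity has not yet entered. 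The paper bypasses this by citing \cite[Proposition~6.11]{AK}, which proves directly that complete positivity of $M_\phi$ on $L^p(VN(G))$ forces $\phi$ to agree locally almost everywhere with a continuous positive definite function; the argument there is close in spirit to the computation in Lemma~\ref{3L1}(3), showing that $s\mapsto\phi(s)(\lambda(s)\zeta\,|\,\zeta)$ is positive definite for every $\zeta\in L^2(G)$ and then removing the $A(G)$-factor via Lemma~\ref{3L01}. Your suggested ``alternative route'' of extending $M_\phi$ to a completely positive Fourier multiplier on $VN(G)$ and invoking \cite[Proposition~4.2]{DCH} is essentially what that reference does.
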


\begin{proof}
Since $M_\phi$ is completely positive, it follows from \cite[Proposition 6.11]{AK} that $\phi$ is locally almost everywhere equal to a continuous positive definite function. 
Hence, we may assume that $\phi$ is continuous (and positive definite). By \cite[Proposition VII.3.21]{Tak2}, there exist a unitary representation $\pi\colon G \to B(H)$ on a Hilbert space $H$ and a vector $\xi \in H$ such that
\begin{equation}\label{mod}
\phi(s) 
= \langle \pi(s)\xi,\xi \rangle, \qquad s \in G. 
\end{equation}
Since $\vert\phi\vert=1$, we have $\phi(e)=1$. Hence it 
follows from \eqref{mod} that $1 = \phi(e) = \langle \pi(e) \xi,\xi \rangle = \norm{\xi}_H^2$. Given $s \in G$, applying the Cauchy-Schwarz inequality we obtain
\[ 
1 = |\phi(s)| 
= |\langle \pi(s)\xi,\xi \rangle| 
\leq \norm{\pi(s)\xi}_H \norm{\xi}_H 
= \norm{\xi}_H^2  = 1. 
\]
It follows from the equality condition 
in the  Cauchy-Schwarz inequality that there is $\psi(s) \in \Cdb$ 
such that $\pi(s)\xi = \psi(s)\xi$.

Now, for any $s,t \in G$, on the one hand 
\[ 
\pi(st)\xi = \psi(st)\xi,
\]
and on the other hand,
\[
\pi(s)\pi(t)\xi= \pi(s)\psi(t)\xi=\psi(s) \psi(t)\xi.
\]
Hence, $\psi(st) = \psi(s)\psi(t)$. Finally, 
$\phi(s) = \langle \pi(s)\xi,\xi\rangle= \psi(s) \norm{\xi}_H^2 = \psi(s)$ 
for all $s\in G$. Therefore, $\phi = \psi$  is a 
character.
\end{proof}

When $p = 1$ and $G$ is assumed to be amenable we can
change the assumption of complete positivity 
in Proposition \ref{prop-value-1-character} 
into mere contractivity.

\begin{proposition}
\label{prop-amenable-characterization-L1-isometries}
Let $G$ be an amenable unimodular locally compact group. 
Let $\phi\in L^\infty(G)$ and assume that
$M_\phi\colon L^1(VN(G))\to L^1(VN(G))$ is a contractive Fourier multiplier. 
The following are equivalent.
\begin{itemize}
\item [(i)] $M_\phi$ is an isometry.
\item [(ii)] $|\phi| = 1$ locally almost everywhere.
\item [(iii)] There exist $c\in\Tdb$ and a continuous character $\psi \colon G \to \Tdb$ 
such that $\phi=c\psi$ locally almost everywhere.
\end{itemize}
\end{proposition}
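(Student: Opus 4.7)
The plan is to establish the cycle (iii)$\Rightarrow$(i)$\Rightarrow$(ii)$\Rightarrow$(iii). The first implication is immediate from the discussion preceding Corollary \ref{4CI}: for any continuous character $\psi\colon G\to\Tdb$, the multiplier $M_\psi$ is a complete isometry on every $L^p(VN(G))$, in particular on $L^1$, and multiplying by $c\in\Tdb$ preserves this property. The implication (i)$\Rightarrow$(ii) is exactly Lemma \ref{lem-surjective-value-1}, so no further argument is needed there.

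The heart of the proof is (ii)$\Rightarrow$(iii), and this is where amenability is decisive. Since the adjoint of $M_\phi\colon L^1(VN(G))\to L^1(VN(G))$ is the normal Fourier multiplier $M_{\check\phi}\colon VN(G)\to VN(G)$, the contractivity of $M_\phi$ corresponds, via the isometric identification $L^1(VN(G))\simeq A(G)$, to contractivity of a pointwise multiplier of the Fourier algebra $A(G)$. I would then invoke Losert's theorem that for amenable $G$ the multiplier algebra $MA(G)$ coincides isometrically with the Fourier--Stieltjes algebra $B(G)$. This produces a strongly continuous unitary representation $\pi\colon G\to B(H)$ and vectors $\xi,\eta\in H$ with $\norm{\xi}\norm{\eta}\leq 1$ such that
\[
\phi(s)=\langle\pi(s)\xi,\eta\rangle,\qquad s\in G,
\]
and in particular $\phi$ is continuous.

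Next, I would upgrade the assumption $\vert\phi\vert=1$ locally almost everywhere to $\vert\phi\vert=1$ everywhere: on any compact neighborhood $K$ of a given point, the exceptional set has measure zero in $K$, and since Haar measure has full support on open sets, its complement is dense in $K$; continuity of $\phi$ then forces $\vert\phi\vert=1$ pointwise on $K$. Evaluating at $s=e$ gives $1=\vert\phi(e)\vert=\vert\langle\xi,\eta\rangle\vert\leq\norm{\xi}\norm{\eta}\leq 1$, so equality holds in Cauchy--Schwarz. Thus $\norm{\xi}\norm{\eta}=1$ and $\xi$ is proportional to $\eta$; after rescaling, $\norm{\eta}=1$ and $\xi=\alpha\eta$ with $\alpha\in\Tdb$, whence $\phi=\alpha\Psi$ where $\Psi(s):=\langle\pi(s)\eta,\eta\rangle$ satisfies $\vert\Psi\vert=1$ everywhere. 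The argument inside the proof of Proposition \ref{prop-value-1-character} then applies verbatim to $\Psi$: the Cauchy--Schwarz equality $\vert\Psi(s)\vert=1=\norm{\pi(s)\eta}\norm{\eta}$ yields a scalar $\mu(s)\in\Tdb$ with $\pi(s)\eta=\mu(s)\eta$, from which the multiplicativity of $\pi$ forces $\mu$ to be a continuous character on $G$ and $\Psi=\mu$. Setting $c=\alpha\in\Tdb$ and $\psi=\mu$ gives (iii).

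The main obstacle is the bookkeeping around the identification $MA(G)=B(G)$ for amenable $G$: one must verify that the $MA(G)$-norm really matches the $L^1(VN(G))$ operator norm of $M_\phi$ under the duality $L^1(VN(G))\simeq A(G)$ spelled out after \eqref{3Polar}, and keep track of the check operation $\phi\mapsto\check\phi$, which swaps sides in the duality but preserves membership in $B(G)$ along with the $B(G)$-norm.
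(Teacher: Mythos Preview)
Your proof is correct and follows essentially the same path as the paper: the cycle (iii)$\Rightarrow$(i)$\Rightarrow$(ii) via the earlier results, and (ii)$\Rightarrow$(iii) by representing $\phi$ as a coefficient function in $B(G)$ (using amenability to identify $MA(G)$ with $B(G)$), then applying the Cauchy--Schwarz equality argument from Proposition~\ref{prop-value-1-character}. The paper cites \cite{DCH} and \cite{Herz} rather than Losert for $MA(G)=B(G)$, normalizes directly to $\norm{\xi}=\norm{\eta}=1$ where you write $\norm{\xi}\,\norm{\eta}\le 1$, and multiplies by $\overline{\phi(e)}$ instead of isolating a scalar $\alpha$, but these are cosmetic differences.
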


\begin{proof}
The implication ``$(i)\,\Rightarrow\,(ii)$"  
is established in Lemma \ref{lem-surjective-value-1}. 
The implication ``$(iii)\,\Rightarrow\,(i)$"  was already discussed
several times (see, for example, Remark \ref{OntoIso}). 
We now show that ``$(ii)\,\Rightarrow\,(iii)$".  
Since $M_\phi$ is a bounded Fourier multiplier on $L^1(VN(G))$, 
we may assume that $\phi$ is continuous, by Lemma \ref{Continuity}. 
Further since $G$ is amenable, symbols of Fourier multipliers on $VN(G)$ 
coincide with the Fourier-Stieltjes algebra of $G$. This classical result is
mentioned in \cite[p. 456]{DCH}, see also \cite[Theorem 1]{Herz}. 
Hence by \cite[Lemma 2.14]{Eym}, there exist a 
unitary representation $\pi\colon G \to B(H)$ on a Hilbert 
space $H$ and vectors $\xi,\eta$ in $H$ such that
\[ 
\phi(s) = \langle \pi(s) \xi,\eta \rangle,\quad s\in G,
\qquad \text{and}\qquad \|\xi\|_H = \|\eta\|_H=1.
\]
Assume $(ii)$. Multiplying $\phi$ by $\overline{\phi(e)}$, 
we may assume that $\phi(e) = 1$. This implies 
that $1=\langle \pi(e) \xi,\eta \rangle =\langle \xi,\eta \rangle$. 
Since $\|\xi\|_H = \|\eta\|_H=1$, we deduce that $\eta=\xi$. 
Thus, $\phi$ satisfies (\ref{mod}). 
The proof of Proposition \ref{prop-value-1-character} 
therefore shows  that $\phi$ is a character.
\end{proof}

%%%%%%section 5%%%%%%%%%%%%%%

\section{A characterization of separating Schur multipliers}
Let $(\Omega,\Sigma,\mu)$ be a $\sigma$-finite measure space.
For any $f\in L^2(\Omega^2)$, 
let $S_f\colon L^2(\Omega)\to L^2(\Omega)$
be the bounded operator defined by 
$$
[S_f(h)](s) = \int_\Omega f(s,t)h(t)\, dt,
\qquad h\in L^2(\Omega).
$$
We recall that $S_f\in S^2(L^2(\Omega))$ and that the mapping 
$f\mapsto S_f$ is a unitary operator from $L^2(\Omega^2)$ onto $S^2(L^2(\Omega))$,
see e.g. \cite[Theorem VI. 23]{RS}.

Let $\phi\in L^\infty(\Omega^2)$. According to the above 
identification $L^2(\Omega^2)\simeq S^2(L^2(\Omega))$, one may define 
a bounded operator $T_\phi\colon
S^2(L^2(\Omega))\to S^2(L^2(\Omega))$ by 
\begin{equation}\label{4DefSchur}
T_\phi(S_f) = S_{\phi f},\qquad f\in L^2(\Omega^2).
\end{equation}
Moreover the norm of this operator is equal to $\norm{\phi}_\infty$.
The operator $T_\phi$ is called a Schur multiplier.

Let $1\leq p<\infty$. We say that $T_\phi$ is 
a bounded Schur multiplier on the Schatten space 
$S^p(L^2(\Omega))$ if the restriction
of $T_\phi$ to $S^p(L^2(\Omega))\cap S^2(L^2(\Omega))$
extends to a bounded operator from $S^p(L^2(\Omega))$ into itself.
Schur multipliers as defined in this section
go back at least 
to Haagerup \cite{Haa} and Spronk \cite{Spronk}.

For any $\alpha\in L^\infty(\Omega)$, we let
${\rm Mult}_\alpha\in B(L^2(\Omega))$ be
the multiplication operator taking $h$ to $\alpha h$
for all $h\in L^2(\Omega)$. Then we let 
$$
\D(\Omega)=\bigl\{{\rm Mult}_\alpha\, :\, 
\alpha\in L^\infty(\Omega)\bigr\}.
$$
This is von Neumann sub-algebra of $B(L^2(\Omega))$, which is
isomorphic (as a von Neumann algebra) to $L^\infty(\Omega)$. We will 
use the classical fact that
\begin{equation}\label{4Commute}
\D(\Omega)'=\D(\Omega),
\end{equation}
where $\D(\Omega)'$ stands for the commutant of $\D(\Omega)$. In other words,
a bounded operator $V\colon L^2(\Omega)\to L^2(\Omega)$ belongs
to $\D(\Omega)$ if and only if $V\circ {\rm Mult}_\alpha
={\rm Mult}_\alpha\circ V$ for all $\alpha\in L^\infty(\Omega)$.

We note that for any $\alpha\in L^\infty(\Omega)$, 
the mapping $x\mapsto {\rm Mult}_\alpha\circ x$
is a Schur multiplier. Indeed it coincides with
$T_\phi$, where the symbol $\phi\in L^\infty(\Omega^2)$
is given by $\phi(s,t)=\alpha (s)$.
Likewise, for any $\beta\in L^\infty(\Omega)$,
the mapping $x\mapsto x\circ {\rm Mult}_\beta$
is a Schur multiplier, with symbol 
$\phi$ given by $\phi(s,t)=\beta(t)$.

\begin{theorem}\label{4Main}
Let $\phi\in L^\infty(\Omega^2)$ and let $1\leq p<\infty$.
The following are equivalent.
\begin{itemize}
\item [(i)] The mapping $T_\phi$ 
is a bounded Schur multiplier on $S^p(L^2(\Omega))$, and the resulting operator 
$T_\phi \colon S^p(L^2(\Omega))\to S^p(L^2(\Omega))$ is separating.
\item[(ii)] There exist a constant $c\in\Cdb$ and two unitaries 
$\alpha,\beta\in L^\infty(\Omega)$ such that 
$$
\phi(s,t) =c\, \alpha(s)\beta(t)\qquad \hbox{for almost every}\ (s,t)\in\Omega^2.
$$
\item[(iii)] There exist a constant $c\in\Cdb$ and two unitaries 
$\alpha,\beta\in L^\infty(\Omega)$ such that 
\begin{equation}\label{4Factor0}
T_\phi(x)= c\,{\rm Mult}_\alpha\circ x\circ {\rm Mult}_\beta,
\qquad x\in S^2(L^2(\Omega)).
\end{equation}
\end{itemize}
\end{theorem}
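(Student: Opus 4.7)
The equivalence $(ii) \Leftrightarrow (iii)$ is a direct kernel computation: for $f \in L^2(\Omega^2)$ and $\phi(s,t) = c\alpha(s)\beta(t)$, one has $[T_\phi(S_f)h](s) = \int c\alpha(s)\beta(t)f(s,t)h(t)\,dt = c\alpha(s)[S_f(\beta h)](s)$, which matches $[c\,{\rm Mult}_\alpha \circ S_f \circ {\rm Mult}_\beta](h)(s)$; density extends the identity to $S^p$. The implication $(iii) \Rightarrow (i)$ is immediate: the operator $x \mapsto c\,{\rm Mult}_\alpha\,x\,{\rm Mult}_\beta$ is bounded on every $S^p$ (with norm $\leq |c|$, since ${\rm Mult}_\alpha, {\rm Mult}_\beta$ are unitary on $L^2(\Omega)$), and it is separating because $({\rm Mult}_\alpha\, a\, {\rm Mult}_\beta)^*({\rm Mult}_\alpha\, b\, {\rm Mult}_\beta) = {\rm Mult}_\beta^*(a^*b){\rm Mult}_\beta$, together with the symmetric identity, vanishes whenever $a$ and $b$ are disjoint.

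The heart of the proof is $(i) \Rightarrow (ii)$. By Lemma \ref{2Comp} we may assume $p = 2$, so that $T_\phi$ is separating on $S^2(L^2(\Omega))$. The key test is on rank-one operators: for $\xi, \eta, \zeta, \omega \in L^2(\Omega)$ with $\langle \eta, \xi\rangle = \langle \omega, \zeta\rangle = 0$, the operators $a = \xi \otimes \bar\zeta$ and $b = \eta \otimes \bar\omega$ are disjoint, and a direct calculation yields that $T_\phi(a)^*T_\phi(b)$ is the Hilbert--Schmidt operator with kernel
\[K(u,v) = \zeta(u)\bar\omega(v) \int_\Omega \bar\phi(s,u)\phi(s,v)\bar\xi(s)\eta(s)\,ds.\]
By varying $\zeta, \omega$ over orthogonal pairs supported near arbitrary prescribed points of $\Omega^2$, the vanishing of $K$ forces, for a.e.\ $(u,v)\in\Omega^2$ and every $\xi, \eta \in L^2(\Omega)$ with $\xi \perp \eta$,
\[\int_\Omega \bar\phi(s,u)\phi(s,v)\bar\xi(s)\eta(s)\,ds = 0.\]
Since any $h \in L^1(\Omega)$ with $\int h = 0$ can be written as $\bar\xi\eta$ with $\xi \perp \eta$ in $L^2$ (take $\xi = |h|^{1/2}$, $\eta = |h|^{1/2}e^{i\arg h}$), the bounded function $s \mapsto \bar\phi(s,u)\phi(s,v)$ is orthogonal in $L^\infty$--$L^1$ duality to every mean-zero function, hence essentially constant. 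Denote this constant by $c(u,v)$; the symmetric computation with $T_\phi(a)T_\phi(b)^* = 0$ yields that $t \mapsto \phi(p,t)\bar\phi(q,t)$ is likewise essentially constant in $t$ for a.e.\ $(p,q)$.

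Assume now $T_\phi \neq 0$. A Fubini argument then produces $u_0$ such that $\bar\phi(s,u_0)\phi(s,v) = c(u_0,v)$ holds for a.e.\ $(s,v)$ and such that $c(u_0, v_0) \neq 0$ for some $v_0$; this forces $\phi(\cdot, u_0)$ to be a.e.\ nonzero, and one extracts a pointwise factorization $\phi(s,v) = \alpha(s)\tilde\beta(v)$ with $\alpha(s) := 1/\bar\phi(s,u_0)$ and $\tilde\beta(v) := c(u_0,v)$. Plugging this back into ``$\bar\phi(s,u)\phi(s,v)$ is constant in $s$'' gives $|\alpha(s)|^2\bar{\tilde\beta}(u)\tilde\beta(v)$ constant in $s$ for a.e.\ $(u,v)$; since $\tilde\beta$ is not a.e.\ zero, this forces $|\alpha|$ to be essentially constant, and symmetrically $|\tilde\beta|$ is essentially constant using the second constraint. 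Writing $\alpha = a\alpha_1$ and $\tilde\beta = b\beta_1$ with $a,b > 0$ and $|\alpha_1| = |\beta_1| = 1$, and setting $c := ab$, one obtains $\phi(s,t) = c\,\alpha_1(s)\beta_1(t)$ a.e., yielding $(ii)$. The main subtlety requiring care is verifying that $T_\phi$ separating and nonzero prevents $\phi$ from vanishing on a set of positive measure---without this one cannot cleanly produce $u_0$ with $\phi(\cdot, u_0)$ a.e.\ nonzero---and this is extracted from the relation $\bar\phi(s,u)\phi(s,v) = c(u,v)$ combined with the existence of some nonzero $c(u,v)$, which in turn follows from $T_\phi \neq 0$.
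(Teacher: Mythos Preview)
Your argument is correct and takes a genuinely different route from the paper's. The paper proves $(i)\Rightarrow(ii)$ via the abstract Yeadon machinery: it invokes the Yeadon triple $(w,B,J)$ of $T_\phi$, uses that $B(L^2(\Omega))$ is a factor to get injectivity and dense range, applies St{\o}rmer's decomposition to see that $J$ is either a $*$-homomorphism or an anti-$*$-homomorphism, then brings in the structure of normal $*$-representations of $B(H)$ (ampliation by a Hilbert space $E$) to obtain $T_\phi(x)=c\,vxu$ for unitaries $u,v$, and finally shows $u,v\in\D(\Omega)$ because Schur multipliers commute with diagonal multiplications; the anti-homomorphism case is ruled out by checking that transposition is not a Schur multiplier. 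Your approach bypasses all of this structure theory: you test the separating condition directly on rank-one operators, read off the pointwise constraints $\overline{\phi(s,u)}\phi(s,v)=c(u,v)$ and $\phi(p,t)\overline{\phi(q,t)}=d(p,q)$ a.e., and solve these by elementary measure-theoretic manipulations. Your route is more elementary and self-contained (no Jordan decomposition, no representation theory of $B(H)$), while the paper's argument is more conceptual and situates the result within the general theory of separating maps. Two places in your sketch deserve a sentence of extra care when written out: the passage from ``for every orthogonal pair $(\zeta,\omega)$, $K=0$ a.e.'' to ``$F_{\xi,\eta}=0$ a.e.'' (handled by noting that the span of $\zeta\otimes\bar\omega$ with $\zeta\perp\omega$ contains enough functions to separate points of $\Omega^2$, which needs $\dim L^2(\Omega)\geq 2$), and the quantifier swap from ``for each mean-zero $h$, $\int\Psi(s,u,v)h(s)\,ds=0$ a.e.\ $(u,v)$'' to ``$\Psi(s,u,v)=c(u,v)$ a.e.\ $(s,u,v)$'' (handled by integrating against $L^1(\Omega^2)$ in $(u,v)$ and using density of tensors in $L^1(\Omega^3)$).
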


\begin{proof}
\

\noindent
\underline{$(ii)\,\Rightarrow\,(iii)$:} Let $c,\alpha,\beta$
as in (ii) and let $x\in S^2(L^2(\Omega))$. Let $f\in L^2(\Omega^2)$ such that
$x=S_f$. Then for all $h\in L^2(\Omega)$, we have
$$
[x\circ {\rm Mult}_\beta (h)](s)
=\int_\Omega f(s,t)\beta(t)h(t)\,d\mu(t),
$$
hence
\begin{align*}
[c\, {\rm Mult}_\alpha\circ  x\circ {\rm Mult}_\beta (h)](s)
& =c\alpha(s) \int_\Omega f(s,t)\beta(t)h(t)\,d\mu(t)\\
&= \int_\Omega \phi(s,t) f(s,t)h(t)\, d\mu(t),
\end{align*}
for a.e. $s\in\Omega$. This shows (\ref{4Factor0}).

\noindent
\underline{$(iii)\,\Rightarrow\,(i)$:} Assume (\ref{4Factor0}) for some unitaries
$\alpha,\beta\in L^\infty(\Omega)$. It is plain that $T_\phi$ extends to 
a bounded operator on $S^p(L^2(\Omega))$ and that the  identity
(\ref{4Factor0}) holds true on $S^p(L^2(\Omega))$.

Let $x,y\in S^p(L^2(\Omega))$ such that 
$x^*y=xy^*=0$. Then 
$$
\bigl({\rm Mult}_\alpha\circ x\circ {\rm Mult}_\beta\bigr)^*
\bigl({\rm Mult}_\alpha\circ y\circ {\rm Mult}_\beta\bigr)
= {\rm Mult}_{\beta}^*\circ  x^* \circ {\rm Mult}_{\alpha}^*{\rm Mult}_{\alpha} 
\circ y\circ  {\rm Mult}_\beta.
$$
Since $\alpha$ is a unitary of $L^\infty(\Omega)$, the operator 
${\rm Mult}_{\alpha}$ is a unitary of $B(L^2(\Omega))$, hence
the right hand-side of the above equality is equal to 
${\rm Mult}_{\beta}^*\circ  x^* 
y\circ  {\rm Mult}_\beta$, hence to $0$. Thus $T_\phi(x)^*T_\phi(y)=0$. Likewise, 
$T_\phi(x) T_\phi(y)^*=0$. This shows that 
$T_\phi \colon S^p(L^2(\Omega))\to S^p(L^2(\Omega))$ is separating.

\smallskip
\noindent
\underline{$(i)\,\Rightarrow\,(ii)$:} For convenience
we let $H=L^2(\Omega)$ throughout this proof.
Owing to Lemma \ref{2Comp}, we may suppose that $p=2$. 
We let $(w,B,J)$ denote the Yeadon triple of the separating map
$T_\phi\colon S^2(H)\to S^2(H)$. 

We may assume that
$T_\phi$ is non-zero. Since $B(H)$ is a factor,
it follows from 
\cite[Lemma 4.3]{LZ3} that
$T_\phi$ is 1-1. Applying the definition 
of $T_\phi$, see (\ref{4DefSchur}), this implies
that $\phi\not=0$ almost everywhere. 
Applying this definition again, we obtain  that 
$T_\phi$ has dense range.
By Lemma \ref{2DenseRange}, we deduce that $w$ is a unitary
and that $J(1)=1$.

Let $w^*T_\phi$ denote the operator on $S^2(H)$
taking any $x\in S^2(H)$ to $w^*T_\phi(x)$.
According to \cite[Theorem 3.3]{S} (see also 
\cite[Corollary 7.4.9.]{HOS}), there exists a projection 
$q\in B(H)$ such that 
$x\mapsto qJ(x)$ is a $*$-homomorphism and
$x\mapsto (1-q)J(x)$ is an anti-$*$-homomorphism.
As explained in \cite[Remark 4.3]{LZ2}, this implies that
$w^*T_\phi$ is valued in 
$$
L^2\bigl(qB(H)q\bigr)\stackrel{2}{\oplus}
L^2\bigl((1-q)B(H)(1-q)\bigr)\,\subset S^2(H).
$$
Since $T_\phi$ has dense range and
$w$ is a unitary, $w^*T_\phi$ has dense range as well.
This forces $q$ to be equal 
either to $0$ or $1$.
Thus $J\colon B(H)\to B(H)$ is either a $*$-homomorphism or 
an anti-$*$-homomorphism.

Assume first that $J$ is a $*$-homomorphism. Recall 
that $J$ is normal. According to the description
of normal $*$-homomorphisms (see e.g. \cite[Theorem IV.5.5]{Tak}), there exist
a Hilbert space $E$ and a unitary $u\colon H
\to  H\stackrel{2}{\otimes} E$ such that
$$
J(a) = u^*(a\otimes I_E)u,\qquad 
a\in B(H).
$$
Here $H\stackrel{2}{\otimes} E$ stands for the Hilbertian tensor
product of $H$ and $E$ and we regard 
$$
B(H)\otimes B(E)\subset 
B\bigl(H\stackrel{2}{\otimes} E\bigr)
$$
in the usual way. For all $x\in S^2(H)$, we have
$T_\phi(x)=wBJ(x)$ hence $w^*T_\phi(x)=BJ(x)$. This implies that
\begin{equation}\label{u}
u(w^*T_\phi(x))u^*= uBu^*(x\otimes I_E),\qquad x\in S^2(H).
\end{equation}
Since $B$ commutes with the range of $J$, the operator
$uBu^*$ commutes with $x\otimes I_E$ for all $x\in S^2(H)$.
Consequently, $uBu^*= I_H\otimes c$ for some positive operator $c$ 
acting on $E$. Then it follows from (\ref{u}) that 
$c\in S^2(E)$ and that 
$$
w^*T_\phi(x) = u^*(x\otimes c)u,\qquad 
x\in S^2(H).
$$
Now recall that $w^*T_\phi$ has dense range. The above identity therefore implies that 
$E=\Cdb$. Thus $c\in\Cdb\setminus\{0\}$, $u$ is a unitary of $B(H)$
and $w^*T_\phi(x) = c\,u^*xu$ for all $x\in S^2(H)$.
Let $v=wu^*$. This is a unitary of $B(H)$ and we obtain that
$$
T_\phi(x) = c\, vxu,\qquad x\in S^2(H).
$$

Let $(\,\cdotp\vert\,\cdotp)$ denote the inner product on $H$.
For any $g,h\in H$, let $g\otimes\overline{h}\in B(H)$ denote the 
rank one operator taking any $\xi\in H$ to $(\xi\vert h)\, g$.
Then $v(g\otimes\overline{h})u = v(g)\otimes \overline{u^*(h)}$.

Schur multipliers commute with each other, hence for any 
$\delta\in \D(\Omega)$, we have
$$
T_\phi\bigl(\delta x\bigr) = 
\delta T_\phi(x), \qquad x\in S^2(H).
$$
Thus $v\delta xu=\delta vxu$ for all $\delta\in \D(\Omega)$
and all $x\in S^2(H)$.
Applying this with $x=g\otimes\overline{h}$ and 
using the identities $\delta(g\otimes\overline{h})=
\delta(g)\otimes\overline{h}$ and $v(g\otimes\overline{h})=
v(g)\otimes\overline{h}$, we
deduce that 
$v(\delta(g)\otimes  \overline{h})u=
\delta (v(g)\otimes\overline{h})u$ and hence
$$
v\delta(g)\otimes \overline{u^*(h)}
=\delta  v(g)\otimes \overline{u^*(h)},\qquad
g,h\in H,\ \delta\in\D(\Omega).
$$
Since $u^*\not=0$, this implies that 
$v\delta=\delta v$ for all $\delta\in\D(\Omega)$. Thus
$v$ commutes with $\D(\Omega)$. According to (\ref{4Commute}), 
this implies that $v\in\D(\Omega)$. Thus there exists a unitary
$\alpha\in L^\infty(\Omega)$ such that $v={\rm Mult}_\alpha$.
Likewise there exists a unitary
$\beta\in L^\infty(\Omega)$ such that $u={\rm Mult}_\beta$.
We therefore obtain the identity (\ref{4Factor0}), from which (ii) follows at once.

Assume now that $J$ is 
an anti $*$-homomorphism. For any
$f\in L^2(\Omega^2)$, let $\widetilde{f} \in L^2(\Omega^2)$
be defined by $\widetilde{f}(s,t)=f(t,s)$, for a.e. 
$(s,t)\in\Omega^2$. Next,  if $x=S_f$, set 
${}^t \! x = S_{\widetilde{f}}$. It is clear that
the mapping $x\mapsto {}^t \! x$
on $S^2(L^2(\Omega))$ 
extends to a normal anti $*$-homomorphism
$$
\rho\colon B(H)\longrightarrow B(H).
$$
This mapping is an analog of the
transposition map on matrices. 
Obviously, the composition map  $J\circ\rho\colon B(H)\to B(H)$ 
is a normal $*$-homomorphism. Now arguing as in the
$*$-homomorphism case, we obtain the existence
of a constant $c\in\Cdb\setminus\{0\}$ and of two unitaries 
$\alpha,\beta\in L^\infty(\Omega)$ such that 
$$
T_\phi(x)= c\,{\rm Mult}_\alpha\circ {}^t \! x\circ {\rm Mult}_\beta,
\qquad x\in S^2(L^2(\Omega)).
$$
Since $\alpha,\beta$ are unitaries, ${\rm Mult}_\alpha$
and ${\rm Mult}_\beta$ are unitaries as well
and we have ${\rm Mult}_\alpha^{-1} = {\rm Mult}_{\overline{\alpha}}$
and ${\rm Mult}_\beta^{-1} = {\rm Mult}_{\overline{\beta}}$.
Writing ${}^t \! x =  c^{-1} {\rm Mult}_{\overline{\alpha}}
\circ T_\phi(x) \circ {\rm Mult}_{\overline{\beta}}$, we therefore
deduce that $x\mapsto {}^t \! x$ is a Schur multiplier.

Let us show that this is impossible,  except if $L^2(\Omega)$ has dimension 1. 
If $x\mapsto {}^t \! x$ is a Schur multiplier,
then there exists $\phi_0\in L^\infty(\Omega^2)$ such that
\begin{equation}\label{4Trans}
\phi_0(s,t) g(s)h(t) =h(s)g(t)\qquad a.e.\hbox{-}(s,t)\in\Omega^2,
\end{equation}
for all $g,h\in L^2(\Omega)$. If $L^2(\Omega)$ has dimension $\geq 2$, 
then there exist $F_1,F_2\in\Sigma$ such that
$0<\mu(F_1)<\infty$, $0<\mu(F_2)<\infty$ and $F_1\cap F_2=\emptyset$.
The indicator functions $g=\chi_{F_1}$ and $h=\chi_{F_2}$ belong
to $L^2(\Omega)$. Applying (\ref{4Trans}) to these functions,   
we obtain that $h(s)g(t)=0$ for almost every $(s,t)\in F_2\times F_1$. 
Since $h(s)g(t)=1$ for $(s,t)\in F_2\times F_1$ and
$$
(\mu\otimes\mu)(F_2\times F_1) =\mu(F_2)\mu(F_1)>0, 
$$
we get a contradiction.

Now if we are in the trivial case when $L^2(\Omega)$ has dimension
1, then (ii) holds true.
\end{proof}

\begin{remark}\label{5Iso}
Let $\phi\in L^\infty(\Omega^2)$, let $1\leq p\not=2<\infty$ and
assume that $T_\phi$ 
is a bounded Schur multiplier on $S^p(L^2(\Omega))$. 
It follows from Theorem \ref{4Main} and \cite{Yeadon} that if $T$ 
is an isometry, then there exist two  unitaries 
$\alpha,\beta\in L^\infty(\Omega)$ such that 
$\phi(s,t)=\alpha(s)\beta(t)$ for a.e. $(s,t)\in\Omega^2$. 
It is clear that the converse is true.
For the discrete case (see the following remark), this has been proved in \cite{Arh11}.
\end{remark}

\begin{remark}\label{4Discrete}
Let $I$ be an index set and let $(e_i)_{i\in I}$ be the standard basis 
of $\ell^2_I$. Any $x\in B(\ell^2_I)$ can be represented by a 
matrix $[x_{ij}]_{i,j\in I}$ defined by 
$x_{ij}=(x(e_j)\vert e_i)$ for all $i,j\in I$. 
Of course any finitely supported matrix 
$[x_{ij}]_{i,j\in I}$ represents an element of $B(H)$
(actually a finite rank one), and we let 
$\norm{[x_{ij}]}_p$ denote the $S^p(\ell^2_I)$-norm of this element.

Let ${\mathfrak m}=\{m_{ij}\}_{(i,j)\in I^2}$ be a bounded 
family of complex numbers. If we apply the definitions
of this section to $\Omega=I$ equipped with the
counting measure, the Schur multiplier
$T_{\mathfrak m}$ is defined on finitely supported matrices by
$$
T_{\mathfrak m}\bigl( [x_{ij}]\bigr)=[m_{ij}x_{ij}].
$$
It therefore follows from
Remark \ref{5Iso} that the following are equivalent:
\begin{itemize}
\item [(i)] There exists $1\leq p\not=2<\infty$ such that
$$
\norm{[m_{ij}x_{ij}]}_p=\norm{[x_{ij}]}_p
$$
for all finitely supported matrices $[x_{ij}]_{i,j\in I}$. 

\smallskip
\item [(ii)] There exist two families $(\alpha_i)_{i\in I}$ and 
$(\beta_j)_{j\in I}$ in $\Tdb$ such that
$$
m_{ij}=\alpha_i\beta_j,\qquad\hbox{for all} \ 
(i,j)\in I^2.
$$
\end{itemize}
\end{remark}

We conclude with a characterisation of a particular class of Schur multipliers, the separating Herz-Schur multipliers.
Let $G$ be a locally compact $\sigma$-compact group (see Lemma
\ref{lem-secound-countable-sigma-compact}).
Suppose $1 \leq p < \infty$.
Let $\varphi \in L^\infty(G)$ and define $\phi \in L^\infty(G^2)$ by 
$\phi(s,t) = \varphi(s^{-1}t)$.
The Schur multiplier $T_\varphi^{HS} := T_\phi$ is called 
a Herz-Schur multiplier (with symbol $\varphi$).

In \cite[Proposition 4.5]{CrN13}, it is shown that a Herz-Schur 
multiplier $T_\varphi^{HS}\colon B(L^2(G)) \to B(L^2(G))$ with 
positive definite $\varphi$ such that $\varphi(e) = 1$, 
is a conjugation with a unitary if and only if 
$\varphi$ is a character.

\begin{corollary}
Let $1 \leq p < \infty$.
Let $G$ be a locally compact $\sigma$-compact group.
Let $T_\varphi^{HS}$ be a bounded Herz-Schur multiplier on $S^p(L^2(G))$.
Then $T_\varphi^{HS}$ is separating if and only if there exists a continuous character $\psi \colon G \to \Tdb$ and $c \in \Cdb$ such that $\varphi = c \psi$ almost everywhere.
\end{corollary}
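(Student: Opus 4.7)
The plan is to reduce the statement to Theorem \ref{4Main} applied to the Schur multiplier with symbol $\phi(s,t):=\varphi(s^{-1}t)$, and then translate the factorization $\phi(s,t)=c\alpha(s)\beta(t)$ into the statement that $\varphi$ is a scalar multiple of a continuous character. Since $G$ is $\sigma$-compact, Lemma \ref{lem-secound-countable-sigma-compact} ensures that the Haar measure is $\sigma$-finite, so Theorem \ref{4Main} indeed applies to $\Omega=G$.

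For the easy direction, suppose $\varphi=c\psi$ a.e.\ for a continuous character $\psi$. Then $\varphi(s^{-1}t)=c\psi(s^{-1})\psi(t)=c\overline{\psi(s)}\psi(t)$ a.e., which is of the form $c\alpha(s)\beta(t)$ with $\alpha=\overline{\psi}$ and $\beta=\psi$ both unitaries in $L^\infty(G)$. By the implication $(ii)\Rightarrow(i)$ of Theorem \ref{4Main}, $T_\varphi^{HS}$ is separating.

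For the forward direction, assume $T_\varphi^{HS}$ is separating. By Theorem \ref{4Main}, there exist $c\in\Cdb$ and unitaries $\alpha,\beta\in L^\infty(G)$ with
\[
\varphi(s^{-1}t)=c\,\alpha(s)\beta(t)\qquad\text{a.e.\ on } G^2.
\]
If $c=0$ then $\varphi=0$ a.e.\ and any character works. Otherwise, the change of variables $(s,t)\mapsto(s,u)$ with $u=s^{-1}t$, which preserves the product Haar measure by left-invariance, yields
\[
\varphi(u)=c\,\alpha(s)\beta(su)\qquad\text{a.e.\ on }G^2.
\]
In particular $|\varphi|=|c|$ a.e.\ on $G$, so $\varphi$ is nonzero a.e. By Fubini, pick $u_0$ in a conull set such that $\beta(su_0)=c^{-1}\alpha(s)^{-1}\varphi(u_0)$ for a.e.\ $s$; the substitution $t=su_0$ gives the explicit formula $\beta(t)=c^{-1}\alpha(tu_0^{-1})^{-1}\varphi(u_0)$ for a.e.\ $t$. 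Feeding this back into the displayed identity and simplifying, one obtains the cocycle-type relation
\[
\alpha(sv)=\alpha(s)\eta(v)\qquad\text{a.e.\ on }G^2,\qquad \eta(v):=\frac{\varphi(u_0)}{\varphi(vu_0)}.
\]

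The final step is to conclude that $\eta$ agrees a.e.\ with a continuous character. Applying the cocycle relation twice, i.e.\ computing $\alpha(sv_1v_2)$ as $\alpha(sv_1)\eta(v_2)=\alpha(s)\eta(v_1)\eta(v_2)$ on the one hand and $\alpha(s)\eta(v_1v_2)$ on the other, and using that $\alpha$ has modulus one, one gets $\eta(v_1v_2)=\eta(v_1)\eta(v_2)$ for a.e.\ $(v_1,v_2)\in G^2$. A classical theorem of A.\ Weil asserts that any $\Tdb$-valued measurable function on a locally compact group satisfying the homomorphism equation almost everywhere coincides almost everywhere with a continuous character $\chi$. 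Substituting back, $\varphi(vu_0)=\varphi(u_0)\chi(v)^{-1}$ a.e., and after the change of variables $u=vu_0$ and using that $\chi$ is a homomorphism, we obtain $\varphi(u)=\bigl(\varphi(u_0)\chi(u_0)\bigr)\overline{\chi(u)}$ a.e. Setting $c':=\varphi(u_0)\chi(u_0)$ and $\psi:=\overline{\chi}$ (again a continuous character) gives $\varphi=c'\psi$ a.e., as desired. The main technical hurdle is the invocation of Weil's result on measurable characters; everything else is bookkeeping of Fubini sections and a substitution in the functional equation supplied by Theorem \ref{4Main}.
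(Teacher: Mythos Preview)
Your proof is correct and follows the same overall strategy as the paper: both directions are reduced to Theorem \ref{4Main}, and then the factorization $\varphi(s^{-1}t)=c\,\alpha(s)\beta(t)$ is manipulated to exhibit $\varphi$ as a multiple of a character.

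The extraction of the character is carried out differently, however. The paper exploits the left-invariance of the Herz--Schur symbol directly: writing $\varphi(s^{-1}t)=\varphi((rs)^{-1}(rt))$ immediately gives $\beta(rt)/\beta(t)=\alpha(s)/\alpha(rs)$, from which a function $\psi(r)$ is defined \emph{for every} $r\in G$ and shown to be a genuine homomorphism; continuity then follows from the standard fact \cite[Corollary 22.19]{HR} that measurable homomorphisms are continuous. Your route instead performs the change of variables $u=s^{-1}t$, expresses $\beta$ in terms of $\alpha$ via a Fubini section, and derives a cocycle relation $\alpha(sv)=\alpha(s)\eta(v)$ almost everywhere; you then need the slightly stronger ``almost-everywhere homomorphism'' theorem (which you attribute to Weil) to upgrade $\eta$ to a continuous character. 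Both approaches are valid; the paper's is marginally more direct and relies on a more readily citable result, while yours makes the dependence on Fubini sections explicit.
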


\begin{proof}
If $\varphi(s) = c \psi(s)$ a.e. $s \in G$ for some continuous character $\psi$, then the symbol $\phi$ of the Schur multiplier satisfies $\phi(s,t) = \varphi(s^{-1}t) = c \psi(s^{-1}t) = c \psi^{-1}(s) \psi(t)$ for a.e. $(s,t) \in G^2$.
Since $\psi^{-1}$ and $\psi$ are clearly unitaries of $L^\infty(G)$,
by Theorem \ref{4Main}, $T_\varphi^{HS} = T_\phi$ is separating.

Conversely, assume $T_\varphi^{HS}$ separating.
Then by Theorem \ref{4Main}, there are unitaries $\alpha,\beta \in L^\infty(G)$ and some $c \in \Cdb$ such that $\varphi(s^{-1}t) = c \alpha(s) \beta(t)$ for a.e. $(s,t) \in G^2$.
Let $r \in G$.
Then $c \alpha(s) \beta(t) = \varphi(s^{-1}t) = \varphi((rs)^{-1}(rt)) = c \alpha(rs)\beta(rt)$ for a.e. $(s,t) \in G^2$.
Leaving the trivial case $c = 0$ aside, we deduce that
\begin{equation}
\label{equ-1-Herz-Schur}
\frac{\beta(rt)}{\beta(t)} = \frac{\alpha(s)}{\alpha(rs)}
\end{equation}
for a.e. $(s,t) \in G^2$.
Thus there is some $s \in G$ such that \eqref{equ-1-Herz-Schur} holds for a.e. $t \in G$.
Defining $\psi(r)$ as the right hand side of \eqref{equ-1-Herz-Schur}, we then obtain $\frac{\beta(rt)}{\beta(t)} = \psi(r)$ for a.e. $t \in G$.
The function $\psi \colon G \to \Cdb$ with this property is necessarily unique.
From $\psi(e) = 1$ and
\[\psi(r_1r_2) = \frac{\beta(r_1r_2t)}{\beta(t)} = \frac{\beta(r_1(r_2t))}{\beta(r_2t)} \frac{\beta(r_2t)}{\beta(t)} = \psi(r_1) \psi(r_2) \quad (\text{a.e.}\: t \in G)\]
we infer that $\psi$ is a character.
Since $\psi$ is measurable, by \cite[Corollary 22.19 p.~346]{HR}, it is automatically continuous.
From $\beta(rt) = \beta(t) \psi(r)$ for a.e. $t \in G$, we infer by a Fubini argument that there exists some $t \in G$ such that this equality holds for a.e. $r \in G$.
Thus, $\beta$ coincides a.e. with a continuous function.
Choosing this continuous representative for $\beta$, we obtain that for every $r$ in $G$, $\beta(rt)=\beta(t)\psi(r)$ for a.e. $t\in G$. 
Since $\beta$ is continuous, this implies
$\beta(rt)=\beta(t)\psi(r)$ for all $r,t$ in $G$. In particular, we have that $\beta(r)=\beta(e)\psi(r)$ for all $r\in G$.
%\[\frac{\beta(re)}{\beta(e)} = \lim_{V \to \{e\}} \frac{1}{\mu(V)} \int_V \frac{\beta(rt)}{\beta(t)} dt = \lim_{V \to \{e\}} \frac{1}{\mu(V)} \int_V \psi(r) dt = \psi(r) \quad (r \in G),\]
%where the limits are for neighborhoods $V$ of $e$ directed by inclusion.
%On the other hand, from \eqref{equ-1-Herz-Schur}, we deduce that there exists some $\tilde{\psi} \colon G \to \Cdb$ such that $\frac{\alpha(rs)}{\alpha(s)} = \tilde{\psi}(r)$ for a.e. $s \in G$.
Using \eqref{equ-1-Herz-Schur}, the same argument as above shows that $\alpha(s)=\alpha(e)\psi(s^{-1})$ for all $s$ in $G$.
%$\tilde{\psi}$ is a continuous character, $\alpha$ coincides a.e. with a continuous function and if we choose the continuous representative for $\alpha$, we get $\alpha = \alpha(e) \tilde{\psi}$.
%Moreover, necessarily $\tilde{\psi}= \psi^{-1}$.
Hence we deduce that $\varphi(s^{-1}t) = c \alpha(e) \beta(e) \psi(s^{-1}t)$ for a.e. $(s,t) \in G^2$. Therefore, $\varphi$ coincides a.e. with a multiple of a continuous character.
\end{proof}

\bigskip\noindent
{\bf Acknowledgements:}
C\'edric Arhancet and Christoph Kriegler 
were supported by the grant ANR-18-CE40-0021 of the French National 
Research Agency ANR (project HASCON).
Christoph Kriegler was supported by the grant ANR-17-CE40-0021 (project Front).
Christian Le Merdy was supported by the ANR project Noncommutative analysis on groups
and quantum groups (No./ANR-19-CE40-0002). 
Safoura Zadeh was supported by I-SITE Emergence project 
MultiStructure (Harmonic Analysis of Fourier and Schur multipliers) of Clermont Auvergne University.

\vskip 1cm

\bibliographystyle{abbrv}

\end{document}